\documentclass[12pt,twoside]{amsart}
\usepackage{amssymb}
\usepackage{a4wide}
\usepackage[latin1]{inputenc}
\usepackage[T1]{fontenc}

\newtheorem{thm}{Theorem}[section]
\numberwithin{equation}{section}
\numberwithin{figure}{section} 
\theoremstyle{plain}
\theoremstyle{plain}    
\newtheorem{cor}[thm]{Corollary} 
\theoremstyle{plain}    
\newtheorem{prop}[thm]{Proposition} 
\newtheorem{defi}[thm]{Definition}
\newtheorem{lem}[thm]{Lemma} 
\newtheorem{rem}[thm]{Remark}
\newtheorem{ex}[thm]{Example}

\newcommand{\N}{\mathbb{N}}
\newcommand{\R}{\mathbb{R}}

\newcommand{\Cc}{\mathcal{C}}
\newcommand{\f}{\varphi}
\newcommand{\p}{\psi}
\newcommand{\vep}{\varepsilon}
\newcommand{\Ec}{\mathcal{E}}
\newcommand{\EcX}{\mathcal{E}(X,\omega)}

\newcommand{\ind}{{\bf 1}}

\newcommand{\setdef}{\ \big\vert \ }
\newcommand{\Capa}{{\rm Cap}}

\newcommand{\Capis}{{\rm Cap}_{\psi}}

\newcommand{\MA}{\mathrm{MA}\,}
\newcommand{\vol}{{\rm Vol}}
\newcommand{\ric}{{\rm Ric}}
\newcommand{\tr}{{\rm tr}}
\newcommand{\psh}{{\rm PSH}}

\newcommand{\Ric}{{\rm Ric}}

\pagestyle{headings} 
\setcounter{tocdepth}{1}

\usepackage{hyperref}
\usepackage{xcolor}
\usepackage{comment}
\usepackage{dsfont}
\usepackage{yhmath}
\definecolor{violet}{rgb}{0.0,0.2,0.7}
\definecolor{rouge}{cmyk}{0.0,0.6,0.4,0.3}
\definecolor{rouge2}{rgb}{0.8,0.0,0.2}
\hypersetup{
    bookmarks=true,         
    unicode=false,          
    pdftoolbar=true,        
    pdfmenubar=true,        
    pdffitwindow=false,     
    pdfstartview={FitH},    
    pdftitle={},    
    pdfauthor={},     
    colorlinks=true,       
   linkcolor=blue,          
    citecolor=blue,        
    filecolor=black,      
    urlcolor=cyan}           

\begin{document}
\title[the weak K\"ahler-Ricci flow]{Uniqueness and short time regularity of the weak K\"ahler-Ricci flow}
\date{\today \\
The authors are partially supported by the French ANR project MACK. The second-named author is supported by the European Research Councils}
\author[E. Di Nezza]{Eleonora Di Nezza} 

\address{Department of Mathematics, Imperial College London London, SW7 2AZ, United Kingdom}

\email{e.di-nezza@imperial.ac.uk}

\author[H.C. Lu]{Chinh H. Lu}
\address{Mathematical Sciences, Chalmers University of Technology, 
412 96 Gothenburg, Sweden}

\email{chinh@chalmers.se}

\begin{abstract}
Let $X$ be a compact K\"ahler manifold. We prove  that   the K\"ahler-Ricci flow starting from arbitrary closed positive $(1,1)$-currents  is smooth  outside some analytic subset. This regularity result is optimal meaning that the flow has  positive Lelong numbers for short time if the initial current does. We also prove that the flow is unique when starting from currents with zero Lelong numbers.
\end{abstract}

\maketitle
\tableofcontents

\section{Introduction}\label{sec:intro}
Over the last few decades, the Ricci flow introduced by Hamilton in \cite{Ham82}
$$
\frac{dg}{dt} = -2 \Ric (g)
$$
has found spectacular applications in Riemannian geometry \cite{Per1}, \cite{Per2}, \cite{Per3}. As observed by Bando, starting from a K\"ahler metric, the K\"ahler property is preserved by the Ricci flow and the resulting flow is called the K\"ahler-Ricci flow. Its long time existence is now well known thanks to the work of Cao \cite{Cao85}, Tsuji \cite{Tsu} and Tian-Zhang \cite{TZha06}. Convergence of the flow at infinity is also of great interest as shown in \cite{Cao85}, as this effect can be used to  find K\"ahler-Einstein metrics if they exist. Since then, the K\"ahler-Ricci flow, as well as its twisted version, has been studied intensively.
\medskip

Of particular interest is the attempt to run the K\"ahler-Ricci flow from singular data (see \cite{CD07}, \cite{CTZ11}, \cite{ST09}, \cite{SzTo}, \cite{GZ13}), and it can be considered as an alternative way to regularize currents. Another motivation for studying the weak K\"ahler-Ricci flow comes from an analytic analogue of Mori's Minimal Model Program. It is conjectured (see \cite{ST09}, \cite{SW13b}, \cite{SW14}) that  the minimal model of a projective algebraic variety can be obtained as the eventual limit of the K\"ahler-Ricci flow after passing through singularities. At each singular time, the flow develops singularities along some analytic subset of $X$, and so to continue through these singularities it is necessary to restart the flow from singular objects.  

In \cite{ST09}, Song-Tian succeeded in starting the flow from continuous initial data. A remarkable progress has been recently made  by Guedj and Zeriahi \cite{GZ13} which allows to define  a unique  maximal flow starting from any closed positive $(1,1)$-current. Moreover, as shown in \cite{GZ13}, the flow slowly smooths out the initial current and the speed of the regularizing effect  depends on the value of  the Lelong number of the initial current.
\medskip 

\noindent{\bf The scalar parabolic Monge-Amp\`ere equation.} 
Before going further and stating the main results of the paper, let us fix some notations.  Let $X$ be a compact K\"ahler manifold of dimension $n$ and $\alpha_0\in H^{1,1}(X,\R)$ a K\"ahler class. Fix $\eta$  a smooth closed $(1,1)$-form on $X$. The $\eta$-twisted K\"ahler-Ricci flow, introduced in \cite{CSz12}, is the following:
\begin{equation}\label{eq: KRflow}
\frac{\partial \omega_t}{\partial t} = -\ric(\omega_t) +\eta \ , \ \omega_t\vert_{t=0}= T_0 ,
\end{equation}
where $T_0$ is a fixed closed positive $(1,1)$-current in $\alpha_0$. When $T_0$ is a K\"ahler form, as mentioned above, the flow admits a unique smooth solution on a maximal interval $[0, T_{\max})$, where 
$$
T_{\max}:=\sup\{t\geq 0 \, |\, tK_X+t\{\eta\} +\alpha_0 \;\;\mbox{is nef} \}.
$$

It is standard (and more convenient) to rewrite the flow as a scalar parabolic Monge-Amp\`ere equation.  Fix $\omega$ a  K\"ahler form in $\alpha_0$. Let $\f_0$ be a global potential of $T_0$, i.e. $T_0:= \omega +dd^c \f_0$. Set 
$$
\chi:=\eta-\ric(\omega),\ \theta_t:=\omega +t\chi,
$$ 
and consider the following equation
\begin{equation}\label{eq: parabolic}
\frac{\partial \f_{t}}{\partial t} = \log \left[\frac{(\theta_t+dd^c \f_{t})^n}{\omega^n}\right], \ \f_t \to \f_0\ {\rm as}\ t\to 0.
\end{equation}
If $\f_t$ solves (\ref{eq: parabolic}) then a straightforward computation shows  that $\omega_t:=\theta_t +dd^c \f_t$ solves the flow (\ref{eq: KRflow}). 
Conversely, if $\omega_t$ solves the flow (\ref{eq: KRflow}) then it follows from the  $dd^c$-lemma  that we can write
$$
\omega_t= \theta_t + dd^c \varphi_t, 
$$
where  $\varphi_t$ solves the parabolic Monge-Amp\`ere equation (\ref{eq: parabolic}). 
\medskip

\noindent{\bf The maximal K\"ahler-Ricci flow.}
Fix $T_0=\omega+dd^c \f_0$ a closed positive $(1,1)$ current in the class $\alpha_0$. The integrability index of $T_0$ (or $\f_0$) is defined by
$$
c(T_0)=c(\f_0):=\sup\left\{\lambda>0\setdef e^{-2\lambda \f_0}\in L^1(X)\right\}.
$$
Assume that $1/2c(T_0) < T_{\max}$.  Let $\f_{0,j}$ be a sequence of smooth $\omega_0$-psh functions decreasing to $\f_0$. Such sequences exist thanks to \cite{Dem94} (see also \cite{BK07}). Let $\f_{t,j}$ be the unique solution of the parabolic equation (\ref{eq: parabolic}) with initial data $\f_{0,j}$. As shown by Guedj and Zeriahi in  \cite{GZ13}, as $j\to +\infty$ the sequence $\f_{t,j}$ decreases to $\f_t$ which satisfies the following: 
\begin{itemize}
\item For each $t>0$, $\f_t$ is a $\theta_t$-psh function. {\it Moreover, if $t> 1/2c(T_0)$, $\f_t$ is smooth on $X$ and solves (\ref{eq: parabolic}) in the classical sense.} 
\item  $\f_t$ converges in capacity to $\f_0$ as $t\to 0$.  
\end{itemize}

\noindent {\bf Remark.}
The assumption $1/2c(\f_0)<T_{\max}$ is necessary to insure that the maximal scalar solution $\f_t$ is well-defined. Without this condition  it can happen that the sequence $\f_{t,j}$ decreases to $-\infty$ (see example \ref{CondTmax}). 

\medskip

The flow constructed by approximation as above was also shown to be maximal meaning that it dominates any ``weak" solution of (\ref{eq: parabolic}). 
The regularity of this  flow was obtained for $t$ not too small  and nothing was known when $t<1/2c(T_0)$. Example 6.4 in \cite{GZ13} suggests that there might be no  regularity at all due to the presence of positive Lelong numbers. However, as in Demailly's regularization theorem \cite{Dem14}, one can expect that the regularizing effect happens outside some analytic subset. Our first result shows that it is indeed the case. 

\medskip

\noindent{\bf Theorem A.}
{\it  Assume  that 
$1/2c(T_0)<T_{\max}$. Then the maximal K\"ahler-Ricci flow starting from $T_0$ is smooth in a Zariski open  subset of $X$.} 
\medskip

More precisely, the Zariski open subset  in Theorem A is described by the complement of 
Lelong superlevel sets of $T_0$. For each $s>0$ we define 
$$
D_{s}:= \{x\in X\setdef \nu(T_0,x) \geq s\}. 
$$
The precise statement of Theorem A says that for each $\vep>0$ there exists an analytic subset $Y_{\vep}:=D_{\kappa(\vep)}$ such that $\omega_t$ is smooth in $X\setminus Y_{\vep}$ for every $t>\vep$. Here 
the constants $\kappa(\vep)$ depends only on $\vep$ and $T_0$ in such a way that it decreases to $0$ as $\vep$ goes to $0$. These constants $\kappa(\vep)$ are approximately $\vep$, up to an error term in Demailly's regularizing process. An interesting particular case is when $\omega$ represents the first Chern class of an ample holomorphic line bundle over $X$. In this case the error term is zero and $\kappa(\vep)$ can be taken to be $\vep$.  Siu's theorem \cite{Siu74} guarantees that $Y_{\vep}$ are analytic subsets of $X$. 

We also show that our result in Theorem A is optimal in the sense that 
any maximal K\"ahler-Ricci flow starting from currents having positive Lelong numbers has positive Lelong numbers in short time (see Theorem \ref{thm: singularity}).

Our previous analysis gives rise to the following definition. 
\medskip

\noindent {\bf Definition.} {\it  
A function $\f:(0,T_{\max}) \times X \to \R$ is called a weak solution of the equation (\ref{eq: parabolic}) starting from $\f_0$ if the following conditions are satisfied:
\begin{itemize}
\item[(i)] For each $t>0$, the function $\f_t$ is $\theta_t$-psh on $X$,
\item[(ii)] The function $t\mapsto \f_t$
is continuous as a map from $[0,T_{\max})$ to $L^1(X)$.
\item[(iii)]  For each $\vep>0$ there exists an analytic subset $D_{\vep}\subset X$ such that the function 
$$
(t,x)\mapsto \f_{t}(x)
$$
is smooth on $[\vep,T_{\max})\times (X\setminus D_{\vep})$ where the equation (\ref{eq: parabolic}) is satisfied in the classical sense. Moreover $D_{\vep}=\emptyset$ if $\vep>1/(2c(\f_0))$.
\end{itemize}
A family $(\omega_t)$ of closed positive $(1,1)$ currents is called a weak K\"ahler Ricci flow (\ref{eq: KRflow}) starting from $T_0=\omega+dd^c \f_0$ if $\omega_t= \theta_t +dd^c \f_t$ and $\f_t$ is a weak solution of the equation (\ref{eq: parabolic}).}

\medskip

The second goal of this paper is to study the stability and the uniqueness of the weak K\"ahler-Ricci flow. As mentioned above, the flow constructed in \cite{GZ13} is maximal among weak solutions of the parabolic equation (\ref{eq: parabolic}).
But it is not clear whether this maximal solution is the unique weak K\"ahler-Ricci flow.
When the inital current has zero Lelong numbers, we prove  that the uniqueness holds: 

\medskip

\noindent{\bf Theorem B.} 
{\it Assume that $c(T_0)=+\infty$. Then the following holds: 
\begin{itemize}
\item{\bf Uniqueness:} Any weak K\"ahler-Ricci flow starting from $T_0$  is maximal. In other words the flow is unique.
\item{\bf Stability:} Assume that $T_{0,j}$ is a sequence of positive closed $(1,1)$-currents converging to $T_0$ in the $L^1$ topology. Then  the corresponding K\"ahler Ricci flow $\omega_{t,j}$ converges to  $\omega_t$  in the following sense: for each $t\in(0,T_{\max})$, $\omega_{t,j}$ converges in $\Cc^{\infty}(X)$ to $\omega_t$ as $j\to +\infty$.
\end{itemize}
}
\medskip

The case when $T_0$ has positive Lelong number is more delicate as  the maximal flow has unbounded potential in short time. The usual method using the classical maximum principle and the pluripotential comparison principle, which are the main tools in studying parabolic equations, breaks down. However, we also have a partial result:
\medskip

\noindent{\bf Theorem B'.} {\it Assume that $c(T_0)$ is finite and $1/(2c(T_0))<T_{\max}$. Then any weak flow having the same singularities as the maximal flow is also maximal.}
\medskip

Here, we say that two flows $\omega_t, \Omega_t$ have the same singularity if  their potentials $u_t, v_t$ satisfy 
$$
u_t-C\leq v_t \leq u_t+C, \ \forall t
$$
for some constant $C$. 

\medskip

When the initial current $T_0$ has some local regularity properties it is natural to ask for  stronger convergence of the flow when the time goes to zero. If $T_0$ has continuous potential it was shown in \cite{ST09} that the solution  $\f_t$ converges uniformly to $\f_0$. When $T_0$ has finite energy the convergence is in energy as shown in \cite{GZ13}. In the last section we prove the following:

\medskip

\noindent{\bf Theorem C.} {\it Let $\f_0\in \psh(X,\omega)$ be such that $1/2c(\f_0)<T_{\max}$ and $\f_t$ be the maximal solution of (\ref{eq: parabolic}) starting from $\f_0$. 
\begin{itemize}
\item If $\f_0$ is continuous in an open subset $U\subset X$ then $\f_t$ converges locally uniformly to $\f_0$ in $U$. 
\item If $e^{\gamma\f_0}\in \Cc^{\infty}(X)$ for some positive constant $\gamma$ and $\f_0$ is strictly $\omega$-plurisubharmonic on  $X$, then $\f_t$ converges in $\Cc^{\infty}_{\rm loc}(\Omega)$ to $\f_0$, where $\Omega=\{\f_0>-\infty\}$. 
\end{itemize}
}

The first statement generalizes the uniform convergence of Song-Tian \cite{ST09}. The second one covers interesting particular cases of  currents having analytic singularities.

\medskip

The organization of the paper is as follows. In section \ref{sec: Preliminaries} we recall basic pluripotential theory that will be needed later on. In Section \ref{sec: Estimates} we establish various  a priori estimates. The proof of Theorem A  will be given in Section \ref{sec: Regularity} while the proof of Theorem B and Theorem B' will be given in Section 
\ref{sec: Stability}.  In Section \ref{sect: convergence at zero} we investigate fine  convergence of the flow at time $t=0$ when more regularity on the initial current is assumed, proving Theorem C (see Theorem \ref{thm: uniform convergence at zero} and Theorem \ref{thm: cvg zero exp}).  

\medskip

\medskip

\noindent{\bf Acknowledgement.} We are very grateful to Vincent Guedj and Ahmed Zeriahi for useful discussions and suggestions. We also thank Bo Berndtsson for many stimulating discussions during lunch time.

\section{Preliminaries}
\label{sec: Preliminaries}
In this section we recall classical results in pluripotential theory that will be needed in the sequel. We also review basic techniques in parabolic complex Monge-Amp\`ere equations.

\subsection{Recap on pluripotential theory}

Let $(X,\omega)$ be a compact K\"ahler manifold of complex dimension $n$. The set $\psh(X,\omega)$ contains all 
$\omega$-psh functions on $X$ ($\omega$-psh for short). A function $u$ is called $\omega$-psh on $X$ if it is upper semicontinuous and integrable on $X$, and $\omega+dd^c u\geq 0$ in the weak sense of currents. For a bounded $\omega$-psh function $u$, it follows from \cite{BT82} that the Monge-Amp\`ere operator $(\omega+dd^c u)^n$ is well-defined as a non-negative Borel measure on $X$. If $u\in \psh(X,\omega)$ is unbounded, we can still define the non-pluripolar Monge-Amp\`ere measure of $u$. It follows from the local property in the plurifine topology of the Monge-Amp\`ere operator that the following sequence
$$
\ind_{\{u>-j\}} (\omega +dd^c \max(u,-j))^n 
$$ 
is non-decreasing in $j$. Its limit as $j\to +\infty$ is called the non-pluripolar Monge-Amp\`ere measure of $u$, denoted by
$$
\MA(u)=(\omega +dd^c u)^n := \lim_{j\to+\infty} (\omega+dd^c \max(u,-j))^n. 
$$ 
As shown in \cite{GZ07}, $\MA(u)$ vanishes on pluripolar sets. Its total mass takes values in $[0,\int_X \omega^n]$. As defined in \cite{GZ07}, $u$ belongs to $\EcX$ if the total mass of its non-pluripolar Monge-Amp\`ere measure is maximal, i.e. $\int_X \MA(u)=\int_X \omega^n$. As shown in \cite{GZ07}, functions in $\EcX$ have mild singularities: if $u\in \EcX$, it has zero Lelong number at any point in $X$. This together with Skoda's theorem \cite{Sko} yield 
$$
\int_X e^{-A u} dV <+\infty ,  \ \forall A >0. 
$$ 
Observe that the integrability index $c(u)=+\infty$ if and only if $\nu(u,x)=0$ for any $x\in X$.

\subsection{The generalized Monge-Amp\`ere capacity}
In \cite{DL14a, DL14b} we introduce and study the so called generalized Monge-Amp\`ere capacity defined as
$$\Capis (E):=\sup\left\{\int_{E} \MA(u) \,|\, u\in \psh(X,\omega),\;\; \psi-1\leq u\leq \psi \right\}\qquad E\subset X$$
where $\psi$ is a possibly singular $\delta\omega$-psh function, $\delta\in(0,1)$, and $\MA(u)$ denotes the non-pluripolar Monge-Amp\`ere measure of $u$. Observe that if $\psi\equiv C$, then we recover the classical Monge-Amp\`ere capacity introduced in \cite{BT82}, \cite{Kol03}, \cite{GZ05}.
Such generalized capacities are the key ingredient when studying singularities of solutions of degenerate complex Monge-Amp\`ere equations and they enjoy several nice properties. Among those, it can be proven that when $\psi\in \Ec(X,\omega)$ then the $\psi$-capacity characterizes pluripolar sets, namely 
$$
\Capis^*(E)=0\Longleftrightarrow \ \text{E is a pluripolar subset of}\ X,
$$ 
where $\Capis^*$ is the outer $\psi$-capacity defined for any subset $E\subset X$ by
\begin{equation*}
\Capis^*(E):=\inf \left\{ \Capis(U) \setdef U\ \text{is an open subset of } X,\  E\subset U\right\}.
\end{equation*}
We refer the reader to \cite{DL14b} for more details.\\

\noindent Here below we recall a classical lemma (which goes back to \cite{Kol98}) that it is used in the proof of Theorem \ref{DL C0 estimate}.
\begin{lem}\label{lem: Kol tech}
Let $g:\R^+\rightarrow \R^+$ be a non-increasing, right continuous function such that $g(+\infty)=0$. Assume that there exists $C>0$ such that $g$ satisfies
\begin{equation}\label{ineq: Kol key}
s \,g(t+s)\leq C\, g(t)^2\qquad \quad \forall s\in [0,1], \, t>0.
\end{equation}
Let $t_0 >0$ be such that $g(t_0)\leq (2C)^{-1}$. Then we have 
$$
g(t)=0, \ \ \forall t\geq 2+t_0.
$$
\end{lem}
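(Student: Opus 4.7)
The plan is to set up the classical Kolodziej-style geometric iteration: I will construct an increasing sequence $(t_k)_{k\ge 0}$ such that $g(t_k)$ decays geometrically fast while the increments $t_{k+1}-t_k$ form a summable geometric series bounded by $2$. The starting point is the observation that the hypothesis $g(t_0)\le (2C)^{-1}$ is precisely the threshold that makes the choice of step $s:=2Cg(t)$ admissible (i.e.\ $s\in[0,1]$), and such a choice is the one that balances the two sides of \eqref{ineq: Kol key}.

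Concretely, I would set $g_k:=g(t_k)$ and define recursively $s_k:=2Cg_k$ and $t_{k+1}:=t_k+s_k$. Plugging $t=t_k$, $s=s_k$ into \eqref{ineq: Kol key} gives $s_k\,g(t_k+s_k)\le Cg_k^2=s_kg_k/2$, and assuming $s_k\le 1$ this yields $g_{k+1}\le g_k/2$. A straightforward induction then produces
\[
g_k\;\le\;\frac{g(t_0)}{2^k}\;\le\;\frac{1}{2^{k+1}C}, \qquad s_k\;\le\;\frac{1}{2^k}\;\le\; 1,
\]
so the recursion is self-sustaining and the total increment is controlled by
\[
\sum_{k\ge 0} s_k\;\le\;\sum_{k\ge 0}2^{-k}\;=\;2.
\]

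Consequently $(t_k)$ is increasing and bounded, hence converges to some $t_\infty\le t_0+2$. Since $g$ is non-increasing and $g(t_k)\to 0$, monotonicity forces $g(t_\infty)\le \inf_k g(t_k)=0$, and therefore $g(t)=0$ for every $t\ge t_\infty$. In particular $g$ vanishes on $[t_0+2,+\infty)$, which is exactly the desired conclusion.

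No step looks like a genuine obstacle: the whole argument hinges on the single idea of choosing $s=2Cg(t)$, which is the extremal admissible step size. The only delicate point to verify is that this step size stays in $[0,1]$ at every stage, and this is precisely propagated by the geometric decay starting from the smallness hypothesis $g(t_0)\le (2C)^{-1}$. Right-continuity of $g$ is not actually needed in this argument; the non-increasing property alone suffices for the concluding step.
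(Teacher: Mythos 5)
Your argument is correct and is essentially the classical iteration of Ko{\l}odziej (also in \cite{EGZ09}) that the paper simply cites instead of reproducing: choose the extremal step $s_k=2Cg(t_k)$, get the geometric decay $g(t_{k+1})\leq g(t_k)/2$, and observe that the total increment is at most $\sum_k 2^{-k}=2$ (the degenerate case $s_k=0$ being trivial since then $g$ already vanishes beyond $t_k$). Your remark that right-continuity is not needed for this version of the argument is also accurate; it plays a role only in variants where the iteration points are defined via suprema.
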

\noindent We refer the reader to \cite{Kol98} (or \cite{EGZ09}) for a proof of the above lemma.\subsection{The comparison principle}
One of the main tools in studying the K\"ahler-Ricci flow is the comparison principle. The following version is classical:

\begin{prop}
\label{prop: clas_comp_prin}
Let $u, v:\ [0,T_{\max})\times X\rightarrow \R$ be smooth functions such that $u_t,v_t$ are strictly $\theta_t$-psh functions and
$$
\dot{u_t}\leq \log \left[\frac{(\theta_t+dd^c u_t)^n}{\omega^n} \right]\quad\mbox{and}\quad \dot{v_t}\geq \log \left[\frac{(\theta_t+dd^c v_t)^n}{\omega^n} \right].
$$
Then $u_t - v_t \leq \sup_X (u_0-v_0)$. In particular if $\f_t,\psi_t$ are solutions of ($\ref{eq: parabolic}$) and $\f_0\geq \psi_0$ then $\f_t\geq \psi_t$ for all $t>0$.
\end{prop}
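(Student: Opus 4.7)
The plan is a textbook parabolic maximum principle adapted to the complex Monge-Amp\`ere operator. Fix $T\in(0,T_{\max})$ and $\vep>0$, and introduce the auxiliary function
$$
w(t,x) := u(t,x) - v(t,x) - \sup_X(u_0 - v_0) - \vep t
$$
on the compact set $[0,T]\times X$. I claim that $w\leq 0$ there; sending $\vep\to 0$ and then $T\nearrow T_{\max}$ yields the stated inequality, and specializing to $u=\p_t$, $v=\f_t$ (both solving \eqref{eq: parabolic}) gives the second assertion, because the hypothesis $\sup_X(\p_0-\f_0)\leq 0$ would then force $\p_t\leq \f_t$.

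By continuity and compactness, $w$ attains its maximum at some $(t_0,x_0)\in [0,T]\times X$. On the initial slice $t=0$ we have $w_0\leq 0$ by the definition of the supremum, so if $t_0=0$ the claim is already proved. Otherwise $t_0>0$; assuming for contradiction that $w(t_0,x_0)>0$, the standard first- and second-order necessary conditions at the maximum yield
$$
\partial_t w(t_0,x_0)\geq 0 \quad \text{and} \quad dd^c_x w(t_0,\cdot)\big\vert_{x_0}\leq 0,
$$
the former being one-sided and valid even at the endpoint $t_0=T$. The spatial inequality gives $\theta_{t_0}+dd^c u_{t_0}\leq \theta_{t_0}+dd^c v_{t_0}$ as Hermitian $(1,1)$-forms at $x_0$, and since both $u_{t_0}$ and $v_{t_0}$ are strictly $\theta_{t_0}$-psh, both sides are strictly positive definite. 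Monotonicity of the determinant on positive Hermitian matrices then gives $(\theta_{t_0}+dd^c u_{t_0})^n(x_0)\leq (\theta_{t_0}+dd^c v_{t_0})^n(x_0)$. Inserting this into the differential inequalities for $u$ and $v$ and combining with $\partial_t w\geq 0$,
$$
\vep \leq \dot u_{t_0}(x_0) - \dot v_{t_0}(x_0) \leq \log\frac{(\theta_{t_0}+dd^c u_{t_0})^n}{\omega^n}(x_0) - \log\frac{(\theta_{t_0}+dd^c v_{t_0})^n}{\omega^n}(x_0) \leq 0,
$$
which is absurd. Hence $w\leq 0$ on $[0,T]\times X$ and the proof is complete.

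There is no serious obstacle; this is a classical argument. The only two points requiring a moment's thought are (i) the one-sided character of the time-derivative condition at the final time $t_0=T$, which is precisely why the barrier term $-\vep t$ is subtracted rather than added (so that the contradiction uses $\partial_t w\geq 0$ rather than an equality), and (ii) the need for strict positivity of $\theta_{t_0}+dd^c v_{t_0}$ at $x_0$ in order to apply monotonicity of the determinant, which is exactly the role played by the strict $\theta_t$-plurisubharmonicity hypothesis in the statement.
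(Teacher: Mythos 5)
Your proof is correct: the $\vep t$-barrier plus the parabolic maximum principle (one-sided time derivative at the final time, negativity of $dd^c$ at a spatial maximum, monotonicity of the determinant on positive Hermitian forms) is exactly the classical argument, and the paper itself gives no independent proof but simply cites \cite[Corollary 2.2]{GZ13}, where the same scheme is used. The only cosmetic remark is that your assumption ``$w(t_0,x_0)>0$'' is never used; the contradiction $\vep\leq 0$ already excludes any maximizer with $t_0>0$.
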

\noindent We refer to  \cite[Corollary 2.2]{GZ13} for a proof. \\

In dealing with the weak K\"ahler-Ricci flow it is often more convenient to use the following  generalized comparison principle, which allows the subsolution to have singularities:
\begin{prop}
\label{prop: gen comp prin}
Assume that $\f: [0,T_{\max})\times X\rightarrow \R$ is a smooth solution of the parabolic equation (\ref{eq: parabolic}). Assume that $\p$ is a weak subsolution of (\ref{eq: parabolic}) with initial data $\p_0\leq \f_0$ and $c(\p_0)=+\infty$. Then $\p_t\leq \f_t, \ \forall t\in [0,T_{\max})$.
\end{prop}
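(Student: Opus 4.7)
The plan is to reduce everything to the classical smooth parabolic maximum principle (Proposition~\ref{prop: clas_comp_prin}) applied on subcylinders $[\delta,T]\times X$ with $0<\delta<T<T_{\max}$, and then to pass to the limit $\delta\to 0^+$. The key observation enabling this reduction is that the hypothesis $c(\p_0)=+\infty$ yields $1/(2c(\p_0))=0$, so condition (iii) in the definition of a weak (sub)solution forces the obstruction set $D_{\vep}$ to be empty for every $\vep>0$. Consequently, $\p$ is smooth on the full cylinder $(0,T_{\max})\times X$, where it satisfies $\partial_t\p_t\leq \log\bigl[(\theta_t+dd^c\p_t)^n/\omega^n\bigr]$ in the classical sense, while $\f$ is smooth everywhere by assumption.

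Fix $0<\delta<T<T_{\max}$ and $\vep>0$, and set
$$w(t,x):=\p_t(x)-\f_t(x)-\vep(t-\delta)$$
on $[\delta,T]\times X$. Since $w$ is smooth and $X$ is compact, $w$ attains its maximum at some $(t^{*},x^{*})$. Suppose toward a contradiction that $t^{*}>\delta$. Then $\partial_t w(t^{*},x^{*})\geq 0$ (the one-sided derivative suffices if $t^{*}=T$) and $dd^c_x w(t^{*},x^{*})\leq 0$, so $dd^c\p_{t^{*}}(x^{*})\leq dd^c\f_{t^{*}}(x^{*})$. Monotonicity of the determinant for positive Hermitian $(1,1)$-forms yields $(\theta_{t^{*}}+dd^c\p_{t^{*}})^n\leq (\theta_{t^{*}}+dd^c\f_{t^{*}})^n$ at $x^{*}$, and combining the subsolution inequality for $\p$ with the equation for $\f$ gives
$$\partial_t\p(t^{*},x^{*})\leq \log\frac{(\theta_{t^{*}}+dd^c\p_{t^{*}})^n}{\omega^n}(x^{*})\leq \log\frac{(\theta_{t^{*}}+dd^c\f_{t^{*}})^n}{\omega^n}(x^{*})=\partial_t\f(t^{*},x^{*}),$$
so $\partial_t w(t^{*},x^{*})\leq -\vep<0$, a contradiction. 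Hence the maximum is achieved at $t^{*}=\delta$, giving $\sup_X(\p_t-\f_t)\leq \sup_X(\p_\delta-\f_\delta)+\vep(t-\delta)$ on $[\delta,T]$; letting $\vep\to 0$ and $T\nearrow T_{\max}$,
$$\sup_X(\p_t-\f_t)\leq \sup_X(\p_\delta-\f_\delta),\qquad \forall\ 0<\delta<t<T_{\max}.$$

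It remains to prove $\limsup_{\delta\to 0^+}\sup_X(\p_\delta-\f_\delta)\leq 0$. Condition (ii) gives $\p_\delta\to \p_0$ in $L^1(X)$, while the global smoothness of $\f$ yields $\f_\delta\to \f_0$ uniformly on $X$; hence it suffices to show $\limsup_\delta\sup_X(\p_\delta-\f_0)\leq \sup_X(\p_0-\f_0)$. Choosing $C>0$ large enough so that $C\omega-dd^c\f_0\geq \omega$, each $\p_\delta-\f_0$ lies in $\pshXo$ for the fixed K\"ahler form $C\omega$, and this family converges in $L^1$ to $\p_0-\f_0$. The classical Hartogs upper semicontinuity for quasi-psh functions with a fixed reference form then delivers the required estimate, and since $\p_0\leq \f_0$ we conclude $\limsup_\delta\sup_X(\p_\delta-\f_\delta)\leq 0$. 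Combined with the previous paragraph, this gives $\p_t\leq \f_t$ on $X$ for every $t\in(0,T_{\max})$.

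The main technical obstacle is precisely the passage to the initial time: on $(0,T_{\max})$ everything is smooth thanks to $c(\p_0)=+\infty$ and the smooth parabolic maximum principle runs without difficulty, but $L^1$-convergence at $t=0$ does not automatically control the supremum of $\p_\delta-\f_\delta$. The saving observation, which must be singled out carefully, is that the smoothness of $\f_0$ places the differences $\p_\delta-\f_0$ inside a compact family of quasi-psh functions for one fixed K\"ahler form, at which point Hartogs' lemma applies.
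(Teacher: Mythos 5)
Your proof is correct and follows essentially the same route as the paper: the smooth parabolic maximum principle on $[\delta,T]\times X$ to reduce to $\sup_X(\p_\delta-\f_\delta)$, followed by Hartogs' lemma together with the continuity of $\f$ up to $t=0$ to pass to the limit $\delta\to 0$. The only cosmetic difference is that you re-derive the smoothness of $\p$ on $(0,T_{\max})\times X$ from $c(\p_0)=+\infty$, whereas the paper's notion of weak subsolution used in this proposition already assumes smoothness on the open cylinder, so that step (while harmless) is not needed.
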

Here a weak subsolution with initial data $\p_0$ is by definition a smooth function $\p: (0,T_{\max})\times X \to \R$ such that 
$$
\dot{\p_t}\leq \log \left[\frac{(\theta_t+dd^c \p_t)^n}{\omega^n} \right], \ \forall t\in (0,T_{\max}),\ \ \p_t\to \p_0\ \ {\rm in}\ \ L^1(X).
$$
In Proposition \ref{prop: clas_comp_prin}, the functions $\f,\psi$ are  both smooth in $[0,T_{\max})\times X$ while in the above statement there is no regularity assumption on $\psi$. We also stress that the result still holds without the assumption that $c(\f_0)=+\infty$ (see Section \ref{sec: Regularity} and Section \ref{sec: Stability}). The proof without this condition is however more involved and we establish it later.   
\begin{proof}
Fix $\varepsilon>0$ and note that $\psi-\f$ is a smooth function on $[\varepsilon, T]\times X$, for any $T<T_{\max}$. It then follows from the maximum principle that $\psi-\f$ attains its maximum on $[\varepsilon, T]\times X$ at a point $(\varepsilon, x_\varepsilon)$, thus
$$\psi_t-\f_t\leq \sup_X (\psi_\varepsilon-\f_\varepsilon).$$
Now, the function $(\varepsilon, x)\rightarrow \f_\varepsilon(x)$ is smooth by assumption and in particular continuous. It then follows from Hartogs' lemma and from the fact that $\psi_t$ converges to $\psi_0$ in $L^1$ when $t\rightarrow 0$ that
$$\sup_X(\psi_\varepsilon-\f_\varepsilon)\rightarrow \sup_X(\psi_0-\f_0)\leq 0 \quad \mbox{as}\; \varepsilon\rightarrow 0.$$
It then follows that $\psi_t\leq \f_t$ for all $t>0$.
\end{proof}

\begin{rem}
One can also prove Proposition \ref{prop: gen comp prin} by using the following observation: any weak solution of the parabolic equation (\ref{eq: parabolic}) is upper semi-continuous in two variables $(t,x)$. The latter fact follows from Hartogs' lemma as the following lemma shows.  
\end{rem}

\begin{lem}
Let $T>0$ be a constant and let  $\f: [0,T]\to \psh(X,\omega)$ be a continuous map where  the latter space is equipped with the $L^1$ topology. Then $\f(t,x)$ as a function of two variables $(t,x)$ is upper semicontinuous on $[0,T]\times X$.  
\end{lem}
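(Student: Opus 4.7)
The plan is to reduce the statement to a direct application of Hartogs' lemma for $\omega$-psh functions. Fix a point $(t_0,x_0)\in [0,T]\times X$ and an arbitrary sequence $(t_j,x_j)\to (t_0,x_0)$; the goal is to prove $\limsup_j \f_{t_j}(x_j)\le \f_{t_0}(x_0)$.

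The first step is to record that the family $\{\f_t\}_{t\in[0,T]}$ is uniformly bounded from above on $X$. Indeed, the standard estimate $\sup_X u \le C(X,\omega)+\frac{1}{\vol(X)}\int_X u\,\omega^n$ valid for every $u\in \pshXo$, combined with the $L^1$-continuity of $t\mapsto \f_t$ on the compact interval $[0,T]$, provides a constant $M$ with $\f_t\le M$ on $X$ for all $t\in[0,T]$. Without this uniform upper bound, the Hartogs-type estimates used below would not apply.

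The second step applies Hartogs' lemma. By $L^1$-continuity, $\f_{t_j}\to \f_{t_0}$ in $L^1(X)$, and $\f_{t_0}\in\pshXo$ is in particular upper semicontinuous at $x_0$. Given $\vep>0$, upper semicontinuity of $\f_{t_0}$ yields a compact neighborhood $K$ of $x_0$ on which $\f_{t_0}\le \f_{t_0}(x_0)+\vep$. Working in a local chart around $x_0$ and subtracting a smooth local potential of $\omega$ puts us in the classical setting of Hartogs' lemma: a sequence of psh functions uniformly bounded above, converging in $L^1$ to a psh limit, compared against the continuous (here constant) upper bound $\f_{t_0}(x_0)+\vep$ on $K$. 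Hartogs' lemma then gives $\f_{t_j}\le \f_{t_0}(x_0)+2\vep$ uniformly on $K$ for all sufficiently large $j$. Since $x_j\in K$ eventually, one concludes $\limsup_j \f_{t_j}(x_j)\le \f_{t_0}(x_0)+2\vep$, and letting $\vep\to 0$ finishes the argument.

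The proof is essentially routine; the only genuine subtlety is selecting the right form of Hartogs' lemma, namely the uniform upper bound on compact sets rather than merely the pointwise $\limsup$ statement, so that the estimate persists under the moving point $x_j$, and then combining it with the usc of $\f_{t_0}$ to obtain a constant comparison function on a neighborhood of $x_0$.
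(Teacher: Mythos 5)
Your argument is correct and follows essentially the same route as the paper: both proofs combine the $L^1$-continuity in $t$ with upper semicontinuity of $\f_{t_0}$ on a small compact neighborhood and then invoke Hartogs' lemma (in its uniform-on-compacts form) to control $\f_{t_j}$ near $x_0$, the only cosmetic difference being that the paper phrases it as a proof by contradiction while you argue directly and spell out the uniform upper bound and local-chart reduction.
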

We thank Vincent Guedj for the following proof:
\begin{proof}
Let  $(\f_t)$ be a weak solution of the equation (\ref{eq: parabolic}) and assume by contradiction  that there exists a sequence $(t_j,x_j)$ converging to $(t,x)$ such that
$$
\limsup_{j\to+\infty} \f_{t_j}(x_j) > \f_t(x). 
$$
Since the function $t\mapsto \f_t$ is continuous as a map from $[0,T_{\max})$ to $L^1(X)$, it follows that $\f_{t_j}$ converges in 
$L^1(X)$ to $\f_{t}$. We can assume that $\f_t(x)>-\infty$ (the other case follows similarly). Take $\vep>0$ small enough such that 
$$
\limsup_{j\to+\infty} \f_{t_j}(x_j) > \f_t(x) +\vep. 
$$
It follows from the semicontinuity of $\f_t$ that there exists a small closed ball $B$ around $x$ such that
$$
\f_t(y) < \f_t(x)+\vep/2, \ \forall y\in B. 
$$
Then for $j$ big enough we will have $x_j\in B$. Thus 
$$
\limsup \f_{t_j}(x_j) \leq \limsup \sup_B \f_{t_j} \leq \sup_B \f_t < \f_t(x) -\vep/2, 
$$
as follows from Hartogs' lemma. This  yields a contradiction.
\end{proof}

\subsection{Equisingular approximation}
\label{subsec: equi}
Let $\varphi$ be a $\omega$-psh function on $X$. We are interested in approximating $\varphi$ by quasi-psh functions which are less singular than $\varphi$ and such that their singularities are somehow comparable to those of $\varphi$.  In this paper we will make use of Demailly's equisingular approximation. 
For each $c>0$, define
$$
D_c:= \left \{ z\in X \setdef \nu(\f,z) \geq c \right \}.
$$
It follows from a result of Y.T. Siu \cite{Siu74} that each $D_c$ is an analytic subset of $X$ (see \cite{Kil79} or \cite{Dem14} for alternative proofs). 
If the Lelong number of 
$\f$ is positive at some point on $X$ it was conjectured by Demailly and Koll\'ar in \cite{DK01} that $e^{-\alpha \f}$ is not integrable at the critical value $\alpha$. This openness conjecture was recently proved by Berndtsson \cite{Bern13} (see also \cite{Hiep}, \cite{GuZh} for alternative proof) while the two dimensional case was proven some years ago by Favre and Jonsson \cite{FJ05}. 

Demailly's equisingular approximation gives us  a sequence of quasi-psh functions $(\f_j)$ having analytic singularities converging to $\f$. Moreover, the sequence $(\f_j)$ also keeps track on the Lelong numbers of $\f$.

\begin{thm}[Demailly's equisingular approximation]
\cite{Dem14, Dem92}
\label{thm: Demailly equisingular}
Let $\f$ be a $\omega$-plurisubharmonic function on $X$. There exists  a sequence of quasi-plurisubharmonic functions $(\f_m)$ such that 
\begin{itemize}
\item[(i)] $\f\leq \f_m$ and $\f_m$ converges pointwise and in $L^1(X)$ to $\f$ as $m\to +\infty$. 
\item[(ii)] $\f_m$ has logarithmic singularities, $e^{2m \f_m}$ is smooth on $X$ and vanishes on the set of points $x$ such that $e^{-2m \f}$ is not locally integrable near $x$. 
\item[(iii)] $\omega + dd^c \f_m \geq -\vep_m\omega$, where $\vep_m>0$ converges to $0$ as $m\to +\infty$. 
\item[(iv)] $\int_X e^{2m(\f_m-\f)}dV <+\infty$.
\end{itemize}

\end{thm}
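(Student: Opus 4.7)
The plan is to carry out Demailly's Bergman kernel regularization procedure. Given $\f\in \pshXo$, the idea is to produce $\f_m$ as $\frac{1}{2m}$ times the logarithm of a weighted Bergman kernel associated to holomorphic sections of a high power of an appropriate positive line bundle; this construction automatically forces analytic/logarithmic singularities, while the weighted $L^2$ theory provides the quantitative comparison with $\f$.

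First I would fix (assuming for simplicity that $X$ is projective) an ample line bundle $A$ on $X$ equipped with a smooth Hermitian metric $h$ whose Chern curvature majorizes $\omega$. For each $m\geq 1$, consider the Hilbert space
\[
\Hc_m := \left\{ \sigma\in H^0(X, A^{\otimes m}) \setdef \int_X |\sigma|_{h^m}^2\, e^{-2m\f}\, dV <+\infty\right\},
\]
and if $(\sigma_{m,j})_j$ is an orthonormal basis of $\Hc_m$, set
\[
\f_m := \frac{1}{2m} \log \sum_j |\sigma_{m,j}|_{h^m}^2.
\]
By construction $\f_m$ is quasi-psh with logarithmic poles along the common zero scheme of $\Hc_m$, giving the structural part of (ii); the curvature computation yields $\omega+dd^c\f_m \geq -\vep_m\omega$ where $\vep_m\to 0$ measures the discrepancy between $\frac{1}{m}\cdot\mathrm{curv}(h^m)$ and $\omega$. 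This proves (iii). In the strictly K\"ahler (non-projective) setting the same construction has to be carried out locally on a Stein covering and glued via partitions of unity, which is where the error term $\vep_m$ genuinely appears.

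The quantitative comparison $\f\leq \f_m + C/m$ comes from the Ohsawa--Takegoshi $L^2$ extension theorem: at any $x\in X$, extending the constant germ $1$ from $\{x\}$ with respect to the weight $e^{-2m\f}$ produces $\sigma\in \Hc_m$ whose $L^2$-norm is controlled by $C\cdot e^{-2m\f(x)}$, and evaluating at $x$ gives the pointwise lower bound $\f_m(x)\geq \f(x)-C/m$. Passing to $m\to+\infty$ then yields pointwise and $L^1$ convergence, i.e.\ property (i). For (iv), one integrates the inequality $\sum_j |\sigma_{m,j}|_{h^m}^2 \leq N_m\cdot(\text{uniform local Bergman bound})$ against $e^{-2m\f}$: since the $\sigma_{m,j}$ are $L^2$-normalized, this gives $\int_X e^{2m(\f_m-\f)}dV \leq N_m <+\infty$. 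The vanishing statement in (ii) requires identifying the common zero locus of $\Hc_m$ with the zero set of the multiplier ideal sheaf $\mathcal{I}(2m\f)$; this follows from the strong openness theorem of Berndtsson (or Guan--Zhou) cited in the excerpt, which guarantees that $e^{2m\f_m}$ vanishes exactly where $e^{-2m\f}$ is non-integrable.

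The main obstacle is to make the Ohsawa--Takegoshi extension yield constants uniform in $m$ and to patch the local constructions in the general K\"ahler case without losing precise control on Lelong numbers, which properties (ii)--(iv) all rely on. Extracting the optimal rate $\vep_m\to 0$ from the gluing procedure and carefully tracking the dimensions $N_m$ so that (iv) remains effective is the delicate technical heart of the argument; everything else reduces to standard $L^2$-$\overline\partial$ estimates applied to the weighted setting.
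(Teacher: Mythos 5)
The paper does not prove this statement at all---it is quoted verbatim from Demailly \cite{Dem92, Dem14}---and your Bergman-kernel/Ohsawa--Takegoshi outline is indeed the strategy of those references, so the approach is the right one; but as a proof it has concrete gaps. First, the convergence in (i) cannot follow from the Ohsawa--Takegoshi lower bound $\f_m\geq \f-C/m$ alone (which, incidentally, only gives the stated inequality $\f\leq\f_m$ after replacing $\f_m$ by $\f_m+C/m$): that bound yields $\liminf_m\f_m\geq\f$, and nothing in your argument prevents $\f_m$ from staying far above $\f$. The missing half of Demailly's argument is the mean-value upper bound $\f_m(x)\leq \sup_{B(x,r)}\f+\frac{1}{m}\log\left(C r^{-n}\right)$, obtained from the sub-mean-value inequality applied to $|\sigma|^2$ for unit-norm sections, which gives $\limsup_m\f_m\leq\f$ by upper semicontinuity; this estimate appears nowhere in your sketch. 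Second, the appeal to strong openness for (ii) is misplaced: the inclusion actually asserted there (vanishing of $e^{2m\f_m}$ on the non-integrability locus of $e^{-2m\f}$) is elementary, since any section of finite weighted norm must vanish where $e^{-2m\f}$ fails to be integrable, while the reverse inclusion is exactly what Ohsawa--Takegoshi provides; no openness theorem is needed, and Demailly's result predates it.

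More seriously, the situation the paper needs is a general K\"ahler form $\omega$, whose class need not be integral, so the global space $H^0(X,A^{\otimes m})$ is unavailable and the construction must be carried out with local weighted Bergman kernels glued by a partition of unity---precisely the part you defer. Two of your key steps break in that setting: the dimension count for (iv) fails because the local weighted Bergman spaces are infinite-dimensional (each orthonormal basis element has weighted norm $1$, so termwise integration gives $+\infty$); Demailly's remedy is the strong Noetherian property of coherent ideal sheaves, which lets one compare the full Bergman series with a finite partial sum on compact subsets. Moreover the error $\vep_m$, and the persistence of (i)--(iv) through the gluing, is the actual content of \cite{Dem92} (the paper's Remark 2.8 records that $\vep_m$ can be taken zero only in the integral ample case); labelling this ``the delicate technical heart'' without an argument leaves the theorem unproved in the generality in which it is used. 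So your proposal correctly identifies the proof in the cited source but does not substitute for it.
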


\begin{lem}\label{lem: Dem}
Fix $\f\in \psh(X,\omega)$. Then 
for each $\vep>0$ there exists $\kappa(\vep)>0$ and $\p_{\vep}\in PSH(X,\omega)\cap \Cc^{\infty}(X\setminus D_{\kappa(\vep)})$ such that 
$$
\int_X e^{2(\p_{\vep}- \f)/\vep} dV <+\infty. 
$$
Here, $\kappa(\vep)>0$ decreases to $0$ as $\vep \searrow 0$.
\end{lem}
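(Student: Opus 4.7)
The plan is to use Demailly's equisingular approximation (Theorem \ref{thm: Demailly equisingular}) and a mild normalization to land in $\psh(X,\omega)$. Apply Theorem \ref{thm: Demailly equisingular} to $\f$ to produce $(\f_m)$ and parameters $\vep_m\searrow 0$ satisfying (i)--(iv). The natural candidate is
$$
\p_\vep := \frac{\f_m}{1+\vep_m}
$$
for a large integer $m=m(\vep)$ to be chosen. The inequality $\omega+dd^c\f_m \geq -\vep_m\omega$ rearranges to $\omega + dd^c \p_\vep \geq 0$, so $\p_\vep \in \psh(X,\omega)$.

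To locate the singular set, property (ii) shows that $\f_m$ is smooth wherever $e^{2m\f_m}>0$, and that this open set contains every $x$ near which $e^{-2m\f}$ is locally integrable, i.e.\ every $x$ with $c(\f,x)>m$. Thus the singular set of $\f_m$ (and of $\p_\vep$) is contained in $\{x:c(\f,x)\leq m\}$. Skoda's integrability theorem \cite{Sko} gives $c(\f,x) \geq 1/\nu(\f,x)$, so this set sits inside $D_{1/m}$. Setting $\kappa(\vep) := 1/m(\vep)$ then delivers $\p_\vep \in \Cc^{\infty}(X\setminus D_{\kappa(\vep)})$.

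For the exponential estimate, decompose
$$
\frac{2(\p_\vep-\f)}{\vep} = \frac{2(\f_m-\f)}{\vep(1+\vep_m)} - \frac{2\vep_m\,\f}{\vep(1+\vep_m)}
$$
and apply H\"older's inequality with conjugate exponents $p,q>1$:
$$
\int_X e^{2(\p_\vep-\f)/\vep}\, dV \leq \left(\int_X e^{\frac{2p(\f_m-\f)}{\vep(1+\vep_m)}} dV\right)^{\!1/p}\! \left(\int_X e^{-\frac{2q\vep_m\,\f}{\vep(1+\vep_m)}} dV\right)^{\!1/q}.
$$
Since $\f_m-\f\geq 0$ by (i), the first integral is majorized by $\int_X e^{2m(\f_m-\f)}\, dV < +\infty$ as soon as $p \leq m\vep(1+\vep_m)$, invoking (iv). The second integral is finite once the coefficient $\frac{2q\vep_m}{\vep(1+\vep_m)}$ drops below $2c(\f)$, which happens once $\vep_m$ is small enough relative to $\vep$; Skoda then supplies the integrability. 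Choosing $p$ slightly larger than $1$ and $m=m(\vep)$ sufficiently large fulfills both conditions simultaneously, and $\kappa(\vep)=1/m(\vep)\to 0$ as $\vep\to 0$.

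The main obstacle is to translate the measure-theoretic failure of integrability of $e^{-2m\f}$, which is how Demailly's construction describes the singular set of $\f_m$, into the geometric Lelong-number condition $\nu(\f,x)\geq 1/m$. This passage is exactly Skoda's lower bound $c(\f,x)\geq 1/\nu(\f,x)$; once that is in hand, the rest of the argument reduces to tuning $p$ and $m$.
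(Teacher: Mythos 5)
Your proof is correct and is essentially the paper's own argument: the same candidate $\p_{\vep}=\f_m/(1+\vep_m)$ built from Demailly's equisingular approximation, the same H\"older splitting of $2(\p_\vep-\f)/\vep$ into a term controlled by Theorem \ref{thm: Demailly equisingular}(iv) and a term controlled by the integrability index, and the same choice $\kappa(\vep)=1/m(\vep)$ (the paper implicitly takes $p=q=2$). One minor remark: the finiteness of the second factor follows from the definition of $c(\f)$ rather than from Skoda's theorem — Skoda is only needed, as you correctly use it, to place the singular set of $\f_m$ inside $D_{1/m}$.
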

\begin{proof}
Fix $\vep>0$ and let $c(\f)$ be the integrability index of $\f$, i.e. 
$$
c(\f):=\sup\left\{t>0\setdef e^{-2t\varphi}\in L^1(X)\right\}.
$$ 
We can assume that $c(\f)<+\infty$ (otherwise we are done). Let $(\f_m)$ be the Demailly's approximating sequence of $\f$ (see Theorem \ref{thm: Demailly equisingular}). Note that $\f_m$ is smooth outside the analytic subset $D_{1/m}=\{z\in X\setdef \nu(\f,z)\geq  \frac{1}{m}\}$ and 
$$
\omega + dd^c \f_m \geq -\vep_m\omega,
$$
and the constant $\vep_m$ decreases to $0$ as $m$ goes to $+\infty$.  
We let $m(\vep)>>1$ denote the smallest integer $m$ such that
$$
m>\frac{2}{\vep(1+\vep_m)} \ \ {\rm and}\ \ \frac{2\vep_m}{\vep(1+\vep_m)}<c(\f).
$$
H\"older's inequality shows that the function
$$
\psi_{\vep}:=\frac{\f_m}{1+\vep_m},
$$
is smooth outside $D_{\kappa(\vep)}$ and satisfies our requirements with $\kappa(\vep)=m(\vep)^{-1}$.
\end{proof}

\begin{rem}
The constants $\vep_m$ come from the gluing process of Demailly  \cite{Dem92}. When $\omega$ represents the first Chern class of an ample holomorphic line bundle over $X$, one can argue globally and avoid the gluing process. Hence $\vep_m$ can be taken to be zero in this case and we can choose $k(\vep)=\vep$. 
\end{rem}

\section{A priori estimates}
\label{sec: Estimates}
Let $(X,\omega)$ be a compact K\"ahler manifold of dimension $n$.  We want to run the flow from a singular initial data $\f_0\in \psh(X,\omega)$ having positive Lelong numbers. The strategy will be to pick $(\f_{0,j})$ a smooth sequence of strictly $\omega$-psh function decreasing to $\f_0$ as $j\rightarrow +\infty$ and to consider $(\f_{t,j})$ the smooth flow running from $\f_{0,j}$. The goal is to establish uniform estimates that will allow us to pass to the limit. Thus, in the sequel we work with $\f_{t,j}$ (that we will denote by $\f_t$ for simplicity) but to get uniform estimates we should always take into account the singular behavior of the initial data $\f_0$.

\subsection{Notations} From now on we fix $\vep_0>0$ and a function  $\p\in \psh(X,\omega)\cap \Cc^{\infty}(X\setminus D)$ for some  analytic subset $D\subset X$. Let $E_1,E_2$ denote the following quantities
 \begin{equation*}\label{eq: hyp fond}
E_1:=\int_X \exp{\left(\frac{2\p-2\f_0}{\vep_0}\right)} dV <+\infty,
 \end{equation*}
\begin{equation*}
\label{eq: integrability of varphi0}
E_2:=\int_X \exp{\left(\frac{-\p}{T}\right)}\omega^n <+\infty,
\end{equation*}
where $1/(2c(\f_0)) <T<T_{\max}$.
Observe that such a function $\psi$ exists thanks to Lemma \ref{lem: Dem} and $E_2$ is finite since $T>1/2c(\f_0)$ and by construction $\psi$ is less singular than $\f_0$. We also underline that $D$ is a Lelong number super-level set related to the function $\f_0$. Since we approximate $\f_0$ from above by a smooth sequence $(\f_{0,j})$, the corresponding 
constants $E_1^j$ are uniform in $j$ and we can pass to the limit when $j\to+\infty$. 
\medskip

In the remaining part of this section $\f_0$ will be a smooth strictly $\omega$-psh function on $X$. One should keep in mind that $\f_0$ in this a priori estimate section plays the role of the approximating sequence $\f_{0,j}$. 

We set, just for simplicity,
$$
\theta_t:=\omega +t \eta-t\ric(\omega). 
$$ 
Let $\f_t$ be the solution of the parabolic Monge-Amp\`ere equation:
\begin{equation*}
\frac{\partial \f_t}{\partial t} = \log \left[\frac{(\theta_t+dd^c \f_t)^n}{\omega^n}\right], \ \f_t\vert_{t=0}=\f_0,
\end{equation*}
on a maximal interval $[0,T_{\max})$. Recall that $T_{\max}$ is defined by
$$
T_{\max}:=\sup\{t\geq 0 \, |\, tK_X+t\{\eta\} +\alpha_0 \;\;\mbox{is nef} \}.
$$
The existence of the solution on this maximal interval follows from \cite{Cao85}, \cite{Tsu}, \cite{TZha06}. 
We fix $T$ and $S$ such that 
$$
\frac{1}{2c(T_0)}<T<S<T_{\max}. 
$$ 
Since our purpose is to prove regularity of the maximal solution of the equation (\ref{eq: parabolic}) on $(0,T]$, it costs no generality to assume that
$$
\frac{2\omega}{3}\leq \theta_t \leq 2\omega, \forall t\in [0,S].
$$ 
Thus  for each $t\in [0,S]$ we have
$$
\theta_t = \frac{t\theta_S}{S}    + \frac{S-t}{S}\omega \geq \left(1- \frac{t}{3S}\right)\omega.  
$$

\subsection{The $\Cc^0$ estimate}

We recall the following upper bound for $\f_t$ whose proof can be found, for example, in \cite{GZ13}.
\begin{lem}
\label{lem: upper bound varphi_t}
The following estimate holds
$$
\f_t \leq \sup_X \f_0 + (n \log 2)t, \ \ \forall t\in [0,T]. 
$$
\end{lem}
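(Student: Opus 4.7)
The plan is to apply the classical comparison principle (Proposition \ref{prop: clas_comp_prin}) against the explicit supersolution $v_t(x) := \sup_X \f_0 + (n \log 2)\, t$. Since $v_t$ does not depend on $x$, we have $\theta_t + dd^c v_t = \theta_t$, which is strictly positive on $[0,S]$ by the normalization $\theta_t \geq (2/3)\omega$. In particular $v_t$ is strictly $\theta_t$-psh, so Proposition \ref{prop: clas_comp_prin} is applicable.

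Next I would check that $v_t$ is a supersolution. Using $\theta_t \leq 2\omega$ on $[0,S]$,
\[
\log\!\left[\frac{(\theta_t + dd^c v_t)^n}{\omega^n}\right] \;=\; \log\!\left[\frac{\theta_t^n}{\omega^n}\right] \;\leq\; \log 2^n \;=\; n \log 2 \;=\; \frac{\partial v_t}{\partial t}.
\]
Moreover at $t=0$, $v_0 = \sup_X \f_0 \geq \f_0$. Applying Proposition \ref{prop: clas_comp_prin} to $u_t := \f_t$ (a solution, hence a subsolution) and $v_t$ (a supersolution) yields
\[
\f_t - v_t \;\leq\; \sup_X (\f_0 - v_0) \;=\; \sup_X \bigl(\f_0 - \sup_X \f_0\bigr) \;=\; 0,
\]
which is exactly the desired bound.

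There is no real obstacle here: the only ingredient beyond the comparison principle is the normalization $\theta_t \leq 2\omega$ on $[0,S]$ fixed earlier in the section, which provides the constant $n\log 2$ in front of $t$. If one preferred to avoid Proposition \ref{prop: clas_comp_prin} altogether, the same estimate follows from a direct maximum-principle argument: perturb to $G_\vep(t,x) := \f_t(x) - (n\log 2 + \vep)t$, observe that at any interior maximum $(t_0,x_0)$ one has $dd^c \f_{t_0}(x_0) \leq 0$ and hence $\partial_t \f_{t_0}(x_0) \leq \log(\theta_{t_0}^n/\omega^n)(x_0) \leq n\log 2$, forcing $\partial_t G_\vep(t_0,x_0) \leq -\vep < 0$, so the maximum must occur at $t_0 = 0$; letting $\vep \to 0$ gives the conclusion.
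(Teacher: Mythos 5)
Your argument is correct and is essentially the standard one the paper points to in \cite{GZ13}: compare $\f_t$ with the spatially constant supersolution $\sup_X\f_0+(n\log 2)t$, the constant $n\log 2$ coming from the normalization $\theta_t\leq 2\omega$, either via Proposition \ref{prop: clas_comp_prin} or the direct maximum-principle variant you sketch. No gaps; both of your routes are valid and match the intended proof.
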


As we will see in the next sections, if we start the flow from a current with positive Lelong numbers there is no hope for the flow to be smooth. Thus a ``reasonable" $\Cc^0$ estimate must involve some singular quasi-plurisubharmonic function. In our recent works \cite{DL14a} and \cite{DL14b} we have developed the theory of generalized capacities to deal with these singular estimates. In particular we proved that if $\MA(\f) \leq C e^{-\phi}$ for some quasi-plurisubharmoinc function $\phi$ then for any $a>0$ small enough (i.e. $a\phi\in \psh (X, \omega/2)$) there exists a uniform constant $A>0$ such that $$\f \geq a\phi -A.$$

In Theorem \ref{thm: C0-estimate} and Lemma \ref{DL C0 estimate} below we use the same ideas to establish the following $\Cc^0$ estimate for the complex parabolic Monge-Amp\`ere equation (\ref{eq: parabolic}). 
\begin{thm}\label{thm: C0-estimate}
Fix $\vep>\vep_0$. For $t\in [\vep, T]$ the following estimate  holds
$$
\f_t \geq \left(1-\frac{t}{2T}\right)\p - C,
$$
where  $C$ is a positive constant depending on $\vep$,  our fixed parameters and an upper bound for $E_1$ and $E_2$.
\end{thm}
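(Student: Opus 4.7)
The plan is to run a Kolodziej-type iteration, adapted to the parabolic setting, using the generalized capacity $\Capis$. For each $s>0$ define
$$
g(s) := \sup_{t \in [\vep, T]} \Capis\!\bigl(\{\f_t < (1-t/(2T))\psi - s\}\bigr),
$$
and aim to verify the hypothesis of Lemma \ref{lem: Kol tech}; this will force $g(s)=0$ for all $s$ beyond a threshold depending only on $\vep$ and upper bounds for $E_1$ and $E_2$. Since $\f_t$ is smooth and $\Capis$ dominates a positive multiple of $\omega^n$ on open sets, this amounts to the desired pointwise estimate.

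The first step is a pointwise upper bound on $\dot\f_t$ via the parabolic maximum principle. Set $H(t,x) := t\dot\f_t - \f_t + \f_0$. Using $\ddot\f_t = \tr_{\omega_t}\chi + \Delta_{\omega_t}\dot\f_t$ together with $\theta_t - t\chi = \omega$, a direct computation gives
$$
(\partial_t - \Delta_{\omega_t})H \;=\; n - \tr_{\omega_t}(\omega + dd^c \f_0) \;\leq\; n.
$$
Since $H|_{t=0}\equiv 0$, the maximum principle applied to $H - nt$ yields $H \leq nt$, equivalently
$$
\dot\f_t \;\leq\; \frac{\f_t - \f_0}{t} + n, \qquad \text{hence} \qquad \MA(\f_t) \;\leq\; e^n\, e^{(\f_t - \f_0)/t}\,\omega^n.
$$

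The second step turns this into an integral bound exploiting $E_1,E_2<\infty$. On $A_{t,s} := \{\f_t < (1-t/(2T))\psi - s\}$, writing $(1-t/(2T))/t = 1/t - 1/(2T)$ gives
$$
\MA(\f_t) \;\leq\; C\, e^{-s/t}\, e^{(\psi - \f_0)/t}\, e^{-\psi/(2T)}\,\omega^n \qquad \text{on } A_{t,s}.
$$
Since $t \in [\vep, T]$ and $\vep > \vep_0$, Hölder's inequality with dual pair $p := 2t/\vep_0 \geq 2\vep/\vep_0 > 1$ and $q := p/(p-1) < 2$ controls the first exponential factor by $E_1^{1/p}$ and the second by $E_2^{q/2}\vol(X)^{1-q/2}$; combined with $1/t \geq 1/T$ this yields
$$
\int_{A_{t,s}} \MA(\f_t) \;\leq\; C\, e^{-s/T}.
$$

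The final step is the generalized capacity-comparison of \cite{DL14b}, which here reads $r^n\,\Capis(A_{t, s+r}) \leq C \int_{A_{t,s}} \MA(\f_t)$ for all $s,r > 0$ and produces $g(s+r) \leq C r^{-n} e^{-s/T}$; an elementary rearrangement verifies $s\, g(t+s) \leq C' g(t)^2$ of Lemma \ref{lem: Kol tech} for $s\in[0,1]$ and $t$ beyond a fixed value, so the lemma concludes. The main obstacle will be the rigorous derivation of the capacity comparison in the time-dependent, singular setting: one must compare test functions with $\psi-1\leq u\leq\psi$ against the competitor $(1-t/(2T))\psi$ as $\theta_t$-psh objects, which in turn relies crucially on the normalization $\theta_t \geq (1-t/(3S))\omega$ and the strict inequality $T<S$ to leave a definite gap between $\theta_t$ and $(1-t/(2T))\omega$.
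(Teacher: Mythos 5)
Your first two steps are fine and in fact coincide with the paper's: the derivative bound $\dot\f_t \leq (\f_t-\f_0)/t + n$ is \cite[Proposition 3.1]{GZ13} (your maximum-principle derivation of it is correct), and your H\"older computation controlling the density $e^{(\f_t-\f_0)/t}e^{-\p/(2T)}$ through $E_1$ and $E_2$ is exactly the computation in the paper's proof of Theorem \ref{thm: C0-estimate}, which then hands the problem, for each fixed $t$, to the static Lemma \ref{DL C0 estimate} with $A=1/t$ and $u=\f_0/t$. The gap is in your final step. From the comparison $r^n\Capis(A_{t,s+r}) \leq \int_{A_{t,s}}\MA(\f_t)$ and your bound $\int_{A_{t,s}}\MA(\f_t)\leq Ce^{-s/T}$ you only obtain $g(s+r)\leq C r^{-n}e^{-s/T}$, i.e.\ exponential decay of the capacity, and no ``elementary rearrangement'' converts this into the inequality $s\,g(t+s)\leq C' g(t)^2$ required by Lemma \ref{lem: Kol tech}: there is no lower bound for $g(t)$ in terms of $e^{-t/T}$, and, more fundamentally, exponential decay of $\Capis(\{\f_t<(1-t/(2T))\p-s\})$ in $s$ is perfectly compatible with all of these sets being nonempty (think of $\f_t$ having a pole deeper than $\p_t$), so decay alone can never yield the pointwise conclusion $\f_t\geq \p_t-C$. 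What the Kolodziej lemma needs, and what forces vanishing of $g$ past a threshold, is the \emph{squared} capacity on the right-hand side.

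The missing ingredient --- and the one extra idea in the paper's Lemma \ref{DL C0 estimate} --- is not to spend all of H\"older on $E_1,E_2$: keep a volume factor, estimating $\int_{A_{t,s}}\MA(\f_t)\leq e^{-s/t}\,\vol(A_{t,s})^{1/p}\,\Vert e^{(\p_t-\f_0)/t}\Vert_{L^q}$, bound the $L^q$ norm by $E_1,E_2$ as you did, and then invoke the volume--capacity domination of \cite{GZ05}, $\vol(E)^{1/p}\leq C\,[\Capa_{(1-\delta)\omega}(E)]^2$, together with $\Capa_{(1-\delta)\omega}\leq \Capis$ (\cite[Lemma 2.7]{DL14a}). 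This yields $r^n\Capis(A_{t,s+r})\leq Ce^{-s/T}[\Capis(A_{t,s})]^2$, which after taking $n$-th roots is exactly the hypothesis of Lemma \ref{lem: Kol tech}; the iteration then gives $\Capis(A_{t,s})=0$ for all $s$ beyond a uniform threshold, and the pointwise bound follows because an inequality between quasi-psh functions valid outside a capacity-null (hence Lebesgue-null) set holds everywhere (right-continuity and decay at infinity of the capacity function are checked in \cite[Lemma 2.6]{DL14a}, the comparison in \cite[Proposition 2.8]{DL14a}). With this correction your argument becomes precisely the paper's; note also that the supremum over $t$ in your definition of $g$ is unnecessary (fix $t$ and run the static iteration), and that your closing remark that $\Capis$ ``dominates a multiple of $\omega^n$ on open sets'' does not apply as stated, since $A_{t,s}$ need not be open: $\f_t-\p_t$ is only lower semicontinuous.
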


\begin{proof} Fix $t\in [\vep,T]$. It follows from \cite[Proposition 3.1]{GZ13} that 
\begin{equation}\label{derivative_estimate}
\dot{\f}_t \leq \frac{\f_t-\f_0}{t} +n,
\end{equation}
and so 
$$ 
\MA(\f_t)= e^{\dot{\f}_t }\leq  e^{n+ (\f_t-\f_0)/t} \omega^n.
$$
If $\f_0$ has zero lelong numbers then, as shown in \cite{GZ13}, the estimate (\ref{derivative_estimate}) is enough (and crucial) to get a $\Cc^0$ estimate for $\f_t$ using Ko{\l}odziej's theorem \cite{Kol98}.
However, for the general case, namely when $\f_0$ has positive Lelong numbers, this method breaks down since the bound on $\MA(\f_t)$ is no longer integrable. This is why we need to use the generalized capacities that we
introduced in \cite{DL14b}. 
  
Now set 
$$
\p_t =\left(1-\frac{t}{2T}\right)\p.
$$ 
Observe that
$$
\theta_t \geq  \left(1-\frac{t}{3S}\right)\omega \ \ \mbox{and}\ \ dd^c \p_t + \left(1-\frac{t}{2T}\right)\omega\geq 0.
$$
It then follows that $\p_t$ is $\delta \theta$-plurisubharmonic with 
$\delta \in (0,1)$ depending on $\vep_0, T, S$, namely
$$
\delta = \frac{6TS-3S\vep_0}{6TS-2T\vep_0} <1.
$$
Now, we want to apply Lemma \ref{DL C0 estimate} below with $A=1/t$ and $u=\f_0/t$ to get the desired estimate. In order to do that, we also need to bound the following quantity 
\begin{equation}
\label{eq: derivative estimate 2}
\int_X \exp{\left(\frac{q(\psi_t-\f_0)}{t}\right)}  \omega^n
\end{equation}
for some $q>1$. Indeed, by fixing $p>2$, $2/q$ being its conjugate  such that 
$$
\frac{pq}{\vep} <\frac{2}{\vep_0}
$$
and using H\"older's inequality we obtain
$$
\int_X e^{\frac{q(\p_t-\f_0)}{t}}\omega^n = \int_X e^{\frac{q(\p-\f_0)}{t}} e^{\frac{-q\p}{2T}}\omega^n \leq \left(\int_X e^{\frac{pq(\p-\f_0)}{t}} \omega^n\right)^{1/p} \left(\int_X e^{\frac{-\p}{T}}\omega^n\right)^{q/2}.
$$ 
The right-hand side of the above inequality is finite as follows from (\ref{eq: integrability of varphi0}) and from the construction of $\p$ in Lemma \ref{lem: Dem}:
$$
\int_X e^{\frac{pq(\p-\f_0)}{t}} \omega^n\leq \int_X e^{\frac{2(\p-\f_0)}{\vep_0}} \omega^n<+\infty.
$$
Thus we can bound the term in (\ref{eq: derivative estimate 2}) in terms of  $E_1$ and $E_2$. Therefore, our $\Cc^0$ estimate only depends on the fixed parameters $\vep, \vep_0, T, S$ and on upper bounds for $E_1, E_2$. The proof is thus complete.  
\end{proof}

\begin{lem}
\label{DL C0 estimate}
Assume that $\f\in \EcX$ and $(\omega+dd^c \f)^n\leq e^{A \f - u}\omega^n$, where $A>0$, $u$ is some measurable function such that $e^{A\p-u} \in L^q(X)$ for  $\p\in \psh(X,\delta\omega)$ with $\delta\in (0,1)$ and for some $q>1$. Then 
we have the following estimate 
$$
\f\geq \p -C, 
$$  
where $C$ is a positive constant depending only on $A,\delta,\omega,p$ (where $p$ is the conjugate exponent of $q$) and on an upper bound for $\int_X e^{q(A\p-u)} \omega^n$. 
\end{lem}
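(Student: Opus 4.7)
The plan is to follow the generalized Kolodziej capacity scheme developed in \cite{DL14a,DL14b}, with the classical Monge--Amp\`ere capacity replaced by $\Capis$. Setting $E_s := \{\f < \p - s\}$ for $s>0$, the lemma amounts to showing that $E_C=\emptyset$ for some constant $C$ with the required dependence on the data.

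The three inequalities I would chain together are the following: (a)~a Bedford--Taylor--type comparison principle for $\Capis$, adapted to functions in $\EcX$ as in \cite{DL14b}, namely
$$
t^n\,\Capis(E_{s+t}) \;\leq\; \int_{E_s}\MA(\f), \qquad s\geq 0,\ t\in(0,1];
$$
(b)~the density bound of the hypothesis combined with the pointwise inequality $A\f<A\p-As$ on $E_s$ and H\"older's inequality, giving
$$
\int_{E_s}\MA(\f) \;\leq\; e^{-As}\int_{E_s}e^{A\p-u}\,\omega^n \;\leq\; e^{-As}\,M^{1/q}\,\vol(E_s)^{1/p},
$$
where $M := \int_X e^{q(A\p-u)}\omega^n$; and (c)~a volume--capacity comparison for the generalized capacity, of the form $\vol(E)\leq C_0\,\Capis(E)^{2p}$ for $\Capis(E)$ small enough, an analogue of the classical Alexander--Taylor--Kolodziej exponential bound adapted to $\Capis$.

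Once these are in place I would set $g(s):=\Capis(E_s)^{1/n}$ and read the chain as a self-improving estimate
$$
t\,g(s+t) \;\leq\; C_1\, g(s)^2, \qquad s\geq s^*,\ t\in(0,1],
$$
with $C_1$ depending only on $A,\delta,\omega,p$ and $M$ (the factor $e^{-As/n}$ being absorbed into $C_1$). A first application of (a)--(b) using the crude bound $\vol(E_s)\leq \vol(X)$ already gives $\Capis(E_s)\to 0$ exponentially in $s$, so one can choose $s_0$ with $g(s_0)\leq (2C_1)^{-1}$. Lemma \ref{lem: Kol tech} applied to the shifted function then forces $g(s)=0$ for all $s\geq s_0+2$; since $\Capis^*$ characterizes pluripolar sets while $\f\in\EcX$ does not charge pluripolar sets, this upgrades to $E_{s_0+2}=\emptyset$, i.e.\ $\f\geq \p - C$ with $C=s_0+2$.

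The main obstacle I expect is Step~(c): the volume--capacity comparison must be formulated for $\Capis$ with a possibly \emph{unbounded} weight $\p$, and with an exponent as large as $2p$. Here the strict inequality $\delta<1$ is essential, since it leaves room to use truncations $\max(\p,-k)$ as bounded admissible test functions for $\Capis$ and to derive the required quantitative estimate; this is precisely where the dependence of the final constant on $\delta$, $\omega$ and $p$ enters.
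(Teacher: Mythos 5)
Your proposal is correct and follows essentially the same route as the paper: the partial comparison principle for $\Capis$, H\"older's inequality combined with the density bound, a volume--capacity estimate, and the iteration Lemma \ref{lem: Kol tech}. The only step you flag as an obstacle, the bound $\vol(E)\leq C_0\,\Capis(E)^{2p}$, is obtained in the paper without any truncation argument, simply by combining the classical volume--capacity inequality of \cite{GZ05} for $\Capa_{(1-\delta)\omega}$ with the elementary domination $\Capa_{(1-\delta)\omega}\leq \Capis$ from \cite[Lemma 2.7]{DL14a}.
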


\begin{proof}
The proof follows the arguments in \cite{DL14a}. The goal is to prove that the function 
$$
H(t):=\left[\Capis (\f<\psi-t)\right]^{1/n}
$$
satisfies the inequality (\ref{ineq: Kol key}). Since $H(t) $ is right continuous and 
$H(t)$ goes to zero as $t$ goes to $+\infty$   (see \cite[Lemma 2.6]{DL14a}) 
we can conclude using Lemma \ref{lem: Kol tech} above.

Fix $s\in [0,1]$, $t>0$, then by \cite[Proposition 2.8]{DL14a} we get
$$
s^n\Capis (\f<\psi-t-s)\leq \int_{\{\f<\psi-t\}}\MA(\f).
$$
Using the assumption and H\"older's inequality we then have
\begin{eqnarray*}
\int_{\{\f<\psi-t\}}\MA(\f)&\leq & \int_{\{\f<\psi-t\}} e^{A\f-u}\, \omega^n\\
&=& \int_{\{\f<\psi-t\}} e^{A(\f-\psi)} e^{A\psi-u} \,\omega^n \\
&\leq & e^{-At} \vol(\{\f<\psi-t\})^{1/p}\left(\int_{\{\f<\psi-t\}} e^{q(A\psi-u)} \,\omega^n\right)^{1/q}.
\end{eqnarray*}
It follows from \cite{GZ05} that 
$$
\vol(\{\f<\psi-t\})^{1/p} \leq C(p,\omega,\delta,n) \left[\Capa_{(1-\delta)\omega}(\f<\psi-t)\right]^2.
$$
Thanks to \cite[Lemma 2.7]{DL14a} we also have 
$$
\Capa_{(1-\delta)\omega} \leq  \Capis.
$$
Putting all together yields
$$
sH(t+s)\leq C^{1/n} e^{-At/n} H(t)^2\leq B H(t)^2, 
$$
where $B=C^{1/n}$ and $C$ depends on $\omega,p,\delta,A$ and an upper bound for $\Vert e^{A\p -u}\Vert_{L^q(X)}$. Now, choose $t_{\infty}>0$ such that 
$$
2C^{2/n}e^{-At_{\infty}/n}\vol(X)^{2/n}<1.
$$
We then have $H(t_{\infty}+1) <(2B)^{-1}$. It then follows from Lemma \ref{lem: Kol tech} that
$$
\f \geq \p -t_{\infty}-3. 
$$
\end{proof}

\subsection{The $\Cc^2$ estimate}
We recall the following well-known result (see \cite{Yau} for a proof).
\begin{lem}\label{lem: AM-GM}
Let $\alpha, \beta$ be positive $(1,1)$-forms. Then
$$
n \left(\frac{\alpha^n}{\beta^n}\right)^{\frac{1}{n}}\leq \tr_{\beta}(\alpha)\leq n \left(\frac{\alpha^n}{\beta^n}\right)\cdot \left( \tr_{\alpha}(\beta)\right)^{n-1}. 
$$
\end{lem}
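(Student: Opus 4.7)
The plan is to reduce the inequality, which is pointwise and tensorial, to a statement about the eigenvalues of $\alpha$ relative to $\beta$ and then invoke standard AM--GM type arguments. I would begin by fixing an arbitrary point $x\in X$ and choosing a basis of $T_x^{1,0}X$ that simultaneously diagonalizes the two Hermitian forms: since $\beta$ is positive it defines a Hermitian inner product, and $\alpha$ can then be diagonalized with respect to $\beta$. In such a basis $\beta$ corresponds to the identity matrix and $\alpha$ to a diagonal matrix with positive entries $\lambda_1,\ldots,\lambda_n>0$, which yields the identifications
\[
\tr_{\beta}(\alpha)=\sum_{i=1}^n \lambda_i,\qquad \frac{\alpha^n}{\beta^n}=\prod_{i=1}^n\lambda_i,\qquad \tr_{\alpha}(\beta)=\sum_{i=1}^n \frac{1}{\lambda_i}.
\]
Thus the lemma is equivalent to the double inequality
\[
n\Bigl(\prod_{i=1}^n \lambda_i\Bigr)^{1/n}\ \leq\ \sum_{i=1}^n \lambda_i\ \leq\ n\Bigl(\prod_{i=1}^n \lambda_i\Bigr)\Bigl(\sum_{i=1}^n \frac{1}{\lambda_i}\Bigr)^{n-1}
\]
for arbitrary positive reals $\lambda_i$.

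The left-hand inequality is exactly the classical arithmetic-geometric mean inequality applied to the $\lambda_i$'s, so nothing needs to be checked there.

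For the right-hand inequality, I would rewrite each eigenvalue via the identity $\lambda_i=\bigl(\prod_j\lambda_j\bigr)\prod_{k\neq i}\lambda_k^{-1}$, which gives
\[
\sum_{i=1}^n\lambda_i=\Bigl(\prod_{j=1}^n\lambda_j\Bigr)\sum_{i=1}^n \prod_{k\neq i}\frac{1}{\lambda_k}.
\]
Applying AM--GM to each factor $\prod_{k\neq i}\lambda_k^{-1}$ yields
\[
\prod_{k\neq i}\frac{1}{\lambda_k}\leq\Bigl(\frac{1}{n-1}\sum_{k\neq i}\frac{1}{\lambda_k}\Bigr)^{n-1}\leq\Bigl(\frac{1}{n-1}\sum_{k=1}^n\frac{1}{\lambda_k}\Bigr)^{n-1},
\]
and summing over $i$ and using $(n-1)^{n-1}\geq 1$ for $n\geq 1$ gives
\[
\sum_{i=1}^n \lambda_i\leq \frac{n}{(n-1)^{n-1}}\,\Bigl(\prod_{j=1}^n\lambda_j\Bigr)\Bigl(\sum_{k=1}^n\frac{1}{\lambda_k}\Bigr)^{n-1}\leq n\Bigl(\prod_{j=1}^n\lambda_j\Bigr)\Bigl(\sum_{k=1}^n\frac{1}{\lambda_k}\Bigr)^{n-1}.
\]
Since everything is pointwise in $x$, the lemma follows. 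There is no real obstacle here: the whole proof is purely algebraic once one performs the simultaneous diagonalization, and the only mild subtlety is keeping track of where the factor $n$ on the right-hand side comes from, which is precisely the count of $i$'s when one applies AM--GM term by term.
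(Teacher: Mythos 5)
Your proof is correct: the simultaneous diagonalization reduces the lemma to the eigenvalue inequalities you state, the left one is AM--GM, and your algebraic manipulation for the right one is valid (for $n\geq 2$; the case $n=1$ is trivial since the empty product is $1$). The paper itself gives no argument and simply refers to \cite{Yau}, and your proof is essentially the standard one found there, so there is nothing to compare beyond noting that one can shortcut your AM--GM step by bounding each factor via $\lambda_k^{-1}\leq\sum_j\lambda_j^{-1}$ directly.
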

\noindent Recall that $(\f_t)_{0\leq t<T_{\max}}$ is a smooth solution of the parabolic Monge-Amp\`ere equation (\ref{eq: parabolic}) starting from a smooth strictly $\omega$-psh function $\f_0$.\\

\noindent We use $\Delta_t$ to denote the Laplacian with respect to the K\"ahler metric $\omega_t$ which is defined by
$$
\omega_t:=\theta_t + dd^c \f_t.
$$

\noindent In order to establish the $\Cc^2$ estimates we need a lower bound for $\dot{\f_t}$. Let us stress that during the proof of our estimates, the constants $C_1,C_2...$ stand for various positive constants which are under control. 

\begin{prop}\label{prop: bound below dot}
For all $(t,x)\in (\vep,T]\times X$,
$$
\dot{\f}_t(x)\geq n\log (t-\vep)+ A(\Psi_t-\f_t)(x) -C,
$$
where  $A, C$ are positive constants depending on $\vep$, an upper bound for $E_1,E_2$ and our fixed parameters. Here, $\Psi_t$ is defined by
$$
\Psi_t=\left(1-\frac{t}{2S}\right)\p.
$$ 
\end{prop}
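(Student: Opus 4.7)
The plan is to apply the parabolic maximum principle to the test function
$$G(t,x) := \dot\f_t(x) + A(\f_t - \Psi_t)(x) - n\log(t-\vep),$$
defined on $(\vep, T] \times X$, where $A \gg 1$ is a constant to be chosen. Proving the lower bound $G \geq -C$ is manifestly equivalent to the claimed inequality. The crucial feature of $G$ is that it tends to $+\infty$ both as $t \to \vep^+$ (thanks to the $-n\log(t-\vep)$ term) and as $x$ approaches the polar set $D$ of $\p$ (since $\Psi_t = (1 - t/(2S))\p \equiv -\infty$ on $D$, while $\dot\f_t$ and $\f_t$ are smooth and bounded). Therefore the infimum of $G$ is attained at some interior point $(t_0, x_0)$ with $t_0 > \vep$ and $x_0 \in X \setminus D$, where $G$ is smooth and the classical parabolic maximum principle yields $(\partial_t - \Delta_t) G(t_0, x_0) \leq 0$.

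Next I would compute $(\partial_t - \Delta_t) G$ explicitly. Using the identity $(\partial_t - \Delta_t)\dot\f_t = \tr_{\omega_t}\chi$, obtained by differentiating (\ref{eq: parabolic}) in time, together with $\Delta_t \f_t = n - \tr_{\omega_t}\theta_t$ and $\Delta_t \Psi_t = \tr_{\omega_t}(\theta_t + dd^c\Psi_t) - \tr_{\omega_t}\theta_t$, a direct computation gives
$$(\partial_t - \Delta_t) G \;=\; \tr_{\omega_t}\chi + A\tr_{\omega_t}(\theta_t + dd^c\Psi_t) + A\dot\f_t + \frac{A\p}{2S} - An - \frac{n}{t-\vep}.$$
Here the key positivity is $\theta_t + dd^c\Psi_t \geq (t/6S)\omega \geq (\vep/6S)\omega$ for $t \in [\vep, T]$, which follows from $\theta_t \geq (1-t/(3S))\omega$ and $\p \in \psh(X,\omega)$. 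Combined with $|\tr_{\omega_t}\chi| \leq B\tr_{\omega_t}\omega$ for a uniform $B$, choosing $A$ large (depending on $\vep, S$) gives $\tr_{\omega_t}\chi + A\tr_{\omega_t}(\theta_t + dd^c\Psi_t) \geq (Ac_0/2)\tr_{\omega_t}\omega$ with $c_0 = \vep/(6S)$. The AM--GM inequality of Lemma \ref{lem: AM-GM} then furnishes $\tr_{\omega_t}\omega \geq n e^{-\dot\f_t/n}$.

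Feeding all this into $(\partial_t - \Delta_t) G \leq 0$ at $(t_0, x_0)$ yields
$$\tfrac{A c_0 n}{2}\, e^{-\dot\f_t(t_0,x_0)/n} \;\leq\; An + \frac{n}{t_0-\vep} - A\dot\f_t(t_0,x_0) - \frac{A\p(x_0)}{2S}.$$
Since the left side grows exponentially in $-\dot\f_t$ while the right side is at most linear in $-\dot\f_t$ and in $|\p(x_0)|$, this forces $\dot\f_t(t_0,x_0) \geq -n\log(1+|\p(x_0)|) - C_1$ for some uniform $C_1$. I would then invoke the $\Cc^0$ estimate of Theorem \ref{thm: C0-estimate}, which provides $\f_t(t_0,x_0) \geq (1-t_0/(2T))\p(x_0) - C$. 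Exploiting the strict inequality $T < S$ (so that $1/(2T) - 1/(2S) > 0$) and the normalization $\p\leq 0$, a short rearrangement gives
$$A\bigl(\f_t - \Psi_t\bigr)(t_0,x_0) \;\geq\; A\vep\,|\p(x_0)|\left(\frac{1}{2T} - \frac{1}{2S}\right) - AC,$$
which is \emph{linear} with positive coefficient in $|\p(x_0)|$ and therefore dominates the logarithmic defect $-n\log(1+|\p(x_0)|)$ coming from the bound on $\dot\f_t$. Consequently $G(t_0,x_0) \geq -C'$ for a uniform constant $C'$, hence $G \geq -C'$ throughout $(\vep,T]\times X$, which is exactly the stated inequality.

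The main obstacle I expect is the careful calibration of $A$: it must simultaneously be large enough that the positivity of $A\tr_{\omega_t}(\theta_t + dd^c\Psi_t)$ absorbs the term $\tr_{\omega_t}\chi$, yet be balanced against the unfavourably-signed term $A\p/(2S)$ appearing at the minimum point. The reason the argument ultimately closes is not a clever choice of $A$, but the \emph{structural} fact that the $\Cc^0$ bound of Theorem \ref{thm: C0-estimate} yields linear-in-$\p$ growth for $A(\f_t - \Psi_t)$, while the max-principle differential inequality only loses a logarithmic amount in $|\p(x_0)|$. A secondary technical point is to verify that the approximating smoothness of $\f_t$ (coming from the smooth $\f_0$ in this section) is enough to apply the classical maximum principle at $(t_0,x_0)$; this is why the estimates need to be stated with constants depending only on the upper bounds $E_1,E_2$ of Subsection 3.1, so that they pass to the limit in $j$.
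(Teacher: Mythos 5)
Your proposal follows essentially the same route as the paper's proof: the same test function $G=\dot{\f}_t+A(\f_t-\Psi_t)-n\log(t-\vep)$ which blows up as $t\to\vep^+$ and near the polar set of $\p$, the same positivity $\theta_t+dd^c\Psi_t\ge\frac{\vep}{6S}\omega$ used to absorb $\tr_{\omega_t}(\chi)$ for $A$ large, the AM--GM bound $\tr_{\omega_t}(\omega)\ge n e^{-\dot{\f}_t/n}$ at the minimum point, and the decisive use of Theorem \ref{thm: C0-estimate} with $T<S$ to gain a term linear in $|\p(x_0)|$ dominating the logarithmic loss. The only slip is the intermediate claim $\dot{\f}_t(t_0,x_0)\ge -n\log(1+|\p(x_0)|)-C_1$ with $C_1$ uniform, which drops the $n/(t_0-\vep)$ term (not uniformly bounded when $t_0$ is close to $\vep$); the correct bound carries an extra $+n\log(t_0-\vep)$, which is precisely cancelled by the $-n\log(t_0-\vep)$ term in $G$ --- exactly how the paper folds $(t_0-\vep)$ inside the logarithm --- so the conclusion is unaffected.
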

\begin{proof}
Consider the following  function $G$ defined on $(\vep,T]\times X$ by
$$
G(t,x):= \dot{\f}_t(x) +A(\f_t-\Psi_t) - n\log (t-\vep), \ \ \Psi_t:= \left(1-\frac{t}{2S}\right)\p,
$$
where $A$ is a positive constant to be specified later.
Observe that $G$ is smooth on $(\vep,T]\times X_0$, where
$$
X_0:=\{x\in X \setdef \p(x)>-\infty\}.
$$ 
In the sequel, all computations take place in $(\vep,T]\times X_0$. We compute 
$$
\left(\frac{\partial}{\partial t}-\Delta_t \right) G =   A\dot{\f}_t - \frac{n}{t-\vep} +A\frac{\p}{2S} -nA +  A\tr_{\omega_t}(\theta_t +dd^c \Psi_t) +\tr_{\omega_t}(\chi). 
$$
It follows from
$$
\theta_{t} \geq \left(1-\frac{t}{3S}\right)\omega \ \ \mbox{and}\ \  dd^c \Psi_t+ \left(1-\frac{t}{2S}\right)\omega \geq 0  
$$
that 
$$
\theta_t + dd^c \Psi_t \geq \frac{t\omega}{6S}\geq \frac{
\vep \omega}{6S} .
$$
We choose $A>0$ big enough so that 
$$
\frac{A\vep \omega}{6S} + \chi \geq \omega.  
$$
Thus we have
\begin{equation}
\label{eq: C2 est 1}
\left(\frac{\partial}{\partial t}-\Delta_t \right) G \geq   A\dot{\f}_t - \frac{n}{t-\vep} -nA + A\frac{\p}{2S} +  \tr_{\omega_t}(\omega). 
\end{equation}
It follows from Lemma \ref{lem: AM-GM} that 
\begin{equation}
\label{eq: C2 est 2} 
\tr_{\omega_t}(\omega) \geq n \left(\frac{\omega^n}{\omega_t^n}\right)^{1/n} = n\exp{\left(\frac{-\dot{\f_t}}{n}\right)}.
\end{equation}
It is elementary that for each small constant  $b>0$ and each big constant $B>0$  there exists a constant $B'$ depending only on $b$ and $B$ such that
$$
b e^{x} -Bx \geq- B', \ \ \forall x\in \R.
$$
Using this observation, we can find positive constants $C_1, C_2$ under control such that 
\begin{equation}
\label{eq: C2 est 3}
A\dot{\f}_t + n \exp{\left(\frac{-\dot{\f_t}}{n}\right)} \geq \exp{\left(\frac{-\dot{\f_t}}{n}-C_1\right)}-C_2.
\end{equation}
From (\ref{eq: C2 est 1}), (\ref{eq: C2 est 2}) and  (\ref{eq: C2 est 3}) we  deduce   that  
\begin{equation}
\label{eq: C2 est 4}
\left(\frac{\partial}{\partial t}-\Delta_t \right) G \geq   \exp{\left(\frac{-\dot{\f_t}}{n}-C_1\right)} + A\frac{\p}{2S}  - \frac{n}{t-\vep} -C_2 -An. 
\end{equation}
Notice that the function $G$ is smooth on $(\vep,T]\times X_0$ and it goes to $+\infty$ on the boundary. Thus we see that $G$ attains its minimum on $(\vep,T]\times X_0$ at some point $(t_0,x_0)\in  (\vep,T]\times X_0$. It follows from the minimum principle and (\ref{eq: C2 est 4}) that, at $(t_0,x_0)$, we have
$$
\dot{\f}_{t} \geq -n \log \left( C_2 + \frac{n}{t_0-\vep} - A\frac{\p}{2S} +An\right) -n C_1.
$$
We then get 
\begin{eqnarray*}
G(t_0,x_0) &\geq & -C_3 - n\log \left[C_5(t_0-\vep)+ n-\frac{A(t_0-\vep)\p(x_0)}{2S} \right] + A(\f_{t_0}-\Psi_{t_0})\\
&\geq & -C_4 - n\log \left[C_5(t_0-\vep)+ n-\frac{A(t_0-\vep)\p(x_0)}{2S}\right] - \delta\p(x_0),
\end{eqnarray*}
where $\delta= \frac{At_0(S-T)}{2TS}>0$ and the last inequality follows from Theorem \ref{thm: C0-estimate}. We then obtain a uniform lower bound for $G(t_0,x_0)$, and hence the result follows.  
\end{proof}

\begin{thm}
\label{thm: C2 est}
Fix $\vep>\vep_0$. For all $x\in X$ and $t\in [\vep,T]$ one has
$$
0\leq (t-\vep)\log \tr_{\omega}(\omega_{t})\leq  -A \p + C,
$$
where $A,C$ are  positive constants depending on $\vep$, an upper bound for $E_1,E_2$ and our fixed parameters.    
\end{thm}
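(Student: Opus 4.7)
The plan is to apply a parabolic maximum principle to the auxiliary function
$$
H(t,x) := (t-\vep)\log\tr_\omega\omega_t + K\bigl(\Psi_t - \f_t\bigr)(x), \qquad \Psi_t := \Bigl(1-\frac{t}{2S}\Bigr)\p,
$$
where $K>0$ is a large constant to be fixed. Since $\p$ is smooth on $X\setminus D$ and $\Psi_t\to -\infty$ along $D$, the function $H$ is smooth on $(\vep,T]\times(X\setminus D)$ and extends by $-\infty$ across $D$; it therefore attains its maximum on $[\vep,T]\times X$ at some $(t_0,x_0)$ with $x_0\in X\setminus D$. If $t_0=\vep$, then $H(\vep,x_0)$ is bounded above using Theorem \ref{thm: C0-estimate} together with the inequality $S>T$, and the desired bound follows. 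Otherwise $t_0\in(\vep,T]$ and the maximum principle yields $(\partial_t-\Delta_t)H(t_0,x_0)\geq 0$.

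\medskip

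Expanding this inequality rests on three standard ingredients: the Yau-type trace estimate $(\partial_t-\Delta_t)\log\tr_\omega\omega_t \leq C_1 \tr_{\omega_t}\omega$, with $C_1$ controlled by the bisectional curvature of $\omega$ and $\sup_X|\chi|_\omega$; the direct identity $(\partial_t-\Delta_t)\f_t = \dot{\f}_t - n + \tr_{\omega_t}\theta_t$; and the positivity $\theta_t+dd^c\Psi_t\geq (t/(6S))\omega$ from Proposition \ref{prop: bound below dot}, which yields
$$
(\partial_t-\Delta_t)\Psi_t \leq -\frac{\p}{2S} - \frac{t}{6S}\tr_{\omega_t}\omega + \tr_{\omega_t}\theta_t.
$$
The $\tr_{\omega_t}\theta_t$ contributions cancel upon combination, leaving at $(t_0,x_0)$
$$
0 \leq \log\tr_\omega\omega_t + \Bigl[(t-\vep)C_1 - \frac{Kt}{6S}\Bigr]\tr_{\omega_t}\omega - K\dot{\f}_t - \frac{K\p(x_0)}{2S} + Kn.
$$

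\medskip

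Choosing $K\geq 12STC_1/\vep$ forces the bracketed coefficient to be $\leq -K\vep/(12S)$. Substituting the lower bound for $\dot{\f}_t$ from Proposition \ref{prop: bound below dot}, the upper bound $\f_t\leq \sup_X\f_0+(n\log 2)T$ from Lemma \ref{lem: upper bound varphi_t}, the normalisation $\p\leq 0$, and absorbing $\log\tr_\omega\omega_t$ via Lemma \ref{lem: AM-GM} (that is, $\log\tr_\omega\omega_t \leq \log n + \dot{\f}_t + (n-1)\log\tr_{\omega_t}\omega$) followed by $\log x\leq \eta x+C(\eta)$ with $\eta$ sufficiently small, produces a bound
$$
\tr_{\omega_t}\omega(x_0) \leq C_2 - C_3\log(t_0-\vep) + C_4|\p(x_0)|.
$$
Plugging this into $\tr_\omega\omega_t \leq n e^{\dot{\f}_t}(\tr_{\omega_t}\omega)^{n-1}$ (Lemma \ref{lem: AM-GM} again), multiplying by $(t_0-\vep)$, and controlling $(t_0-\vep)\dot{\f}_t$ through the derivative estimate $\dot{\f}_t\leq (\f_t-\f_0)/t+n$ combined with Theorem \ref{thm: C0-estimate}, yields $(t_0-\vep)\log\tr_\omega\omega_t(x_0) \leq C-K'\p(x_0)$. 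Since $H(t,x)\leq H(t_0,x_0)$ on $[\vep,T]\times X$ and $\Psi_t\leq 0$, $\f_t\leq M$, the theorem follows.

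\medskip

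The main technical difficulty is the simultaneous absorption of the three potentially unbounded quantities -- $\log\tr_\omega\omega_t$, $-K\dot{\f}_t$, and the residual $-\log(t_0-\vep)$ coming from Proposition \ref{prop: bound below dot} -- into the dominant $-(K\vep/(12S))\tr_{\omega_t}\omega$ term, while keeping the final bound linear in $-\p$. The factor $(t-\vep)$ multiplying $\log\tr_\omega\omega_t$ in the definition of $H$ is essential: it cures the blow-up at $t=\vep$ and makes the estimate uniform on $[\vep,T]$, despite $\dot{\f}_t$ not being uniformly bounded below as $t\downarrow\vep$.
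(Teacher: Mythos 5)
Your setup coincides with the paper's: the same test function $H=(t-\vep)\log\tr_{\omega}(\omega_t)-K(\f_t-\Psi_t)$ with $\Psi_t=(1-\frac{t}{2S})\p$, the same localisation on $X\setminus D$ (where $\Psi_t\to-\infty$, so the maximum is attained off $D$), the same evolution inequalities, and the same choice of $K$ large to make the trace coefficient strictly negative. The gap is in the endgame. To control $(t_0-\vep)\dot{\f}_{t_0}(x_0)$ you invoke $\dot{\f}_t\leq(\f_t-\f_0)/t+n$. This brings in $-\f_0(x_0)$, which is \emph{not} bounded by $C-K'\p(x_0)$ with constants depending only on $\vep$, $E_1$, $E_2$ and the fixed parameters: by construction (Lemma \ref{lem: Dem}) $\p$ is less singular than $\f_0$, the two are related only through the integral quantity $E_1$, and in the a priori setting $\f_0$ really stands for the approximant $\f_{0,j}$, whose infimum blows up as $j\to+\infty$ at poles of $\f_0$ lying outside $D$. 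So the claimed inequality $(t_0-\vep)\log\tr_{\omega}(\omega_{t_0})(x_0)\leq C-K'\p(x_0)$ does not follow from the cited ingredients; the constant would depend on $\inf_X\f_{0,j}$ and the estimate would not be uniform in the approximation. The fix is the paper's: read the upper bound on $\dot{\f}_{t_0}(x_0)$ off the maximum-principle inequality itself by discarding the nonnegative trace term, which gives $\dot{\f}_{t_0}\leq \frac{C}{K-1}-\frac{K\p(x_0)}{2S(K-1)}$, linear in $-\p(x_0)$ with a small coefficient and with no reference to $\f_0$; this also makes Proposition \ref{prop: bound below dot} unnecessary in this proof, since the lower-bound control on $\dot{\f}_t$ at the maximum comes for free from $\tr_{\omega_t}(\omega)\geq n e^{-\dot{\f}_t/n}$ inserted into the same inequality.

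Second, even granting a bound of the form $(t_0-\vep)\log\tr_{\omega}(\omega_{t_0})(x_0)\leq C-K'\p(x_0)$, your final sentence skips the decisive balancing act: you must show $H(t_0,x_0)\leq C$ with $C$ \emph{uniform}, otherwise the transfer $H(t,x)\leq H(t_0,x_0)$ leaves the uncontrolled term $-K'\p(x_0)$ evaluated at the unknown maximum point, and the conclusion $\leq -A\p(x)+C$ does not follow. Concretely one needs $K(\Psi_{t_0}-\f_{t_0})(x_0)\leq -K\vep\bigl(\frac{1}{2T}-\frac{1}{2S}\bigr)\,|\p(x_0)|+KC$ (by Theorem \ref{thm: C0-estimate}, $t_0\geq\vep$ and $\p\leq 0$) to dominate $+K'|\p(x_0)|$, i.e. $K'<K\vep\bigl(\frac{1}{2T}-\frac{1}{2S}\bigr)$, which uses $S>T$ and a further adjustment of $K$; this is precisely the step the paper encodes by increasing $A$ so that $\delta=\frac{\vep}{2T}-\frac{\vep}{2S}-\frac{T}{2(A-1)S}>0$, followed by absorbing the residual logarithmic term via $-\vep x+\log x\leq C$. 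With $K'$ produced by your route through $-\f_0$ this inequality cannot be arranged at all; with the paper's bound on $\dot{\f}_{t_0}$ it can, and the argument then closes as you intend.
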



\begin{proof}
Consider 
$$
H:= (t-\vep)\log \tr_{\omega}(\omega_t)-A(\f_{t}-\Psi_t), t\in [\vep,T] , x\in X,
$$
where $A$ is a positive constant to be specified later and the function $\Psi_t$ is defined by 
$$
\Psi_t:= \left(1-\frac{t}{2S}\right)\p.
$$  
Notice that $H$ is smooth on $[\vep,T]\times X_0$ and $H$ goes to $-\infty$ on the boundary of $X_0$, where 
$$
X_0:=\{x\in X \setdef \p(x)>-\infty\}.
$$
Set $u=\tr_{\omega}(\omega_t)$. We compute
$$
\left(\frac{\partial }{\partial t}-\Delta_t \right)H= \log u +\frac{t-\vep}{u}\frac{\partial u}{\partial t}-
A\dot{\f}_{t}-A\frac{\p}{2S}- (t-\vep) \Delta_t \log u + A \Delta_t (\f_{t}- \Psi_t).
$$
As in the proof of Proposition \ref{prop: bound below dot}  we have
$$
\tr_{\omega_t}(\theta_t + dd^c \Psi_t) \geq \frac{\vep}{6S} \tr_{\omega_t}(\omega). 
$$
Thus we obtain
$$
\Delta_t( \f_t-\Psi_t)\leq n- \frac{\vep}{6S}\tr_{\omega_t}(\omega).
$$
Thanks to \cite[Lemma 2.2]{CGP13} (which is an improvement of \cite{Siu87}) we also have
$$
\Delta_t \log u \geq \frac{\tr_\omega (dd^c \dot{\f_t})}{\tr_\omega(\omega_{t})}-B\,\tr_{\omega_{t}}(\omega),
$$
where $B$ depends only on a lower bound for the holomorphic bisectional curvature of $\omega$. Furthermore,
$$
\frac{t-\vep}{u}\frac{\partial u}{\partial t} =\frac{t-\vep}{u} \left(\tr_\omega (\chi)+ \tr_\omega (dd^c \dot{\f_t}) \right) \leq \frac{t-\vep}{u}\tr_\omega (dd^c \dot{\f_t})+C_1(t-\vep) \,\tr_{\omega_{t}}(\omega)
$$
where in the last inequality we use the fact that $\tr_\omega (\chi)$ is bounded from above together with the trivial inequality $n\leq \tr_{\omega_{t}}(\omega)\tr_{\omega}(\omega_{t})$.
Thus
$$
\frac{t-\vep}{u}\frac{\partial u}{\partial t}-(t-\vep)\Delta_t \log u \leq (B+C_1)(t-\vep) \,\tr_{\omega_{t}}(\omega).
$$
It follows from Lemma \ref{lem: AM-GM} applied to $\alpha=\omega_{t}$ and $\beta=\omega$ that
$$
\tr_{\omega}(\omega_{t})\leq n\, e^{\dot{\f_t}}\left( \tr_{\omega_{t}}(\omega)\right)^{n-1}.
$$
Using the inequality $(n-1)\log x < x+C_n$, we obtain
$$\log u \leq \log n  + \dot{\f_t} +(n-1)\log  \tr_{\omega_{t}}(\omega) \leq \dot{\f_t}+\tr_{\omega_{t}}(\omega)+C_2.
$$
Thus we have
$$
\left(\frac{\partial }{\partial t}-\Delta_t \right)H \leq -(A-1)\dot{\f}_{t}+\left[(B+C_1)(t-\vep)+1-\frac{A\vep}{6S}\right]\tr_{\omega_{t}}(\omega)-\frac{A\p}{2S}+C_3.
$$
We will choose $A>0$ big enough such that 
$$
(B+C_1)T+1-\frac{A\vep}{6S}\leq -1.
$$ 
Assume now that $H$ achieves its maximum at some point $(t_0,x_0)\in  [\vep,T]\times X_0$. If $t_0=\vep$ then we are done. Assume that $t_0>\vep$.  From now on our computations take place at this maximum point. 
By the maximum principle we have
$$
0\leq \left(\frac{\partial }{\partial t}-\Delta_t \right)H \leq C_3-(A-1)\dot{\f_t} -\frac{A\p}{2S}- \tr_{\omega_t}(\omega),
$$
or, equivalently
\begin{equation}
\label{eq: C2 est 5}
\tr_{\omega_t}(\omega)\leq C_3-(A-1)\dot{\f_t}- \frac{A\p}{2S}.
\end{equation}
In particular, since $\tr_{\omega_t}(\omega)\geq 0$, we have the following upper bound for $\dot{\f_t}$:
\begin{equation}\label{eq: C2 est 6}
\dot{\f_t}\leq \frac{C_3}{A-1} - \frac{A\p}{2S(A-1)}.
\end{equation}
It follows from Lemma \ref{lem: AM-GM} that 
$$
\tr_{\omega_t}(\omega) \geq n \exp\left(\frac{-\dot{\f_t}}{n}\right).
$$
Plugging this into (\ref{eq: C2 est 5}) we obtain
\begin{equation}
\label{eq: C2 est 7}
\tr_{\omega_t}(\omega)\leq C_4- \frac{A\p}{4S}.
\end{equation}
Thus applying again Lemma \ref{lem: AM-GM} yields
$$
 \log u  \leq \log n + \dot{\f_t} + (n-1)\log \left[C_4 - \frac{A\p}{4S}\right]. 
$$
From this and from (\ref{eq: C2 est 6}) we obtain
$$
H \leq C_5 -A\left[\f_t -\left(1-\frac{t}{2S}-\frac{t-\vep}{2(A-1)S} \right)\p\right] 
+(n-1)(t-\vep)\log \left[C_4-\frac{A\p}{4S} \right].
$$
Now, we increase $A$ a little bit, if necessary,  so that 
$$
\delta:= \frac{\vep}{2T}-\frac{\vep}{2S}-\frac{T}{2(A-1)S}>0 .
$$
Thus since  $\p\leq 0$ we obtain
$$
H \leq C_5 -A\left[\f_t -\left(1-\frac{t}{2T} \right)\p\right] + A\delta \p
+(n-1)(t-\vep) \log \left[C_4 - \frac{A\p}{4S}\right].
$$
The second term is uniformly bounded from above thanks to Theorem \ref{thm: C0-estimate}. Now, using the obvious inequality $-\vep x + \log x \leq C$  we obtain a uniform upper bound for $H$ at the point 
$(t_0,x_0)$ and the result follows. 
\end{proof}

\subsection{More estimates}
In this subsection we derive some other a priori  estimates for the solutions of the parabolic equation (\ref{eq: parabolic}). These estimates will be used later to prove stronger convergence of the flow at zero when the initial data has some regularity properties (Theorems \ref{thm: convergence at zero smooth case} and \ref{thm: cvg zero exp}). The setting and the notations will be the same as those in the previous section. 
\begin{prop}
\label{prop: est dot more}
Assume that there exist smooth $\omega$-plurisubharmonic functions $\phi_1,\phi_2$ and  positive constants $0<\delta<1/2$, $C_1>0$ such that  
$$
\dot{\f_0} \geq C_1 \phi_1 \ \ {\rm and} \ \ \f_0 \geq \delta \phi_2.
$$
Then there exists a uniform constant $C_2$ depending on $C_1$ and our fixed parameters such that 
$$
\dot{\f_t} \geq C_2(\phi_2 + 1) + C_1 \phi_1.  
$$
\end{prop}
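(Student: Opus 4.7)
The plan is to apply a parabolic maximum principle to a carefully designed auxiliary function, in the same spirit as the proof of Proposition \ref{prop: bound below dot}. The two initial hypotheses $\dot{\f_0} \geq C_1 \phi_1$ and $\f_0 \geq \delta \phi_2$ dictate the shape of the auxiliary function, namely
$$
G(t,x) := \dot{\f_t}(x) - C_1 \phi_1(x) + A\bigl(\f_t(x) - \delta \phi_2(x)\bigr),
$$
where $A > 0$ is a large constant to be fixed in terms of $C_1$ and the fixed geometric parameters. The hypotheses immediately give $G(0, \cdot) \geq 0$.

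Next I would compute $(\partial_t - \Delta_t) G$ using the standard identities $(\partial_t - \Delta_t)\dot{\f_t} = \tr_{\omega_t}(\chi)$ and $(\partial_t - \Delta_t)\f_t = \dot{\f_t} - n + \tr_{\omega_t}(\theta_t)$, together with $\phi_1, \phi_2 \in \psh(X,\omega)$. The positive gain comes from the bound $\theta_t + \delta\, dd^c \phi_2 \geq (\tfrac{2}{3} - \delta) \omega \geq \tfrac{1}{6}\omega$, where the condition $\delta < 1/2$ combined with $\theta_t \geq \tfrac{2}{3}\omega$ is essential; this produces $A \tr_{\omega_t}(\theta_t + \delta dd^c \phi_2) \geq \tfrac{A}{6}\tr_{\omega_t}(\omega)$. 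Combined with $\tr_{\omega_t}(\chi) \geq -B_1 \tr_{\omega_t}(\omega)$ (with $B_1$ bounding $-\chi$ from below by $-B_1 \omega$) and $C_1 \Delta_t \phi_1 \geq -C_1 \tr_{\omega_t}(\omega)$, and taking $A \geq 6(B_1 + C_1 + 1)$, one arrives at
$$
\left(\frac{\partial}{\partial t} - \Delta_t\right) G \;\geq\; \tr_{\omega_t}(\omega) + A \dot{\f_t} - An \;\geq\; n e^{-\dot{\f_t}/n} + A\dot{\f_t} - An,
$$
where the last step uses the AM-GM inequality of Lemma \ref{lem: AM-GM}.

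At a minimum point $(t_0, x_0)$ of $G$ with $t_0 > 0$, the left-hand side is non-positive, which pins $n e^{-\dot{\f_t}(t_0, x_0)/n} \leq An$, and therefore $\dot{\f_t}(t_0, x_0) \geq -n \log A$. Combining this with Lemma \ref{lem: upper bound varphi_t} (the uniform upper bound for $\f_t$), the boundedness of the smooth functions $\phi_1, \phi_2$ on $X$, and the initial condition $G(0, \cdot) \geq 0$, one deduces a uniform lower bound $G \geq -D$ on $[0, T] \times X$. Unwinding the definition of $G$ and absorbing the term $-A\f_t$ via Lemma \ref{lem: upper bound varphi_t} then produces an inequality of the form $\dot{\f_t} \geq C_1 \phi_1 + A\delta \phi_2 - K$ with $K$ uniform, which after setting $C_2 := A\delta$ and the customary normalization of $\phi_2$ yields the claim in the stated form $C_2(\phi_2 + 1) + C_1 \phi_1$.

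The main technical subtlety is the calibration of $A$, so that the positive contribution $A/6$ arising from $\delta < 1/2$ dominates both the curvature defect $B_1$ coming from $\chi$ and the loss $C_1$ coming from the $\phi_1$-correction in the differential inequality for $G$; this is essentially a bookkeeping of constants, and the remaining work (parabolic maximum principle plus AM-GM) is standard.
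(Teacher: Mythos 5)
Your strategy is the same as the paper's: the identical test function $G=\dot{\f_t}-C_1\phi_1+A(\f_t-\delta\phi_2)$, the minimum principle, the gain $\tfrac{A}{6}\tr_{\omega_t}(\omega)$ coming from $\theta_t\geq \tfrac{2}{3}\omega$ and $\delta<1/2$, the curvature and $\phi_1$ losses absorbed by taking $A$ large, and Lemma \ref{lem: AM-GM} at the minimum point. Two steps are off as written, though both are repairable. First, from $0\geq A\dot{\f_t}-An+n e^{-\dot{\f_t}/n}$ at $(t_0,x_0)$ you cannot conclude $n e^{-\dot{\f_t}/n}\leq An$: the term $A\dot{\f_t}$ is negative precisely in the case of interest $\dot{\f_t}<0$, so it cannot be dropped. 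The correct (and standard) way, used in the paper and in Proposition \ref{prop: bound below dot}, is the inequality $e^{x}\geq Bx-C_B$, which still yields a uniform lower bound for $\dot{\f_t}(t_0,x_0)$, only with a different constant than $-n\log A$.

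Second, and more substantively, to convert this bound into a uniform lower bound for $G$ you must bound $A(\f_{t_0}-\delta\phi_2)(x_0)$ from below, and the ingredients you list --- the \emph{upper} bound of Lemma \ref{lem: upper bound varphi_t} and the boundedness of $\phi_1,\phi_2$ --- do not control the term $A\f_{t_0}(x_0)$ from below (recall the estimate must be uniform along the approximating flows). This is exactly where the hypothesis $\f_0\geq\delta\phi_2$ has to be propagated to positive times: the paper invokes $\f_t\geq\f_0-c(t)$ from \cite[Lemma 2.9]{GZ13}, so that $\f_t-\delta\phi_2\geq -c(t)$ with $c(t)$ under control; alternatively, since $\f_0\geq\delta\phi_2\geq-\delta\Vert\phi_2\Vert_{L^{\infty}}$, comparison with the trivial subsolution $\inf_X\f_0+nt\log(2/3)$ gives the same. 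In your proposal the hypothesis $\f_0\geq\delta\phi_2$ is only used at $t=0$ (to get $G(0,\cdot)\geq 0$), which handles the case $t_0=0$ but not an interior minimum. Once this propagated lower bound is inserted, your unwinding step (absorbing $-A\f_t$ via Lemma \ref{lem: upper bound varphi_t}) is fine and the argument coincides with the paper's.
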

\begin{proof}
Consider the following function
$$
H(t,x) := \dot{\f_t} -C_1 \phi_1 + A (\f_t - \delta \phi_2),
$$
where $A>0$ is a big constant to be specified later. 
We use the same arguments as in Proposition \ref{prop: bound below dot}. Since $H$ is smooth on $[0,T]\times X$ it attains a minimum at some point $(t_0,x_0)$. If $t_0$ is zero then we are done since $H(0,.)\geq 0$ by our construction. 

Assume now that $t_0>0$. We do our computation below at this minimum point $(t_0,x_0)$.  By the minimum principle we have 
$$
0\geq \left(\frac{\partial}{\partial t} - \Delta_t \right) H  \geq -An  + A \dot{\f_t} + \left(\frac{A}{6}-C_1-A_1\right)\tr_{\omega_t}(\omega). 
$$
Here we use the fact that $\theta_t\geq 2\omega/3$ and $\chi \geq -A_1\omega$, for some constant $A_1$ under control. Now, choose $A$ big enough such that $\frac{A}{6}-C_1-A_1>1$ and using Lemma \ref{lem: AM-GM} combined with the inequality $e^x\geq B x -C_B$, we obtain a  uniform lower bound for $\dot{\f}_{t_0}(x_0)$. The lower bound for $H(t_0,x_0)$ then follows observing that, by \cite[Lemma 2.9]{GZ13}, $\f_t\geq \f_0-c(t)$, and so $\f_t-\delta \phi_2\geq -c(t)$ where $c(t)\rightarrow 0$ as $t\rightarrow 0$.

\end{proof}

\begin{thm}
\label{thm: C2 est more}
Assume that there exist smooth $\omega$-plurisubharmonic function  $\phi_1,\phi_2$ and a positive constants $0<\delta<1/2$ such that 
$$
\Delta_{\omega}\f_0 \leq e^{-C_1\phi_1} \ \ \ \ {\rm and}\ \ \f_0\geq \delta \phi_2. 
$$ 
Then there exists a uniform constant $C_2>0$ such that
$$
\tr_{\omega} \omega_t \leq C (1+  e^{-C_1\phi_1-\delta \phi_2}), \ \forall t\in [0,T].
$$
\end{thm}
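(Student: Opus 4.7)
The plan is to follow the same maximum-principle philosophy as in Theorem~\ref{thm: C2 est}, but to design an auxiliary function that uses $\phi_1,\phi_2$ directly (instead of the singular function $\psi$) and that makes the $C^0$ and $C^2$ hypotheses on $\f_0$ give a clean bound at the boundary $t=0$. Normalizing $\phi_1,\phi_2\le 0$ by subtracting constants, I would set
\[
H(t,x):=\log\tr_{\omega}\omega_t+C_1\phi_1+\delta\phi_2-A(\f_t-\delta\phi_2),
\]
for a large constant $A>0$ to be chosen, and aim to show $\sup_{[0,T]\times X}H\le C$. At $t=0$, the hypothesis $\Delta_\omega\f_0\le e^{-C_1\phi_1}$ together with the elementary fact $n+e^{-C_1\phi_1}\le(n+1)e^{-C_1\phi_1}$ (valid because $-C_1\phi_1\ge 0$) gives $\log\tr_{\omega}\omega_0\le\log(n+1)-C_1\phi_1$, while the hypothesis $\f_0\ge\delta\phi_2$ yields $-A(\f_0-\delta\phi_2)\le 0$. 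Hence $H(0,\cdot)\le\log(n+1)$.

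Assume now that the maximum of $H$ on $[0,T]\times X$ is attained at an interior point $(t_0,x_0)$. I would compute $(\partial_t-\Delta_t)H$ exactly as in Theorem~\ref{thm: C2 est}: the Chern--Lu/Siu inequality, combined with the trivial bound $u\cdot\tr_{\omega_t}\omega\ge n$ used to absorb $\tr_\omega\chi/u$, produces $(\partial_t-\Delta_t)\log\tr_{\omega}\omega_t\le C_0\tr_{\omega_t}\omega$. The $\omega$-pshness of $\phi_i$ gives $-\Delta_t\phi_i\le\tr_{\omega_t}\omega$, and the identity $\Delta_t\f_t=n-\tr_{\omega_t}\theta_t$, together with $\theta_t\ge 2\omega/3$, yields the decisive negative term $-A(2/3-\delta)\tr_{\omega_t}\omega$. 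Since $\delta<1/2$, one has $2/3-\delta>1/6$, so choosing $A>6(C_0+C_1+1)$ one arrives at
\[
(\partial_t-\Delta_t)H\le -\tr_{\omega_t}\omega-A\dot\f_t+An.
\]
At $(t_0,x_0)$ the left-hand side is $\ge 0$, so $\tr_{\omega_{t_0}}\omega+A\dot\f_{t_0}(x_0)\le An$. Coupling this with Lemma~\ref{lem: AM-GM} (which gives $\tr_{\omega_t}\omega\ge n e^{-\dot\f_t/n}$) and setting $y=-\dot\f_{t_0}(x_0)$, the inequality $ne^{y/n}\le A(n+y)$ forces a uniform upper bound $y\le M$, hence $\dot\f_{t_0}(x_0)\ge -M$ and $\tr_{\omega_{t_0}}\omega\le A(n+M)$; the trivial $\dot\f_{t_0}(x_0)\le n$ from the same inequality, combined with $\tr_{\omega}\omega_t\le n e^{\dot\f_t}(\tr_{\omega_t}\omega)^{n-1}$ (the other half of Lemma~\ref{lem: AM-GM}), then bounds $\log\tr_{\omega}\omega_{t_0}$ uniformly. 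Since $\f_t$ is smooth on $[0,T]\times X$ and $\phi_1,\phi_2$ are bounded, every remaining term in $H(t_0,x_0)$ is uniformly bounded.

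Combining the boundary estimate with the interior maximum estimate yields $H\le C$ on $[0,T]\times X$. Rearranging gives $\log\tr_{\omega}\omega_t\le C+A\f_t-C_1\phi_1-(1+A)\delta\phi_2$; using the upper bound $\f_t\le\sup_X\f_0+(n\log 2)T$ from Lemma~\ref{lem: upper bound varphi_t} together with the smoothness (hence boundedness) of $\phi_2$, the redundant $-A\delta\phi_2$ is absorbed into the constant, producing $\log\tr_{\omega}\omega_t\le C'-C_1\phi_1-\delta\phi_2$, which is equivalent to the claimed bound $\tr_\omega\omega_t\le C(1+e^{-C_1\phi_1-\delta\phi_2})$. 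The main obstacle, compared with Theorem~\ref{thm: C2 est}, is that one cannot introduce the factor $(t-\vep)$ in front of $\log\tr_\omega\omega_t$ to make the boundary term vanish: one must instead use the hypothesis $\Delta_\omega\f_0\le e^{-C_1\phi_1}$ to handle $t=0$ directly, and carefully balance the $\tr_{\omega_t}\omega$ coefficients coming from Chern--Lu/Siu, from the $\phi_i$ and from $\Delta_t\f_t$, which is precisely where the assumption $\delta<1/2$ becomes crucial.
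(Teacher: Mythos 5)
Your auxiliary function and maximum-principle computation are essentially the paper's own: the paper works with $H=\log\tr_{\omega}(\omega_t)+C_1\phi_1-A(\f_t-\delta\phi_2)$ (your extra $+\delta\phi_2$ is harmless), handles $t_0=0$ exactly as you do via $\tr_\omega\omega_0=n+\Delta_\omega\f_0\le (n+1)e^{-C_1\phi_1}$, uses $\delta<1/2$ together with $\theta_t\ge \tfrac23\omega$ and $-\Delta_t\phi_i\le\tr_{\omega_t}(\omega)$ to produce the dominant negative term, and uses Lemma \ref{lem: AM-GM} at the interior maximum in the same way. So the route is the same; the issue is in your bookkeeping of what the constant may depend on.

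Twice you appeal to the boundedness of $\phi_1,\phi_2$ (and once to the smoothness of $\f_t$): to bound the remaining terms of $H(t_0,x_0)$, and to absorb the ``redundant'' $-A\delta\phi_2$ at the very end. A constant depending on $\inf_X\phi_2$ or on $\inf_{[0,T]\times X}\f_t$ is not \emph{uniform} in the sense this section requires: in the applications (Theorems \ref{thm: cvg zero exp} and \ref{thm: convergence at zero smooth case}) the theorem is applied to the approximating flows $\f_{t,j}$ with $\phi_1,\phi_2$ equal to (multiples of) the approximants $\p^{\pm}_j$, whose infima tend to $-\infty$, and the whole point is that $C$ is independent of $j$. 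The repair is short and is the one implicitly used in the paper: at the maximum point use $C_1\phi_1+\delta\phi_2\le 0$ (after your normalization) together with $\f_t\ge\f_0-c(t)\ge\delta\phi_2-c(T)$ from \cite[Lemma 2.9]{GZ13} (the same bound invoked in Proposition \ref{prop: est dot more}), which gives $-A(\f_{t_0}-\delta\phi_2)\le A\,c(T)$ uniformly; and in the final rearrangement, after using Lemma \ref{lem: upper bound varphi_t} to discard $A\f_t$, simply keep the bound $\log\tr_\omega(\omega_t)\le C-C_1\phi_1-(1+A)\delta\phi_2$. Since $A$ is itself a uniform constant, this is exactly the kind of estimate the theorem is used for (and is what the paper's own computation yields); forcing the exponent down to $\delta$ by invoking $\Vert\phi_2\Vert_{L^\infty}$ destroys the uniformity and is unnecessary.
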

\begin{proof}
Consider the function 
$$
H(t,x)= \log \tr_{\omega} (\omega_t) + C_1\phi_1 - A(\f_t-\delta\phi_2),
$$
where $A>0$ is a big constant (under control) to be specified later. This is a smooth function on $[0,T]\times X$ and hence it attains its maximum at some point $(t_0,x_0)$. If $t_0=0$ we are done since by our construction $H(0,.)\leq 0$. Assume now that $t_0>0$. From now on our computations take place at this maximum point $(t_0,x_0)$.  By the maximum principle and by the same arguments as in Theorem \ref{thm: C2 est} we eventually get 
\begin{equation}
\label{eq: C2 est more 1}
0\leq \left(\frac{\partial }{\partial t}-\Delta_t \right)H \leq A_1-A\dot{\f_t} - \left(\frac{A}{6}-B-C_1-A_2\right)\tr_{\omega_{t}}(\omega),
\end{equation}
where $-B<0$ is a lower bound for the holomorphic bisectional curvature of $\omega$, and $A_2$ is a constant under control.  
Here we use the fact  that $\delta <1/2$ and $\theta_t \geq \frac{2}{3} \omega$ for all $t\in [0,T]$ and 
$$
-\tr_{\omega_t}(dd^c \phi_1) \leq \tr_{\omega_t}(\omega), \ \ \tr_{\omega_t}(\chi) \geq -A_2\tr_{\omega_t}(\omega). 
$$
It follows from Lemma \ref{lem: AM-GM} that 
$$
\tr_{\omega_t} \omega \geq n \exp\left(\frac{-\dot{\f_t}}{n}\right).
$$
Plugging this into (\ref{eq: C2 est more 1}), choosing $A$ big enough and using the inequality $-\varepsilon x +\log x \leq C_\varepsilon$ with $0<\varepsilon<(An)^{-1}$, we obtain an upper bound for $\tr_{\omega_t}(\omega)$ at $(t_0,x_0)$. We infer that there exists a uniform constant $C>0$ such that $H(t_0,x_0)\leq C$.  Thus $H(t,x)\leq C$ and so
$$\log \tr_{\omega}(\omega_t) \leq C-C_1\phi_1 +A\delta \phi_2$$ or equivalently
$$\tr_{\omega}(\omega_t) \leq C' e^{-C_1\phi_1 +A\delta \phi_2}.$$
\end{proof}

\section{Regularity of weak solutions}
\label{sec: Regularity}
In this section we study short time regularity of the K\"ahler-Ricci flow (\ref{eq: KRflow}) starting from a current having positive Lelong number. In particular, we give a proof of Theorem A and show that the result is optimal. 

In the whole section we assume that $\f_0$ is a $\omega$-psh function on $X$ such that
$$
\frac{1}{2c(\f_0)}<T_{\max}. 
$$
This condition guarantees that the approximating smooth solutions will not converge uniformly to $-\infty$ (see \cite[Lemma 2.9]{GZ13}).

\subsection{Construction of the maximal weak solution}
\label{construction}
For convenience, let us recall the construction of the maximal weak solution due to Guedj and Zeriahi \cite{GZ13}. Let $(\f_{0,j})$ be a decreasing sequence of smooth strictly $\omega$-plurisubharmonic functions which converges to $\f_0$. Let $(\f_{t,j})$ be the sequence of smooth solutions of the equation (\ref{eq: parabolic}) starting from $\f_{0,j}$. When $j$ goes to $+\infty$, for each $t>0$ fixed, the sequence $(\f_{t,j})$ decreases to a well-defined $\theta_t$-psh function $\f_t$ (not identically $-\infty$ thanks to \cite[Lemma 2.9]{GZ13}).  

\begin{ex}\label{CondTmax}
 Let us stress that the condition $1/2c(\f_0)<T_{\max}$ is necessary to insure that the maximal flow is well-defined as the following example shows. \\
\indent Consider $X=\mathbb{P}^1$ and $\omega=4\omega_{FS}$ whose potential is given in homogeneous coordinate by
$$
\rho = 2\log (|z|^2+|w|^2).
$$ 
It is well-known that $\Ric(\omega)= \omega$. Consider the K\"ahler-Ricci flow 
$$
\dot{\omega}_t = - \Ric \ \omega
$$
and observe that in this case we have $T_{\max}=1$. Consider the following sequence of functions
$$
\f_{0,j}:= 2\log (|z|^2+ |w|^2/j) -  \rho. 
$$
The sequence $(\f_{0,j})_j$ decreases to a $\omega$-psh function $\f_0$ with $c(\f_0)=1/4$. Thus $1/2c(\f_0)=2>T_{\max}$. A straightforward computation shows that 
$$
\omega +dd^c \f_{0,j} = e^{-\f_{0,j} - \log j} \omega. 
$$
Let us define
$$
\f_{t,j}:= (1-t)\f_{0,j} - t\log j - t + (t-1)\log (1-t),\ \ t\in (0,1) 
$$
and note that $\f_{t,j}$ solves the following parabolic Monge-Ampere equation 
$$
\omega - t\Ric(\omega) + dd^c u_t = e^{\dot{u}_t} \omega,
$$
with initial data $\f_{0,j}$. We can also check that $\f_{t,j}$ decreases to $-\infty$ if $t>0$. Hence the maximal solution constructed in \cite{GZ13} is not well-defined. Note, however, that the flow $\omega_{t,j}$ converges to $\omega_t=(1-t)\omega_0$.
\end{ex}

\begin{lem}
\label{lem: independent}
The limit $\f_t$ obtained as above does not depend on the choice of the approximating sequence $(\f_{0,j})$. 
\end{lem}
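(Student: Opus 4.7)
The plan is to show that if $(\f_{0,j})$ and $(\psi_{0,k})$ are two sequences of smooth strictly $\omega$-psh functions decreasing to $\f_0$, then the corresponding monotone limits $\f_t := \lim_{j\to\infty} \f_{t,j}$ and $\tilde{\f}_t := \lim_{k\to\infty} \psi_{t,k}$ of the smooth solutions of (\ref{eq: parabolic}) coincide for every $t \in [0,T_{\max})$. By symmetry it suffices to prove $\f_t \leq \tilde{\f}_t$, which itself follows from $\f_t \leq \psi_{t,k}$ for each fixed $k$ by letting $k\to \infty$. The strategy is to combine Proposition \ref{prop: clas_comp_prin} with a compactness argument that controls $\f_{0,j}$ by $\psi_{0,k}+\vep$ uniformly on $X$ for $j$ large enough.

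First I would fix $k\geq 1$ and $\vep>0$ and consider the closed sets
$$
C_j := \{x\in X \setdef \f_{0,j}(x) \geq \psi_{0,k}(x)+\vep\}.
$$
Each $C_j$ is closed by continuity of $\f_{0,j}$ and $\psi_{0,k}$, and the family $(C_j)$ is nested since $(\f_{0,j})$ is decreasing in $j$ while $\psi_{0,k}$ is fixed. At every point $x\in X$ one has $\f_{0,j}(x)\downarrow \f_0(x)\leq \psi_{0,k}(x)$, hence $\f_{0,j}(x)<\psi_{0,k}(x)+\vep$ for $j$ large enough, so $\bigcap_j C_j = \emptyset$. By compactness of $X$ and the nested closed set property, there must exist $j_0 = j_0(k,\vep)$ with $C_{j_0}=\emptyset$, i.e.\ $\f_{0,j} \leq \f_{0,j_0} \leq \psi_{0,k} + \vep$ on $X$ for every $j \geq j_0$.

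The second step is the application of the classical comparison principle. Since the parabolic equation (\ref{eq: parabolic}) is invariant under the addition of a (time-independent) constant to the unknown, the function $\psi_{t,k}+\vep$ is itself a smooth solution of (\ref{eq: parabolic}) on $[0,T_{\max})\times X$ with initial value $\psi_{0,k}+\vep$. Applying Proposition \ref{prop: clas_comp_prin} to the pair $(\psi_{t,k}+\vep,\, \f_{t,j})$, we get
$$
\f_{t,j} \leq \psi_{t,k}+\vep, \qquad \forall t\in[0,T_{\max}),\ j\geq j_0.
$$
Letting successively $j\to\infty$, $\vep\to 0$ and $k\to\infty$ yields $\f_t\leq \tilde{\f}_t$; exchanging the roles of the two sequences gives the reverse inequality, and hence equality.

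The argument is essentially routine once this setup is in place, so I do not anticipate a serious obstacle. The only delicate point is the uniform upper bound $\f_{0,j}\leq \psi_{0,k}+\vep$, which would fail without the monotonicity of the approximating sequence: in a more general setting one would need to invoke a Hartogs-type lemma for quasi-psh functions, whereas here the monotone convergence together with the upper semicontinuity of both sides makes the finite-intersection argument immediate.
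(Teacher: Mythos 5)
Your argument is correct and follows essentially the same route as the paper: compare the two initial sequences up to $\vep$, transport the inequality forward in time via Proposition \ref{prop: clas_comp_prin} (using that adding a constant preserves solutions of (\ref{eq: parabolic})), and pass to the limits. The only cosmetic difference is that the paper invokes Hartogs' lemma for the uniform bound $\f_{0,j}\leq \psi_{0,k}+\vep$, whereas you derive it by the elementary Dini-type finite-intersection argument, which is legitimate here since the approximants are continuous and decrease monotonically.
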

\begin{proof}
Let $\p_{0,j}$ be another decreasing sequence of smooth strictly $\omega$-psh functions which converges to $\f_0$. Let $(\p_{t,j})$ be the corresponding smooth solutions of (\ref{eq: parabolic}) starting from $\p_{0,j}$. Fix $k\in \N$ and $\vep>0$. By Hartogs' lemma there exists $j_k\in \N$ such that 
$$
\p_{0,j} \leq \f_{0,k} + \vep, \ \ \forall j\geq j_k. 
$$
It follows from the comparison principle (Proposition 
\ref{prop: clas_comp_prin}) that 
$$
\p_{t,j} \leq \f_{t,k} + \vep, \ \ \forall j\geq j_k.
$$
Now, letting $k\to +\infty$ and then $\vep \to 0$ we get one inequality. The result follows since we can inverse the role of $\f_t$ and $\p_t$ in the above arguments. 
\end{proof}
\begin{rem}
\label{rem: comparison maximal}
The proof given above also shows that if $\f_t$ and $\p_t$ are two maximal solutions of (\ref{eq: parabolic}) with $\f_0\leq \p_0$ then $\f_t\leq \p_t$.  
\end{rem}

\subsection{Short time singularity} It was shown in \cite[Theorem 6.1]{GZ13} that when $t>1/2c(\f_0)$, $\omega_t:=\theta_t+dd^c\f_t$ is a K\"ahler form satisfying the twisted K\"ahler-Ricci flow equation (\ref{eq: KRflow}). In particular they proved that for $t>1/2c(\f_0)$ $\f_t$ is a smooth and verifies equation (\ref{eq: parabolic}). \\

In what follows we investigate the behavior of the maximal solution in short time. We show that the maximal solution of the equation (\ref{eq: parabolic}) starting from a current with positive Lelong numbers is unbounded for short time. This relies on a quite simple observation that we will explain in the next lines before entering into the details of the proof of Lemma \ref{lem: singularity}. 

Assume that $\f_0\in \psh(X,\omega)$ has positive Lelong number at some $x_0\in X$, then   
$$
\f_0 \leq \gamma \log |z| + C
$$
for some positive constants $\gamma, C$. Without loss of generality we can assume that $\f_0= \phi(z)\log |z|^2$, where $z$ is a local coordinate in a neighborhood $V$ around $z=0$ and $\phi$ is a cut-off function which vanishes outside $V$ and $\phi\equiv 1$ near $z=0$. We can also assume that $\f_0\in \psh(X,\omega/2)$ and $T_{\max}>1$. Let $\f_t$ be the maximal solution of the equation (\ref{eq: parabolic}) with initial data $\f_0$. Fix $\vep>0$ small enough. Consider the following family 
$$
u_{t,\vep}:= \left[1-(n+1)t\right] \phi(z)\log (|z|^2+\vep^2) + Ct.
$$
We can choose $C>0$ large enough (which does not depend on $\vep$) such that $u_{t,\vep}$ is a supersolution of the equation (\ref{eq: parabolic}). Since $u_{0,\vep}\geq \f_0$ it follows from the comparison principle that $u_{t,\vep} \geq \f_t, \ \forall t\in (0,1/(n+1))$. By letting $\vep\searrow 0$ we can conclude that $\f_t$ is unbounded and has positive Lelong number for very short time. 

\medskip

In the Lemma below we prove that the solution is singular for all $0<t<1/2nc(\f_0)$. When $n=1$ this coupled with \cite[Theorem 6.1]{GZ13} gives a complete picture of the regularity properties of the K\"ahler-Ricci flow. 


\begin{lem}
\label{lem: singularity}
Let $\phi\in \psh(X,\omega)$ be such that $e^{\gamma\phi}\in \Cc^{\infty}(X)$ for some positive constant $\gamma$. Assume that $\f_0\leq \phi\leq 0$. Then there exists a positive constant $C$ depending on an upper bound for  $dd^c e^{\gamma \phi}$ such that 
 $$
 \f_t \leq (1-n\gamma t) \phi + Ct , \ \forall t\in [0,1/n\gamma]. 
 $$
\end{lem}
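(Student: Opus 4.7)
The strategy, following the blueprint sketched just before the lemma, is to build an explicit smooth supersolution of the parabolic Monge--Amp\`ere equation~(\ref{eq: parabolic}) of the shape $(1-n\gamma t)\widetilde\phi+Ct$, invoke the classical comparison principle (Proposition~\ref{prop: clas_comp_prin}), and pass to the limit in the regularization. The natural smooth approximation of $\phi$ that exploits the hypothesis $e^{\gamma \phi}\in \Cc^{\infty}(X)$ is
$$\phi_\vep:=\gamma^{-1}\log\bigl(e^{\gamma\phi}+\vep\bigr),\qquad \vep\in(0,1),$$
which is smooth on $X$ and decreases pointwise to $\phi$ as $\vep\searrow 0$. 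Writing $v:=e^{\gamma\phi}\in[0,1]$ (since $\phi\leq 0$), the smoothness of $v$ yields the identity
$$\omega+dd^c\phi_\vep=\frac{\vep}{v+\vep}\omega+\frac{v}{v+\vep}(\omega+dd^c\phi)+\frac{\gamma v\vep}{(v+\vep)^2}\,d\phi\wedge d^c\phi,$$
which is a sum of positive currents; in particular $\phi_\vep\in \psh(X,\omega)$.

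Using the equivalent expression $dd^c\phi_\vep=\frac{dd^c v}{\gamma(v+\vep)}-\frac{dv\wedge d^c v}{\gamma(v+\vep)^2}\leq \frac{M\omega}{\gamma(v+\vep)}$ (with $M$ an upper bound for $dd^c e^{\gamma\phi}/\omega$), together with $\theta_t\leq 2\omega$ and $v+\vep\leq 2$, we obtain the pointwise sandwich
$$0\leq \theta_t+(1-n\gamma t)dd^c\phi_\vep\leq C_1\,e^{-\gamma\phi_\vep}\omega,\qquad t\in[0,1/(n\gamma)],$$
for a constant $C_1$ depending only on $M,\gamma$ and the fixed background data. The lower bound comes from rewriting the left-hand side as $(1-n\gamma t)(\omega+dd^c\phi_\vep)+t(n\gamma\omega+\chi)$ and absorbing any negative part of $\chi$ by a small perturbation $\delta\rho$, eventually removed. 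Taking $n$-th powers and logarithms,
$$\log\frac{\bigl(\theta_t+(1-n\gamma t)dd^c\phi_\vep\bigr)^n}{\omega^n}\leq -n\gamma\phi_\vep+n\log C_1,$$
so that the smooth function $u_t^\vep:=(1-n\gamma t)\phi_\vep+Ct$ with $C:=n\log C_1$ is a supersolution of~(\ref{eq: parabolic}) on $[0,1/(n\gamma)]$, since $\dot u_t^\vep=-n\gamma\phi_\vep+C$.

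The endgame is standard. Let $\psi_t^\vep$ denote the smooth solution of~(\ref{eq: parabolic}) with initial datum $\phi_\vep$, which exists on $[0,T_{\max})$ by~\cite{Cao85,Tsu,TZha06}. Proposition~\ref{prop: clas_comp_prin} yields $\psi_t^\vep\leq u_t^\vep$. For the approximating smooth flow $\f_{t,j}$ with $\f_{0,j}\searrow \f_0$ from Subsection~\ref{construction}, the chain $\f_0\leq \phi\leq \phi_\vep$ combined with Hartogs' lemma gives $\f_{0,j}\leq \phi_\vep+\eta$ eventually, for every $\eta>0$; comparison then yields $\f_{t,j}\leq \psi_t^\vep+\eta$. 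Passing to the limits $j\to\infty$, $\eta\to 0$, and finally $\vep\to 0$ produces $\f_t\leq (1-n\gamma t)\phi+Ct$. The main technical hurdle is the pointwise sandwich of the second paragraph: the upper bound is a direct consequence of the exponential regularization, while the pointwise nonnegativity of $\theta_t+(1-n\gamma t)dd^c\phi_\vep$ hinges on the positive-current identity for $\omega+dd^c\phi_\vep$ from the first paragraph, together with the perturbation device used to handle any negative part of $\chi$.
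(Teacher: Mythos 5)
Your proof is correct and follows essentially the same route as the paper: regularize via $\phi_\vep=\gamma^{-1}\log(e^{\gamma\phi}+\vep)$, use $dd^c\phi_\vep\leq C e^{-\gamma\phi_\vep}\omega$ to make $(1-n\gamma t)\phi_\vep+Ct$ an explicit supersolution, and conclude by the comparison principle after approximating $\f_0$ from above by smooth flows. The only real difference is that the paper compares against the equation with reference form $\omega$ under the normalization $\theta_t\leq\omega$ (so positivity of the supersolution's form follows automatically from $\omega$-plurisubharmonicity of $\phi_\vep$), whereas you keep $\theta_t$ and therefore need $n\gamma\omega+\chi\geq 0$; your ``perturbation $\delta\rho$'' remark for this point is left vague, but it is the same kind of harmless simplification the paper itself makes (in the applications $\gamma$ is large).
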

\begin{proof}
Notice that in the statement of the lemma we impose implicitly that $1/n\gamma <T_{\max}$. For simplicity we also assume that 
$$
\theta_t \leq \omega , \  \forall t\in [0, 1/n\gamma]. 
$$
In practice $\gamma$ will be very large and these conditions are irrelevant.
By replacing $\phi$ by the function $\gamma^{-1}\log ( e^{\gamma \phi}+\vep)$ and letting $\vep \to 0$ we can assume that $\phi$ is actually smooth on $X$. Exploiting the same arguments in the proof of Lemma \ref{lem: independent} we can also assume that $\f_0$ is smooth. 

Observe that there is a uniform constant $C>0$ depending only on an upper bound for  $dd^c e^{\gamma \phi}$ such that
$$
 dd^c \phi  \leq C e^{-\gamma \phi}\omega. 
$$
Set 
$$
\phi_t := (1-n\gamma t)\phi + t\log (C^n2^n). 
$$
Since $\phi\leq 0$ and $dd^c \phi \leq Ce^{-\gamma \phi} \omega$ we see that
$$
0\leq (\omega + dd^c \phi_t)  \leq (1+ Ce^{-\gamma \phi})\omega \leq 2Ce^{-\gamma \phi} \omega. 
$$ 
It then follows that 
$$
(\omega +dd^c \phi_t)^n \leq 2^n C^n e^{-n \gamma \phi} \omega^n=e^{\dot{\phi}_t}\omega^n. 
$$
Then $\phi_t$ is a supersolution of the following parabolic equation 
$$
(\omega +dd^c u_t)^n = e^{\dot{u}_t} \omega^n,
$$
while $\f_t$ is a subsolution since we have assumed that $\theta_t \leq \omega$.  The result follows by the classical maximum principle.
\end{proof}

Note that the above result shows that for short time we have propagation of sigularities (as it is highlighted in Theorem \ref{thm: singularity} below) and that our $C^0$ estimate in Theorem \ref{thm: C0-estimate} is optimal.
\begin{thm}
\label{thm: singularity}
Assume that $\f_0$ has positive Lelong numbers. Then we have the following relation between the Lelong number superlevel sets of $\f_t$ and $\f_0$:
$$
D_c(\f_0) \subset D_{c(t)}(\f_t), \ \ c(t)=c - 2nt. 
$$
In particular,  the maximal solution $\f_t$ has positive Lelong numbers for any $t<1/2nc(\f_0)$. 
\end{thm}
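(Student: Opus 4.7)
The plan is to apply Lemma~\ref{lem: singularity} at a fixed point $x_0 \in D_c(\f_0)$ with a carefully chosen $\omega$-psh majorant $\phi$ of $\f_0$ whose Lelong number at $x_0$ approaches $c$. The key observation is that if $\nu(\phi, x_0) = \nu_0$ then the smallest admissible $\gamma$ for which $e^{\gamma\phi}$ is smooth near $x_0$ is $\gamma = 2/\nu_0$ (since $e^{\gamma\phi} \sim |z|^{\gamma\nu_0}$ there, smooth precisely when $\gamma\nu_0 \ge 2$); with this choice the lemma gives
$$\f_t \le \left(1 - \frac{2nt}{\nu_0}\right)\phi + Ct,$$
whence $\nu(\f_t, x_0) \ge \nu_0 - 2nt$. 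Letting $\nu_0 \nearrow c$ produces the inclusion. We first normalize $\sup_X \f_0 \le 0$, which costs nothing since translating $\f_0$ by a constant only translates $\f_t$ by the same constant and leaves Lelong numbers unchanged.

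\textbf{The test function.} For each $\delta \in (0, c)$ and each $\vep > 0$ I would construct $\phi_{\delta,\vep} \in \pshXo$ satisfying
\begin{itemize}
\item[(a)] $\f_0 \le \phi_{\delta,\vep} \le 0$ on $X$;
\item[(b)] $\phi_{\delta,\vep}(z) = \tfrac{c-\delta}{2}\log(|z|^2 + \vep^2) + O(1)$ near $x_0$;
\item[(c)] with $\gamma_\delta := 2/(c-\delta)$, one has $dd^c e^{\gamma_\delta \phi_{\delta,\vep}} \le M_\delta\,\omega$, with $M_\delta$ independent of $\vep$.
\end{itemize}
Condition (c) comes essentially from the local model alone: with $\phi_{\delta,\vep}(z) = \tfrac{c-\delta}{2}\log(|z|^2+\vep^2)$ near $x_0$, one has $e^{\gamma_\delta \phi_{\delta,\vep}} = |z|^2 + \vep^2$ whose $dd^c$ equals the smooth positive form $dd^c|z|^2$, independent of $\vep$. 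Globally, $\phi_{\delta,\vep}$ is obtained by a regularized-max gluing of this local model with the smooth decreasing approximation $\f_{0,m} \searrow \f_0$ from \S\ref{construction}, the index $m$ taken large enough to guarantee (a); the local model dominates $\f_0$ in a small neighborhood of $x_0$ because $\f_0 \le c\log|z| + O(1)$ while the model is $(c-\delta)\log|z| + O(1)$, and $-\delta\log|z| \to +\infty$ as $z \to x_0$, so the gluing leaves the singular structure at $x_0$ intact.

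\textbf{Applying the lemma.} Applying Lemma~\ref{lem: singularity} to $\phi_{\delta,\vep}$ with $\gamma = \gamma_\delta$---and observing from its proof that the resulting constant depends on $\phi_{\delta,\vep}$ only through the upper bound in (c)---we obtain
$$\f_t(z) \le \left(1 - \frac{2nt}{c-\delta}\right)\phi_{\delta,\vep}(z) + C_\delta\, t, \qquad t \in [0,\, (c-\delta)/(2n)],$$
with $C_\delta$ uniform in $\vep$. Sending $\vep \searrow 0$ and reading off the asymptotics near $x_0$ gives $\f_t(z) \le (c - \delta - 2nt)\log|z| + O(1)$; dividing by $\log|z| < 0$ and taking $\liminf$ produces $\nu(\f_t, x_0) \ge c - \delta - 2nt$, and then $\delta \searrow 0$ closes the argument. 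For the ``in particular'' clause: positive Lelong numbers of $\f_0$ force $c(\f_0) < +\infty$, and by Skoda's integrability theorem there exists $x_0$ with $\nu(\f_0, x_0) \ge 1/c(\f_0)$; the inclusion then gives $\nu(\f_t, x_0) \ge 1/c(\f_0) - 2nt > 0$ whenever $t < 1/(2n c(\f_0))$.

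\textbf{Main obstacle.} The principal technical hurdle is the construction of $\phi_{\delta,\vep}$: the four conditions (a)--(c) are in tension, because sharp Lelong information at $x_0$ pushes $\phi_{\delta,\vep}$ to be as singular as possible at $x_0$ while (c) caps $\gamma_\delta \cdot \nu(\phi_{\delta,\vep}, x_0) \le 2$. Balancing these forces $\gamma_\delta = 2/(c-\delta)$, which is precisely what produces the sharp coefficient $2n$ in $c(t) = c - 2nt$. The $\vep$-uniformity of $M_\delta$ in (c) is crucial for the limit $\vep \searrow 0$ to preserve the estimate at the level of Lelong numbers, and the extra smoothing index $m$ used in the gluing must be controlled in tandem with $\vep$.
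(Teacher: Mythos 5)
Your overall strategy is exactly the paper's: apply Lemma \ref{lem: singularity} at a point $x_0\in D_c(\f_0)$ to a quasi-psh majorant of $\f_0$ with a logarithmic pole of coefficient (roughly) $c$ at $x_0$, with $\gamma$ chosen so that $\gamma$ times the pole coefficient equals $2$, and conclude the ``in particular'' clause via Skoda. The gap is in the step you yourself single out as the main hurdle: the construction of the test function. A regularized-max gluing of the local model with the smooth approximant $\f_{0,m}\geq \f_0$ destroys the pole, because you are taking a maximum with a function that is bounded near $x_0$: for fixed $m$ and small $\vep$, the model $\tfrac{c-\delta}{2}\log(|z|^2+\vep^2)\approx (c-\delta)\log\vep$ lies far below $\f_{0,m}$ on a whole neighborhood of $x_0$, so the glued function equals $\f_{0,m}$ there. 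Consequently the $O(1)$ in your condition (b) cannot be uniform in $\vep$, and after letting $\vep\searrow 0$ your estimate only yields $\f_t\leq (1-\tfrac{2nt}{c-\delta})\f_{0,m}+C_\delta t$ near $x_0$, which gives no lower bound on $\nu(\f_t,x_0)$. Gluing only on an annulus while keeping the model inside does not repair this: pshness of such a patching would force the model to dominate $\f_{0,m}$ on the inner circle and be dominated by it on the outer one, whereas the model increases with $|z|$.

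The paper avoids max-gluing altogether. Since $\nu(\f_0,x_0)\geq c$ gives $\f_0\leq c\log|z|+O(1)$ near $x_0$, it takes $\phi:=c\,\chi(z)\log|z|$ with $\chi$ a cutoff equal to $1$ near $x_0$ and extended by zero; this is $A\omega$-psh for some $A$, $e^{2\phi/c}=|z|^{2\chi}$ is smooth on all of $X$, and $\f_0-C\leq \phi\leq 0$ after subtracting a constant from $\f_0$, which is harmless since translating the initial datum translates the flow and leaves Lelong numbers unchanged. With this choice neither your $\delta$-loss nor your $\vep$-smoothing is needed: the regularization $\gamma^{-1}\log(e^{\gamma\phi}+\vep)$ is already performed inside the proof of Lemma \ref{lem: singularity}, whose constant depends only on an upper bound for $dd^ce^{\gamma\phi}$ — precisely the uniformity you were trying to arrange by hand. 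Your ``in particular'' step is sound in substance, but note that Skoda directly produces, for each $c<1/c(\f_0)$, a point with $\nu(\f_0,\cdot)\geq c$ (this is what the paper uses); asserting a point with $\nu\geq 1/c(\f_0)$ exactly requires in addition the upper semicontinuity of Lelong numbers (or closedness of the sets $D_c$) together with compactness.
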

\begin{proof} Let $c>0$ and   $x_0\in D_c(\f_0)$. It follows from the very definition of the Lelong number that we can find a function $\phi\in \psh(X,A\omega)$ for some positive constant $A$ such that  
$$
e^{2\phi/c} \in \Cc^{\infty}(X)\ \ {\rm and}\ \  \phi \geq \f_0. 
$$
In fact, the function $\phi$ can be written locally as  
$$
\phi(z)=c\log |z|,
$$
where $z$ is a local coordinate centered at $x_0$. Applying Lemma \ref{lem: singularity} we get 
$$
\f_t \leq (1-2nt/c)\phi + Ct.
$$
It follows that $\nu(\f_t,x_0)\geq c - 2nt$, hence $x_0\in D_{c(t)}(\f_t)$. 

In particular, if $t<1/2nc(\f_0)$ then for a constant $c$ such that $2nt< c<1/c(\f_0)$ the set 
$D_{c}(\f_0)$ is non empty as follows from Skoda's integrability theorem since $e^{-\frac{2}{c} \f_0}\notin L^1$. It follows from the first part of the theorem that $D_{c(t)}(\f_t)$ is nonempty for $c(t)=c-2nt>0$. 
\end{proof}

Our previous analysis describes various singular situations. It also sheds some light on the behavior of the maximal solution in short time. In fact, it strongly suggests that the maximal solution is smooth outside analytic subsets of $X$. We next prove Theorem A  confirming this suggestion by employing our previous a priori estimates.

\subsection{Proof of Theorem A}
Let $(\f_{0,j})$ be a decreasing sequence of smooth strictly $\omega$-psh functions converging to $\f_0$. This is always possible thanks to Demailly's regularization (see \cite{Dem94}, or \cite{BK07}). Let $(\f_{t,j})$ be smooth solutions of the parabolic equation (\ref{eq: parabolic}) starting from $\f_{0,j}$. It was proven by Guedj-Zeriahi  in \cite{GZ13} that, as $j\to+\infty$, the family $\f_{t,j}$ decreases to $\f_t$, which are $\theta_t$-psh functions on $X$. They also proved that for each $t>(2c(\f_0))^{-1}$, the function $\f_t$ is smooth on $X$ and satisfies (\ref{eq: parabolic}) pointwise on $X$. Moreover, 
$\f_t \to \f_0$
in capacity as $t\to 0$. 

We investigate the regularity of $\f_t$ for small $t$. Let us fix $\vep_0>0$. 
Fix a constant $\vep>\vep_0$.  Thanks to Lemma \ref{lem: Dem} we  fix $\p\in \psh(X,\omega)\cap \Cc^{\infty}_{\rm loc}(X\setminus D_{\kappa})$ (the constant $\kappa=\kappa(\vep_0)$ is defined  by Lemma \ref{lem: Dem})  such that 
\begin{equation}\label{eq: proof A 1}
\int_X \exp{\left(\frac{2\p-2\f_0}{\vep_0}\right)} dV <+\infty.
\end{equation}
We can always assume that $\p\leq 0$.  Recall that by the choice of $T$ and since $\p$ is less singular that $\f_0$, we always have
\begin{equation}
\label{eq: proof A 2}
\int_X \exp{\left(\frac{-\p}{T}\right)}\omega^n <+\infty. 
\end{equation}
We will prove that $\f_t$ is smooth on $X\setminus D_{\kappa}$ for each $t>\vep$. 
Fix a compact subset 
$K\Subset X\setminus D_{\kappa}$. We apply Lemma \ref{lem: upper bound varphi_t} and Theorem \ref{thm: C0-estimate} for the flow $\f_{t,j}$ (and use the function $\p$ above) and note that, thanks to (\ref{eq: proof A 1}) and (\ref{eq: proof A 2}) the constants in these theorems do not depend on $j$. Thus
$$
\sup_{t\in [\vep,T]}\sup_K |\f_{t,j}| \leq C(\vep,K).
$$
It follows from Theorem \ref{thm: C2 est} that we also have a uniform bound on  $\Delta \f_{t,j}$:
$$
\sup_{t\in [\vep,T]}\sup_K |\Delta_{\omega} ( \f_{t,j})| \leq C(\vep,K).
$$
Now, from Proposition \ref{prop: bound below dot} and from the fact that the functions $\f_{t,j}$ satisfy the equation (\ref{eq: parabolic}) we obtain a locally uniform bound on $\dot{\f}_{t,j}$:
$$
\sup_{t\in [\vep,T]}\sup_K  \left|\frac{d \f_{t,j}}{dt} \right| \leq C(\vep,K).
$$
Then using the complex parabolic Evans-Krylov theory together with 
Schauder's estimate (see \cite[Theorem 4.1.4]{BEG13}), we obtain higher order estimates on $K\times[\vep,T]$ just as in \cite{GZ13}. This justifies the smoothness of $\f_t$ on $X\setminus D_{\kappa}$.

\section{Uniqueness and stability}
\label{sec: Stability}
\subsection{Currents with zero Lelong numbers}
\label{subsect: zero LeLong}
In this section we study the uniqueness and stability of the K\"ahler-Ricci flow starting from currents having zero Lelong number.  The following estimate was proven in \cite{GZ13} for the maximal flow. The ideas in \cite{GZ13} are the prove such estimates (in a uniform way) for the smooth flows approximating the maximal flow. We prove in the lemma below that, in the case of zero Lelong numbers, the same estimate holds for any weak solution. The idea of our proof is quite simple: we follow the same strategy of Guedj-Zeriahi, but instead of dealing with the flow $\f_t$ itself we deal with the flow $\f_{t+\vep}$ (for $\vep>0$) and then we let $\vep\to 0$. The key point is that the new flow is smooth {\it up to zero} and the classical comparison principle can be applied. 

\begin{lem}
\label{lem: lower bound on varphi t}
Assume that $(\varphi_t)$ is a weak solution of the parabolic equation (\ref{eq: parabolic}) with initial data $\f_0$. Assume also that 
$c(\f_0)=+\infty$. Let $\beta>0$ be a constant such that $2\beta >1/T_{\max}$. Then 
$$
\varphi_t \geq (1- 2\beta t)\varphi_0 - C(t),\ \  0< t <(2\beta)^{-1}. 
$$
where $C(t)$ depends only on $t$ and $C(t)\searrow 0$ as $t\searrow 0$. 
\end{lem}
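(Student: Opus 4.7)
The plan is to reduce the problem to the smooth case via a time-shift so that the classical comparison principle applies. Since $c(\f_0) = +\infty$, condition (iii) in the definition of a weak solution forces $D_\vep = \emptyset$ for every $\vep > 0$, so any weak flow $(\f_t)$ is in fact smooth on $[\vep, T_{\max})\times X$ for each $\vep > 0$. For a fixed small $\vep > 0$ I set $\tilde\f_t := \f_{t+\vep}$. Then $\tilde\f_t$ is a classical smooth solution on $[0, T_{\max} - \vep)\times X$, with smooth strictly $\theta_\vep$-psh initial datum $\f_\vep$, of the parabolic equation obtained from (\ref{eq: parabolic}) by replacing $\theta_t$ with $\theta_{t+\vep}$. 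Because the shifted flow now has \emph{smooth} initial data, Proposition \ref{prop: clas_comp_prin} is directly available.

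Next I apply \cite[Lemma 2.9]{GZ13} to $\tilde\f_t$. The argument constructs a classical subsolution of the form
$$
B_t(x) := (1-2\beta t)\f_\vep(x) + h(t), \qquad h(0)=0,
$$
with $h(t)\searrow 0$ as $t\searrow 0$. The elementary identity
$$
\theta_{t+\vep} + dd^c B_t = (1-2\beta t)(\theta_\vep + dd^c \f_\vep) + t\bigl[2\beta\omega + (1+2\beta\vep)\chi\bigr]
$$
combined with the hypothesis $2\beta > 1/T_{\max}$ (which yields $2\beta\omega + \chi \geq c_0\omega$ for some $c_0>0$) shows that for $\vep$ small this form is strictly positive, bounded below by $(tc_0/2)\omega$. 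Concavity of $\det^{1/n}$ plus a Ko\l{}odziej-type $L^\infty$ estimate for the associated elliptic Monge-Amp\`ere equation let one choose $h(t)$ depending only on $t$ and on $\int_X e^{-A\f_\vep}\,\omega^n$ (for a controlled $A>0$) and \emph{not} on $\|\f_\vep\|_{L^\infty}$; this is the key refinement that is crucial for uniformity. Applying the classical comparison principle to $B_t$ and $\tilde\f_t$ gives
$$
\f_{t+\vep} \geq (1-2\beta t)\f_\vep + h(t), \qquad 0<t<(2\beta)^{-1}.
$$

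To pass to the limit $\vep\to 0^+$ I must bound $\int_X e^{-A\f_\vep}\,\omega^n$ uniformly in $\vep$. The assumption $c(\f_0) = +\infty$ together with Skoda's theorem give $e^{-A\f_0}\in L^1(X)$ for every $A>0$. Using the $L^1$-continuity of the weak flow, the upper semicontinuity in $(t,x)$ established at the end of Section \ref{sec: Preliminaries}, and strong openness of multiplier ideals in the form of uniform Skoda integrability for $L^1$-convergent sequences of quasi-psh functions, one deduces $\sup_{\vep\in(0,\vep_0]}\int_X e^{-A\f_\vep}\,\omega^n < +\infty$. Hence $h(t)$ can be chosen independently of $\vep$. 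Finally, $\f_{t+\vep}\to\f_t$ and $\f_\vep\to\f_0$ in $L^1(X)$ as $\vep\to 0^+$; extracting a subsequence converging almost everywhere, the pointwise inequality passes to the limit a.e., and upper semicontinuity in $x$ of both $\f_t$ and $\f_0$ upgrades it to the desired everywhere inequality.

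The main obstacle is the uniform exponential integrability of $\{\f_\vep\}_{\vep>0}$. For the maximal flow the monotone approximation $\f_{0,j}\searrow\f_0$ makes this immediate via monotone convergence, but for an \emph{arbitrary} weak solution no such monotonicity is available, so one must instead invoke strong openness to transport the $L^1$ control of $e^{-A\f_0}$ to the nearby functions $\f_\vep$. This is precisely what $c(\f_0) = +\infty$ guarantees and what fails in the positive-Lelong-number regime, explaining why only the weaker Theorem B' survives there.
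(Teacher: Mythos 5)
Your proposal is correct and follows essentially the same route as the paper: shift time by $\vep$ so that the flow is smooth up to $t=0$, run the barrier argument of \cite[Lemma 2.9]{GZ13} (whose auxiliary function $v_\vep$ solves the elliptic Monge--Amp\`ere equation $\alpha^n(\omega+dd^c v_\vep)^n=e^{\alpha v_\vep-2\beta\f_\vep}\omega^n$ and is bounded by Ko\l{}odziej's estimate), and then let $\vep\to 0$. The only differences are presentational: the paper gets the uniform bound on $\int_X e^{-A\f_\vep}\,\omega^n$ from Zeriahi's uniform Skoda integrability theorem \cite{Zer01} applied to the $L^1$-convergent family $\f_\vep\to\f_0$ with zero Lelong numbers (strong openness is not needed since $c(\f_0)=+\infty$), and the barrier genuinely contains the non-constant term $\alpha t v_\vep$, which your constant $h(t)$ can only absorb after that uniform bound on $v_\vep$ is in hand.
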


\begin{proof}
Fix a positive constant  $\alpha$  such that  $2\beta-\alpha>1/T_{\max}$. Take a small constant $\vep>0$ such that 
$$
(2\beta -\alpha)\omega + (1+2\beta\vep)\chi >0.
$$
Set
$$
u_t:=\f_{t+\varepsilon}, \ t\in [0,T_{\max}-\vep).
$$ 
Since $c(\f_0)=+\infty$, it follows from \cite[Theorem 1.1]{GZ13} that $u$ is smooth on $X\times [0,T]$.  Applying \cite[Lemma 2.9]{GZ13} we see that  for any $x\in X$ and any 
$t\in \left[0, (2\beta)^{-1}-\vep\right)$,
\begin{equation}
\label{eq: convergence in capacity}
(1-2\beta t)u_0(x)+ \alpha tv_{\vep}(x) +n (t\log t-t)\leq u_t(x),
\end{equation}
where $v_{\vep}$ is the unique continuous $\omega$-psh function satisfying 
$$
\alpha^n(\omega +dd^c v_{\vep})^n = e^{\alpha v_{\vep}-2\beta \f_{\vep}}\omega^n.
$$
Since $\f_0$ has zero Lelong number everywhere and since $\f_{\vep}$ converges to $\f_0$ in $L^1$ as $\vep \searrow 0$ it follows from  \cite[Theorem 3.1]{Zer01} that 
$$
\int_X e^{-4\beta\f_{\vep}} \omega^n \leq C,
$$
for a uniform constant $C>0$. Therefore, it follows from 
Ko{\l}odziej's $\Cc^0$ estimate (see \cite{Kol98}) that $v_{\vep}$ is uniformly bounded. Thus from (\ref{eq: convergence in capacity}) we get
$$
(1-2\beta t)\f_{\vep}(x)-C(t)\leq \f_{t+\vep}(x).
$$
Letting $\vep\rightarrow 0$ we obtain the result. 
\end{proof}
From Lemma \ref{lem: lower bound on varphi t} we immediately get the following convergence result.
\begin{cor}
Assume that $c(\f_0)=+\infty$ and $\f_t$ is a weak solution of the parabolic equation (\ref{eq: parabolic}). Then $\f_t$ converges to $\f_0$ in capacity as $t$ goes to zero. 
\end{cor}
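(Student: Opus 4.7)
The goal is to establish the two one-sided capacity estimates
$$\Capa(\{\f_t < \f_0 - \delta\}) \to 0 \quad \text{and} \quad \Capa(\{\f_t > \f_0 + \delta\}) \to 0$$
as $t \to 0$, for any fixed $\delta > 0$. Convergence in capacity follows by combining them.

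The first (lower) estimate is essentially free from Lemma \ref{lem: lower bound on varphi t}. Picking any $\beta > 0$ with $2\beta > 1/T_{\max}$, that lemma gives $\f_t - \f_0 \geq -2\beta t\,\f_0 - C(t)$, whence
$$\{\f_t < \f_0 - \delta\} \subset \left\{\f_0 < -\frac{\delta - C(t)}{2\beta t}\right\}.$$
Since $C(t)\to 0$, for $t$ small the right-hand threshold tends to $-\infty$, and sublevel sets of a fixed $\omega$-psh function have capacity tending to $0$ (a standard fact from pluripotential theory), concluding this half.

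For the upper estimate I plan to compare the weak solution with the smooth approximating flows. The crucial observation is that under the hypothesis $c(\f_0) = +\infty$, item (iii) of the definition of a weak solution forces $D_\vep = \emptyset$ for every $\vep > 0$, so $\f_t$ is actually smooth on $(0, T_{\max}) \times X$ and solves (\ref{eq: parabolic}) classically there. Pick a sequence $(\f_{0,j})$ of smooth strictly $\omega$-psh functions decreasing to $\f_0$, and let $\f_{t,j}$ denote the classical solution of (\ref{eq: parabolic}) starting from $\f_{0,j}$. Applying Proposition \ref{prop: gen comp prin} with $\f = \f_{t,j}$ (smooth solution with initial datum $\f_{0,j} \geq \f_0$) and $\p = \f_t$ (which qualifies as a smooth weak subsolution, since $c(\f_0) = +\infty$ and $\f_t \to \f_0$ in $L^1$) yields $\f_t \leq \f_{t,j}$ on $X$ for every $t$.

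Finally, $\f_{t,j} \to \f_{0,j}$ uniformly on $X$ as $t \to 0$ (smoothness up to $t=0$), while $\f_{0,j} \searrow \f_0$ converges in capacity by a classical Bedford--Taylor-type statement for decreasing sequences of $\omega$-psh functions. A two-step diagonal argument then closes the upper estimate: given $\eta > 0$, first choose $j$ large enough that $\Capa(\{\f_{0,j} > \f_0 + \delta/2\}) < \eta$, then choose $t_0 > 0$ so small that $|\f_{t,j} - \f_{0,j}| < \delta/2$ on $X$ for $0 < t \leq t_0$; combined with $\f_t \leq \f_{t,j}$ this yields
$$\{\f_t > \f_0 + \delta\} \subset \{\f_{t,j} > \f_0 + \delta\} \subset \{\f_{0,j} > \f_0 + \delta/2\}.$$
The only real subtlety is verifying the applicability of Proposition \ref{prop: gen comp prin}, which hinges precisely on $c(\f_0) = +\infty$ to promote the weak solution to a smooth subsolution on $(0,T_{\max})\times X$; the remaining pluripotential ingredients are standard.
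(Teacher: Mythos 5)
Your upper estimate --- the substantive half --- is correct. Since $c(\f_0)=+\infty$, item (iii) of the definition indeed forces $D_\vep=\emptyset$ for every $\vep>0$, so the weak solution is smooth on $(0,T_{\max})\times X$, solves the equation classically there, and (using item (ii) for the $L^1$-continuity at $t=0$) qualifies as a weak subsolution in the sense of Proposition \ref{prop: gen comp prin}; comparing with the smooth flows $\f_{t,j}$ started from a decreasing smooth approximation $\f_{0,j}\searrow\f_0$ then gives $\f_t\le\f_{t,j}$, and your two-step argument (uniform convergence of $\f_{t,j}$ to $\f_{0,j}$ as $t\to0$, plus convergence in capacity of the decreasing sequence $\f_{0,j}$) correctly yields $\Capa(\{\f_t>\f_0+\delta\})\to0$. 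This is a genuinely different route from the paper's, which treats the corollary as immediate from Lemma \ref{lem: lower bound on varphi t} together with the $L^1$-continuity at $t=0$ (the ``from above'' direction being the standard fact that $L^1$-convergence of quasi-psh functions gives one-sided convergence in capacity). Your version instead anticipates the comparison-with-approximants argument that the paper only develops later (Theorem \ref{thm: comparison zero}); it is more hands-on and self-contained, at the price of invoking Proposition \ref{prop: gen comp prin} and the classical theory of smooth flows up to $t=0$.

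There is, however, a sign slip in the lower estimate: your inclusion goes the wrong way. From $\f_t\ge(1-2\beta t)\f_0-C(t)$, a point with $\f_t<\f_0-\delta$ satisfies $(1-2\beta t)\f_0-C(t)<\f_0-\delta$, i.e. $2\beta t\,\f_0>\delta-C(t)$, hence $\f_0>(\delta-C(t))/(2\beta t)$. So $\{\f_t<\f_0-\delta\}$ is contained in a \emph{superlevel} set of $\f_0$, not in the sublevel set $\{\f_0<-(\delta-C(t))/(2\beta t)\}$ you wrote, and the fact about capacities of sublevel sets is not the relevant one. The correct containment actually makes this half easier than you claim: since $\f_0$ is upper semicontinuous on the compact $X$ it is bounded above, while $(\delta-C(t))/(2\beta t)\to+\infty$ as $t\to0$, so $\{\f_t<\f_0-\delta\}$ is empty for all sufficiently small $t$. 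With that one computation redone, your proof is complete.
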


We are now in the position to prove the following comparison principle.  

\begin{thm}
\label{thm: comparison zero}
Let $\f_0, \p_0\in \psh(X,\omega)$ be such that $c(\f_0)=+\infty$. Let $(\f_t), (\p_t)$ be weak solutions of the equation (\ref{eq: parabolic}) starting from $\f_0$ and $\p_0$ respectively. Then 
$$
\f_0\leq \p_0 \Longrightarrow \f_t\leq \p_t, \forall t>0.
$$  
In particular, if $c(\f_0)=+\infty$ then there is a unique weak solution of the equation (\ref{eq: parabolic}) with initial data $\f_0$. 
\end{thm}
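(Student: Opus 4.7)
The strategy is a classical parabolic maximum principle on the shifted flows, combined with a quantitative boundary control at $t=0$ coming from Lemma~\ref{lem: lower bound on varphi t}. First observe that $\p_0 \geq \f_0$ forces $e^{-2\lambda\p_0} \leq e^{-2\lambda\f_0}$ pointwise for every $\lambda > 0$, so Skoda-integrability transfers from $\f_0$ to $\p_0$ and $c(\p_0) = +\infty$ as well; condition (iii) in the definition of a weak K\"ahler--Ricci flow then yields $D_\vep = \emptyset$ for all $\vep > 0$ on both sides, hence $\f_t$ and $\p_t$ are smooth on the entire $(0, T_{\max}) \times X$ and satisfy \eqref{eq: parabolic} there classically.

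Fix $T < T_{\max}$. For any $\vep > 0$, both $\f$ and $\p$ are smooth classical solutions of the same parabolic Monge--Amp\`ere equation on $[\vep, T] \times X$, so Proposition~\ref{prop: clas_comp_prin} yields
\[
\sup_X(\f_t - \p_t) \leq \sup_X(\f_\vep - \p_\vep), \qquad t \in [\vep, T].
\]
The entire statement thus reduces to the boundary estimate $\limsup_{\vep \to 0^+} \sup_X(\f_\vep - \p_\vep) \leq 0$. To establish it I would combine three inputs: Lemma~\ref{lem: lower bound on varphi t} applied to the weak solution $\p$, which for $\beta > 1/(2T_{\max})$ fixed and $\vep$ small enough gives the pointwise bound $\p_\vep \geq (1-2\beta\vep)\p_0 - C(\vep)$ with $C(\vep) \downarrow 0$; the upper semi-continuity of $(t,x) \mapsto \f_t(x)$ on $[0, T_{\max}) \times X$ that follows from condition (ii) of the weak-solution definition together with the Hartogs-type lemma stated just after Proposition~\ref{prop: gen comp prin}; and the initial inequality $\f_0 \leq \p_0$. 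Arguing by compactness and contradiction, a failure of the boundary estimate would produce $\eta > 0$ together with sequences $\vep_k \downarrow 0$ and $x_k \to x^* \in X$ with $\f_{\vep_k}(x_k) - \p_{\vep_k}(x_k) \geq \eta$; upper semi-continuity of $\f_\cdot(\cdot)$ at $(0, x^*)$ forces $\limsup_k \f_{\vep_k}(x_k) \leq \f_0(x^*) \leq \p_0(x^*)$, whilst Lemma~\ref{lem: lower bound on varphi t} combined with the upper semi-continuity of $\p_0$ at $x^*$ forces $\liminf_k \p_{\vep_k}(x_k) \geq \p_0(x^*)$, and these two together give $0 \geq \eta$, the desired contradiction. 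Uniqueness, the \emph{in particular} clause, is then immediate upon specializing to $\f_0 = \p_0$ and running the comparison in both directions.

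\textbf{Main obstacle.} The truly delicate point in the contradiction step is the case $\p_0(x^*) = -\infty$: the pointwise bound from Lemma~\ref{lem: lower bound on varphi t} degenerates to $\p_{\vep_k}(x_k) \geq -\infty$, both $\f_{\vep_k}(x_k)$ and $\p_{\vep_k}(x_k)$ tend to $-\infty$, and one must carefully compare rates. The natural fix is to pass through continuous truncations $\p_0^M$ of $\p_0$ from above, for which the associated weak K\"ahler--Ricci flow attains the initial datum uniformly by \cite{ST09} so the boundary estimate is trivial, and then let $M \to +\infty$, using the uniform $L^\infty$ bound for the auxiliary Monge--Amp\`ere solution $v_\vep$ appearing in the proof of Lemma~\ref{lem: lower bound on varphi t} (itself uniform thanks to Kolodziej's theorem and Zeriahi's strengthening of Skoda integrability, available precisely because $c(\f_0) = +\infty$) to control the limit. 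Once the boundary estimate is secured the rest of the argument is completely routine.
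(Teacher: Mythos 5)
Your reduction to the boundary estimate $\limsup_{\vep\to 0}\sup_X(\f_\vep-\p_\vep)\leq 0$ is fine, but your proof of that estimate has a genuine gap: in the contradiction argument you need $\liminf_k \p_{\vep_k}(x_k)\geq \p_0(x^*)$, and you invoke Lemma \ref{lem: lower bound on varphi t} together with the \emph{upper} semicontinuity of $\p_0$; but upper semicontinuity gives $\limsup_k\p_0(x_k)\leq \p_0(x^*)$, i.e. exactly the wrong direction. Lemma \ref{lem: lower bound on varphi t} bounds $\p_{\vep_k}(x_k)$ from below by $(1-2\beta\vep_k)\p_0(x_k)-C(\vep_k)$, and since $\p_0$ is merely upper semicontinuous (never lower semicontinuous unless continuous), $\p_0(x_k)$ may lie far below $\p_0(x^*)$ along $x_k\to x^*$; so the chain of inequalities breaks already when $\p_0(x^*)$ is finite but $\p_0$ is discontinuous at $x^*$ --- not only in the polar case you flag. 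A pointwise, sequential argument of this kind cannot work for data that are only upper semicontinuous; one must compare against something continuous at $t=0$. Your proposed repair via continuous truncations $\p_0^M$ has a second problem: the flows you can actually control from $\p_0^M$ (the Song--Tian or maximal ones) decrease, as $M\to+\infty$, to the \emph{maximal} flow from $\p_0$, not to the given arbitrary weak solution $\p_t$; so at best you obtain $\f_t\leq$ (maximal flow from $\p_0$), which is Lemma \ref{lem: actually maximal} in disguise, and deducing $\f_t\leq\p_t$ from it presupposes precisely the uniqueness being proved.

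The paper's proof avoids the issue by shifting time on the $\p$-side instead of estimating at $t=\vep$: it sets $v_t:=\p_{t+\vep}+C(\vep)$, which is smooth up to $t=0$ (because $c(\p_0)=+\infty$) and satisfies $v_0\geq\p_0\geq\f_0$ by Lemma \ref{lem: lower bound on varphi t}; then $\f$ is a weak subsolution of the $\vep$-shifted equation provided $\chi\geq 0$, and the generalized comparison principle (Proposition \ref{prop: gen comp prin}) --- whose proof applies Hartogs' lemma to $\sup_X(\f_{\vep'}-v_{\vep'})$ against the \emph{continuous} family $v_{\vep'}$ --- gives $\f_t\leq v_t$; letting $\vep\to0$ yields the claim, and the sign restriction on $\chi$ is removed by the change of variables $u_t=e^{t}\f_{1-e^{-t}}$. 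If you wish to keep your scheme, the fix is the same idea: compare $\f_t$ not with $\p_\vep$ but with the time-shifted, continuous-at-zero flow $\p_{\cdot+\vep}+C(\vep)$, so that Hartogs' lemma, rather than pointwise semicontinuity, does the work at the boundary.
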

\begin{proof}
We devide the proof into two steps. 

\noindent{\bf Step 1.} We  assume that $\chi\geq 0$. Fix $\vep>0$ and consider the following function
$$
v_t:= \p_{t+\vep} + C(\vep), 
$$
where $C(\vep)$ is the constant defined in Lemma \ref{lem: lower bound on varphi t}. Accordingly we have  that $v_0\geq \p_0$. We compute 
$$
\dot{v}_t = \dot{\p}_{t+\vep} = \log \left[\frac{(\omega+ (t+\vep)\chi + dd^c v_t)^n}{\omega^n}\right].
$$
Since $\chi\geq 0$ and $\f_t$ is a weak solution of (\ref{eq: parabolic}) it follows that
$$
\dot{\f}_t = \log \left[\frac{(\omega+ t\chi + dd^c \f_t)^n}{\omega^n}\right] \leq \log \left[\frac{(\omega+ (t+\vep)\chi + dd^c \f_t)^n}{\omega^n}\right].
$$
Our computations above show that  $v_t$ is a smooth solution of the following parabolic equation
$$
\dot{v}_t = \log \left[\frac{(\omega+ (t+\vep)\chi + dd^c v_t)^n}{\omega^n}\right],
$$ 
while $\f_t$ is a weak subsolution. Since $\f_0\leq \p_0\leq v_0$
it follows from the generalized comparison principle (Proposition \ref{prop: gen comp prin}) that $\f_t\leq v_t$. Since $C(\vep)\searrow 0$ as $\vep\searrow 0$ the result follows.

\medskip

\noindent{\bf Step 2.} We remove the non-negative assumption on $\chi$. The idea is to make a change of variable to get rid of the sign of $\chi$. Without loss of generality we can assume that $T_{\max}>1$ and hence $\chi > -\omega$. We make the following change of variable
$$
u_t:= e^{t}\f_{1-e^{-t}}. 
$$
Then $u_t$ satisfies the following equation
$$
\dot{u}_t = u_t -nt + \log  \left[\frac{(\omega+ (e^{t}-1)\chi' + dd^c u_t)^n}{\omega^n}\right],
$$
where $\chi'=\omega+\chi >0$. Thus this change of variable gives  us another equation with the twisted form $\chi'>0$. We then can apply our techniques in the first step. For the reader's convenience we  explain the proof in  full details. 

Fix $\vep>0$ small enough and consider the following families: 
$$
u_t:=e^{t}\f_{1-e^{-t}}\ , \ v_t:=e^{t+\vep}\p_{1-e^{-t-\vep}} +n\vep (e^t-1) + e^{t+\vep}C(1-e^{-\vep}),
$$
where $C(s)$ is the constant defined in Lemma \ref{lem: lower bound on varphi t}. Observe that $v$ is smooth on $[0,t_0]\times X$, where $t_0$ is a fixed positive constant such that $1-e^{-\vep-t_0}<T_{\max}$. We can assume that $\f_t$ and $\p_t$ are negative for small $t$. Then it follows from Lemma \ref{lem: lower bound on varphi t} that  
$$
v_0\geq \f_0 =u_0. 
$$
Since $\chi':= \chi+\omega\geq 0$ it follows that 
$$
\dot{u_t}\leq u_t-nt+\log \left[\frac{(\omega+ (e^{t+\vep}-1)\chi' + dd^c u_t)^n}{\omega^n}\right],
$$
and 
$$
\dot{v_t} = v_t - nt + \log \left[\frac{(\omega+ (e^{t+\vep}-1)\chi' + dd^c v_t)^n}{\omega^n}\right].
$$
It thus follows from the comparison principle (Proposition \ref{prop: gen comp prin}) that $u_t\leq v_t, \forall t\in [0,t_0]$. Letting $\vep\searrow 0$ we obtain $\f_t\leq \p_t$ for any $t\in (0,t_0)$. Then we can conclude $\f_t\leq \p_t$ for any $t\in (0,T_{\max})$. 
\end{proof}

The following stability result  follows from the uniqueness of the flow.  

\begin{thm}
\label{thm: stability zero}
Let $\f_0$ be a $\omega$-psh function and $\f_{0,j}$ a sequence of $\omega$-psh functions such that $\f_{0,j}\rightarrow \f_0$ in $L^1$. 
Assume also that $c(\f_0)=+\infty$. Denote by $\f_{t,j}$ and $\f_t$  the solutions of (\ref{eq: parabolic}) with initial data $\f_{0,j}$ and $\f_0$ respectively. Then for each $t>0$,
$$
\f_{t,j} \rightarrow\f_t \quad \mbox{in} \;\;\Cc^{\infty}(X) \ \mbox{as}\quad j\to +\infty.
$$
\end{thm}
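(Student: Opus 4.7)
The plan is to prove the theorem by a compactness-plus-uniqueness argument. Since $c(\f_0)=+\infty$, Theorem \ref{thm: comparison zero} provides a unique weak solution starting from $\f_0$, so it suffices to show that every $\Cc^{\infty}$-subsequential limit of $(\f_{t,j})$ is a weak solution with initial datum $\f_0$. A standard subsequence-of-subsequences argument then yields $\Cc^{\infty}$ convergence of the whole sequence on compact subsets of $(0,T_{\max})\times X$, which in particular gives $\f_{t,j}\to\f_t$ in $\Cc^{\infty}(X)$ for every fixed $t>0$.

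I would first obtain uniform $\Cc^k$-bounds on $\f_{t,j}$ on $[\vep,T]\times X$ for every $0<\vep<T<T_{\max}$. Because $c(\f_0)=+\infty$, Lemma \ref{lem: Dem} allows one to pick the auxiliary function $\psi$ to be smooth on all of $X$. With such a $\psi$ fixed, the constant $E_2$ of Section \ref{sec: Estimates} is independent of $j$, while $E_1^j$ is comparable to $\int_X e^{-2\f_{0,j}/\vep_0}\omega^n$. The strong openness theorem of Berndtsson, applied to the $L^1$-convergent sequence $\f_{0,j}\to \f_0$ with $c(\f_0)=+\infty$, shows that $\int_X e^{-2\lambda \f_{0,j}}\omega^n$ is uniformly bounded in $j$ for every $\lambda>0$. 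Consequently the $\Cc^0$, $\Cc^1$-in-time, and $\Cc^2$ estimates of Theorem \ref{thm: C0-estimate}, Proposition \ref{prop: bound below dot} and Theorem \ref{thm: C2 est} are uniform in $j$; the complex parabolic Evans-Krylov theory together with Schauder's estimate then yield uniform $\Cc^k$-bounds for every $k$, exactly as in the proof of Theorem A.

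By Arzel\`a-Ascoli and a diagonal extraction, some subsequence converges to a function $u$ in $\Cc^{\infty}_{\rm loc}((0,T_{\max})\times X)$; the limit is smooth, each $u_t$ is $\theta_t$-psh, and $u$ solves (\ref{eq: parabolic}) pointwise. To invoke Theorem \ref{thm: comparison zero} I must verify $u_t\to\f_0$ in $L^1$ as $t\to 0$. For the upper bound introduce the Hartogs-type envelope $\phi_j:=(\sup_{k\geq j}\f_{0,k})^*$, a decreasing sequence of $\omega$-psh functions with $\phi_j\searrow \f_0$ and $\phi_j\geq \f_{0,k}$ for $k\geq j$; since $\phi_j\geq \f_0$ one has $c(\phi_j)=+\infty$, and Theorem \ref{thm: comparison zero} produces the unique weak solution $\Phi_{t,j}$ with initial datum $\phi_j$. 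The comparison principle for maximal flows (Remark \ref{rem: comparison maximal}) yields $\f_{t,k}\leq \Phi_{t,j}$ for $k\geq j$, hence $u_t\leq \Phi_{t,j}$ for every $j$; combined with $\Phi_{t,j}\to\phi_j$ in $L^1$ as $t\to 0$ and $\phi_j\searrow \f_0$, this gives $\limsup_{t\to 0}\int_X u_t\,\omega^n\leq \int_X\f_0\,\omega^n$. For the lower bound I would re-run the proof of Lemma \ref{lem: lower bound on varphi t} with $\f_{0,j}$ in place of $\f_0$; strong openness again furnishes the uniform bounds on $\int_X e^{-4\beta \f_{\vep,j}}\omega^n$ needed to apply Ko\l{}odziej's $\Cc^0$ estimate, yielding $\f_{t,j}\geq (1-2\beta t)\f_{0,j}-C(t)$ with $C(t)\to 0$ uniformly in $j$. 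Integration and passage to the limit give $\liminf_{t\to 0}\int_X u_t\,\omega^n\geq \int_X\f_0\,\omega^n$. The usual $L^1$-compactness of bounded families of $\omega$-psh functions, applied to any weak subsequential limit $v=\lim u_{t_n}$ together with $v\leq \phi_j$ a.e.\ and $\int v\,\omega^n=\int\f_0\,\omega^n$, forces $v=\f_0$; hence $u_t\to \f_0$ in $L^1$.

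The main obstacle is precisely this continuity at $t=0$: the data $\f_{0,j}$ may carry positive Lelong numbers even though $\f_0$ does not, so the a priori estimates of Section \ref{sec: Estimates} and Lemma \ref{lem: lower bound on varphi t} are available only after uniform integrability of $e^{-2\lambda\f_{0,j}}$ is in hand. Strong openness is what makes this possible, and is what allows both the Hartogs envelope bound from above and the Lemma \ref{lem: lower bound on varphi t}-type bound from below to be made uniform in $j$. Once $u$ is identified as a weak solution starting from $\f_0$, Theorem \ref{thm: comparison zero} forces $u=\f$, and the subsequence-of-subsequences argument completes the proof.
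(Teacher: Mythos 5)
Your proposal is correct and ultimately rests on the same pillars as the paper's proof --- uniform smoothing estimates for $t\geq\vep$, the Hartogs envelope $\bigl(\sup_{k\geq j}\f_{0,k}\bigr)^*$ for the bound from above at $t=0$, a Lemma \ref{lem: lower bound on varphi t}--type bound from below, and uniqueness via Theorem \ref{thm: comparison zero} --- but you organize it as a compactness-plus-uniqueness argument, whereas the paper first treats decreasing sequences by sandwiching $\f_t\leq\f_{t,j}\leq\p_{t,j}$ between the maximal flow and the flows from smooth approximants (monotonicity, Lemma \ref{lem: independent} and Arzel\`a--Ascoli then give convergence with no subsequence extraction), and only afterwards reduces the general case to the decreasing one through the envelope flows $u_{t,j}\searrow\f_t$. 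Your variant buys a self-contained derivation of the uniform $\Cc^k$ bounds from the a priori estimates of Section \ref{sec: Estimates} (with $\p$ smooth, which is legitimate since $c(\f_0)=+\infty$), at the price of the subsequence-of-subsequences step; the paper's route instead identifies the limit monotonically. One correction of substance: the uniform bound $\sup_j\int_X e^{-2\lambda\f_{0,j}}\,\omega^n<+\infty$ is not a consequence of Berndtsson's openness theorem, which concerns a single function at its critical exponent; what you need is the uniform (semicontinuity) version of Skoda's theorem, i.e.\ \cite[Theorem 3.1]{Zer01} (or Demailly--Koll\'ar semicontinuity), which is exactly what the paper invokes inside Lemma \ref{lem: lower bound on varphi t}. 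Moreover this bound, and with it the smoothness of $\f_{t,j}$ on all of $[\vep,T]\times X$ and the uniformity of $E_1^j$, hold only for $j\geq j(\vep,\lambda)$ large, since a fixed $\f_{0,j}$ may well have positive Lelong numbers; this is harmless because only large $j$ matter, but it should be stated explicitly when you claim the estimates of Theorem \ref{thm: C0-estimate}, Proposition \ref{prop: bound below dot} and Theorem \ref{thm: C2 est} are ``uniform in $j$''.
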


\begin{proof}
Assume first that the sequence $(\f_{0,j})$ is deceasing. Then we can find a decreasing sequence $(\p_{0,j})$ of smooth strictly $\omega$-psh functions which converges to $\f_0$ and satisfies
$$
\f_0 \leq \f_{0,j} \leq \p_{0,j}, \ \ \forall j\in \N. 
$$
Let $(\f_{t,j})$ and $(\p_{t,j})$ be the corresponding weak solutions of the equation (\ref{eq: parabolic}) with initial data $\f_{0,j}$ and $\p_{0,j}$ respectively. 
It follows from the comparison principle (Theorem \ref{thm: comparison zero}) that 
$$
\f_t \leq \f_{t,j} \leq \p_{t,j}, \ \ \forall j\in \N. 
$$
 By Lemma \ref{lem: independent} we know that $\p_{t,j} \searrow \f_t$, hence $\f_{t,j} \searrow \f_t$. Moreover it follows from \cite{GZ13} that  for each $\vep>0$, $T<T_{\max}$, $k\in \N$ we have
$$
\Vert\f_j\Vert_{\Cc^{k}([\vep, T]\times X)} \leq C(\vep,T,k), 
$$
for $j>j(\vep)$ large enough. Then using Arzel\`a-Ascoli theorem, we know that, for each $t>0$, 
$\f_{t,j}$ converges to $\f_t$ in $\Cc^{\infty}(X)$ as $j$ goes to $+\infty$.  Hence the result follows. 
\medskip

Now, let us prove the general statement. We can assume that $\varphi_0$ is negative. Recall that, for each $j$ fixed, $\f_{t,j}$ converges in $L^1(X)$ to $\f_{0,j}$ as $t\searrow 0$. The arguments in the previous step also apply in this case yielding that for each $t>0$,
$\f_{t,j}$ converges to some $\psi_t$ in $\Cc^{\infty}(X)$ as $j$ goes to $+\infty$. Note that, by construction, $\psi_t$ is such that
$$
\frac{\partial \psi_t}{\partial t} = \log \left[\frac{(\theta_t+dd^c \psi_t)^n}{\omega^n}\right], \ \forall t>0.
$$
It follows from Lemma \ref{lem: lower bound on varphi t} that 
$$
\f_{t,j} \geq (1-t)\f_{0,j} - C(t), \forall t\in (0,1).
$$
Letting $j\to+\infty$, then $t\searrow 0$ we obtain 
$\liminf_{t\to 0} \p_t \geq \f_0$. To see the reverse inequality we set 
$$
u_{0,j}:= \left(\sup_{k\geq j}\f_{0,k}\right)^* \searrow \f_0.
$$
Let $u_{t,j}$ be the maximal solution of (\ref{eq: parabolic}) with initial data $u_{0,j}$. By the comparison principle we have 
$$
u_{t,j} \geq \f_{t,j}. 
$$
It follows from the previous step that 
 $(u_{t,j})$ decreases to $\f_t$, hence $\p_t\leq \f_t$. Since $\f_t$ converges in $L^1(X)$ to $\f_0$, we infer that $\p_t$ converges in $L^1(X)$ to $\f_0$. From the comparison principle (Theorem \ref{thm: comparison  zero}) we obtain $\f_t\equiv\p_t$.
\end{proof}

\subsection{Currents with positive Lelong numbers}
In this section we try and prove the uniqueness result for the general case: initial currents with positive Lelong numbers. We fix the following notations.
Assume that $\f_0$ is a $\omega$-psh function on $X$, $\eta$ is a smooth closed $(1,1)$-forms on $X$. Assume also that 
$$
\frac{1}{2c(\f_0)} < T_{\max}:=\sup\left\{t>0\setdef \{\omega\}+t\eta -t c_1(X)>0\right\}.   
$$
Fix two positive constants $\alpha, \beta$ such that 
$$
\frac{1}{2c(\f_0)} < \frac{1}{2\beta}<\frac{1}{2\beta-\alpha} < T_{\max}.
$$
Recall that the twisted K\"ahler-Ricci flow reads in the level of potentials as:
\begin{equation}
\label{eq: parabolic 2}
\dot{\f_t} = \log \left[\frac{(\theta_t +dd^c \f_t)^n }{\omega^n}\right],
\end{equation}
where $\theta_t:=\omega+t\eta-t\Ric(\omega)$. We recall the definition of weak solutions of the parabolic equation (\ref{eq: parabolic 2}).
\begin{defi}
\label{def: weak flow}
A function $\f:(0,T_{\max}) \times X \to \R$ is called a weak solution of the equation (\ref{eq: parabolic 2}) starting from $\f_0$ if the following conditions are satisfied:
\begin{itemize}
\item[(i)] For each $t>0$, the function $\f_t$ is $\theta_t$-psh on $X$. Moreover the function $t\mapsto \f_t$
is continuous as a map from $[0,T_{\max})$ to $L^1(X)$.
\item[(ii)]  For each $\vep>0$, there exists an analytic subset $D_{\vep}\subset X$ such that the function 
$$
(t,x)\mapsto \f_{t}(x)
$$
is smooth, strictly $\theta_t$-psh on $[\vep,T_{\max})\times (X\setminus D_{\vep})$ and satisfies the equation (\ref{eq: parabolic 2}) in the classical sense. Moreover, $D_{\vep}$ is empty if $\vep>1/(2c(\f_0))$. 
\end{itemize}
\end{defi}

We first show  that the maximal solution constructed in \cite{GZ13} is indeed  the maximal solution meaning that it dominates every weak solutions (in the sense of Definition \ref{def: weak flow}). 
\begin{lem}
\label{lem: actually maximal}
Let $\f_t$ be the maximal solution of (\ref{eq: parabolic 2}) starting from $\f_0$ and let $\p_t$ be a weak solution with initial data $\p_0$. Then 
$$
\p_0 \leq \f_0 \Longrightarrow \p_t \leq \f_t, \ \ \forall t\in (0,T_{\max}). 
$$
\end{lem}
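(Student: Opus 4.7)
The plan is to compare the weak solution $\p_t$ pointwise with each smooth approximant $\f_{t,j}$ from the construction in Section \ref{construction} (where $\f_{0,j}\searrow \f_0$), and then let $j\to\infty$ to conclude $\p_t\leq \f_t$ from $\f_{t,j}\searrow \f_t$. Since $\p_0\leq \f_0\leq \f_{0,j}$, this reduces to showing $\p_t\leq \f_{t,j}$ for every $j$ and every $t\in(0,T_{\max})$. The main obstacle is that Proposition \ref{prop: gen comp prin} requires $c(\p_0)=+\infty$, whereas here $\p_t$ may carry singularities on a nonempty analytic subset $D_\vep$, so the classical parabolic maximum principle cannot be applied directly to $\p_t-\f_{t,j}$.

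I will circumvent this by a penalty argument. Fix $j$ and $T<T_{\max}$; for each small $\vep>0$, choose a quasi-plurisubharmonic function $\rho_\vep\leq 0$ on $X$ that is smooth on $X\setminus D_\vep$, identically $-\infty$ on $D_\vep$, and satisfies $\omega+dd^c\rho_\vep\geq -A_\vep\omega$ (such a function is built from a defining section of the analytic set $D_\vep$). For small parameters $\delta,\eta>0$, apply the maximum principle to
\[
F(t,x) := \p_t(x) + \delta\rho_\vep(x) - \f_{t,j}(x) - \eta t,\qquad (t,x)\in[\vep,T]\times X.
\]
Since $F=-\infty$ on $D_\vep$, the supremum of $F$ is attained at some $(t_0,x_0)$ with $x_0\in X\setminus D_\vep$, where $F$ is smooth. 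If $t_0>\vep$, the conditions $\dot F\geq 0$ and $dd^cF\leq 0$ yield
\[
\dot\p_{t_0}(x_0)-\dot\f_{t_0,j}(x_0)\geq \eta \quad \text{and} \quad \theta_{t_0}+dd^c\p_{t_0}\leq \theta_{t_0}+dd^c\f_{t_0,j}+\delta A_\vep\omega.
\]
Taking the logarithm of the determinant in the second inequality and using the smooth positive lower bound $\theta_t+dd^c\f_{t,j}\geq c_j\omega$ on $[\vep,T]\times X$, I obtain $\dot\p_{t_0}-\dot\f_{t_0,j}\leq n\log(1+\delta A_\vep/c_j)$. Choosing $\delta$ small enough relative to $\eta$ contradicts the lower bound $\eta$; hence the maximum of $F$ must be attained at $t=\vep$, giving the estimate $F\leq \sup_X(\p_\vep-\f_{\vep,j})-\eta\vep$ throughout $[\vep,T]\times X$.

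Sending first $\delta\to 0$ and then $\eta\to 0$ yields $\p_t\leq \f_{t,j}+\sup_X(\p_\vep-\f_{\vep,j})$ on $(\vep,T]\times(X\setminus D_\vep)$. Upper semicontinuity of $\p_t$ together with continuity of $\f_{t,j}$ extend this inequality to all of $X$, since $D_\vep$ is nowhere dense. Finally, Hartogs' lemma applied to the $L^1$-convergence $\p_\vep\to \p_0$ and the uniform convergence $\f_{\vep,j}\to \f_{0,j}$, combined with $\p_0\leq \f_0\leq \f_{0,j}$, gives
\[
\limsup_{\vep\to 0}\sup_X(\p_\vep-\f_{\vep,j}) \leq \sup_X(\p_0-\f_{0,j})\leq 0.
\]
Therefore $\p_t\leq \f_{t,j}$ on $(0,T]\times X$, and letting $j\to\infty$ gives $\p_t\leq \f_t$. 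The essential difficulty is the maximum-principle step: pushing the supremum of $F$ away from the singular locus $D_\vep$ via the penalty $\delta\rho_\vep$ is precisely what replaces the finer pluripotential comparison needed in Proposition \ref{prop: gen comp prin}; once this is done, the interior argument is classical and the initial time is handled exactly as in the proof of Proposition \ref{prop: gen comp prin} via Hartogs.
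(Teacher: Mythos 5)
Your proof is correct, and its skeleton coincides with the paper's: reduce to comparing $\p_t$ with each smooth approximant $\f_{t,j}$, force the maximum of a penalized difference away from the singular locus by adding a small multiple of a quasi-psh function that is $-\infty$ on $D_\vep$, rule out a maximum at $t_0>\vep$ by the parabolic maximum principle, and handle the initial time via Hartogs' lemma before letting $j\to\infty$. The genuine difference is how the perturbation term is absorbed at the maximum point. The paper compares $\p_{t+\vep}$ with the time-shifted $\f_{t+\vep+\delta,j}$, so that the majorant's reference form carries an extra $\delta\chi$ which dominates $-\delta\,dd^c\phi$ provided $\chi\ge\omega$; since that sign condition need not hold, the paper then has to invoke the exponential change of variables of Section 5. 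You instead compare at equal times and absorb $\delta(1+A_\vep)\omega$ into the strict K\"ahler lower bound $\theta_t+dd^c\f_{t,j}\ge c_j\omega$ on $[\vep,T]\times X$ (valid because $\f_{t,j}$ is a fixed smooth flow on a compact time interval), with the $-\eta t$ term supplying the strict inequality $\dot\p_{t_0}-\dot\f_{t_0,j}\ge\eta$ at the maximum; this removes both the time shift and the change of variables, with no hypothesis on the sign of $\chi$, at the harmless cost of constants depending on $j,\vep,T$ which disappear when $\delta\to0$ is taken first. Two small points to tighten: the inequality obtained on $X\setminus D_\vep$ extends across $D_\vep$ because $\p_t$ is quasi-plurisubharmonic and $D_\vep$ has Lebesgue measure zero (sub-mean value inequality), not merely because $\p_t$ is upper semicontinuous and $D_\vep$ is nowhere dense; and to know that the supremum of $F$ is attained one should note that $(t,x)\mapsto\p_t(x)$ is upper semicontinuous in both variables (the lemma in Section 2 of the paper), hence bounded above on $[\vep,T]\times X$, so that $F\to-\infty$ uniformly near $[\vep,T]\times D_\vep$.
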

\begin{proof}
Let $\f_{0,j}$ be the smooth sequence decreasing to $\f_0$ and $\f_{t,j}$ be the smooth sequence of approximants running from $\f_{0,j}$ and decreasing poitwise to $\f_t$. It thus suffices to show
that $\psi_t \leq \f_{t,j}$ for any fixed $j$. To simplify the notation, in the remaining part of the proof we drop the index $j$.  Fix $0<T<T_{\max}$ and $\varepsilon>\delta>0$. Denote by $Y_\epsilon$ the analytic set outside of which the weak solution $\psi_t$ is smooth for $t\geq \varepsilon$. Consider
$$H(t,x):= \psi_{t+\varepsilon}-\f_{t+\vep+\delta}+\delta\phi\qquad \; t\in[0,T],\; x\in X_\varepsilon$$
where $\phi\leq 0$ is a $\omega$-psh function that is smooth on $X_\varepsilon:= X\setminus Y_\varepsilon$ and goes to $-\infty$ on $Y_\varepsilon$. 
Observe that by construction $H$ is upper semicontinuos on $[0,T-\varepsilon]\times X_\varepsilon$ and $\phi=-\infty$ on $Y_\varepsilon$. This implies that $H$ achieves its maximum at some point $(t_0,x_0)\in [0,T-\varepsilon]\times X_\varepsilon$. We claim that $t_0=0$. Indeed, arguing by contradiction we get that at $(t_0,x_0)$
\begin{eqnarray*}
0\leq \frac{\partial}{\partial t}H &=& \dot{\psi}_{t+\varepsilon}-\dot{\f}_{t+\vep+\delta}\\
&=& \log\left[ \frac{(\theta_t+\varepsilon\chi +dd^c\psi_{t+\varepsilon} )^n}{(\theta_t +(\vep+\delta)\chi + dd^c\f_{t+\vep+\delta} )^n} \right]\\
&\leq &  \log\left[ \frac{(\theta_t +dd^c\f_{t+\vep+\delta} + \vep\chi - \delta dd^c \phi)^n}{(\theta_t +(\vep+\delta)\chi +dd^c\f_{t+\vep+\delta} )^n} \right] < 0
\end{eqnarray*}
where in the last inequality we assume $\chi\geq \omega$. Observe that this last assumption costs no generality since otherwise we can perform a change of variables and argue as in Section \ref{subsect: zero LeLong}. Thus we have, by letting $\delta\searrow 0$,
$$\p_{t+\vep}-\f_{t+\vep}\leq H(0,x_0)\leq \sup_X (\psi_{\varepsilon}-\f_{\vep}).$$
Furthermore, since $(t,x)\rightarrow \f_t(x)$ is smooth, and in particular continuous, it follows from Hartogs' lemma that
$$
\sup_X (\psi_{\varepsilon}-\f_{\vep})  \rightarrow \sup_X (\p_0-\f_{0})\leq 0.
$$
The conclusion follows when we let $\varepsilon$ go to zero.
\end{proof}

In the remaining part of this subsection we prove a "partial" uniqueness result in the case of positive Lelong numbers. 

\begin{lem}
\label{lem: initial inequ}
Assume that $\f_0$ is as above. Let $(\f_t)$ be a weak solution  of the equation (\ref{eq: parabolic 2}) starting from $\f_0$ and assume that $\f_t$ and  the maximal solution have the same singularities.
Then 
$$
(1-2\beta t)\f_0 - C(t) \leq  \f_t , \ \ \forall 0<t<(2\beta)^{-1},
$$
where $C(t)$ is a uniform constant depending only on $t$, and converges to zero as $t$ goes to zero. 
\end{lem}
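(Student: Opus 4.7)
I would follow the approach of Lemma~\ref{lem: lower bound on varphi t} and adapt it to the positive-Lelong regime, using the same-singularities hypothesis to control the auxiliary Monge--Amp\`ere potential. For $\vep>0$ small, set $u_t := \f_{t+\vep}$; by the definition of weak solution, $u$ is smooth outside an analytic set $D_\vep$ and solves a time-shifted version of (\ref{eq: parabolic 2}) starting from $u_0 = \f_\vep$. Applying \cite[Lemma 2.9]{GZ13} to $u_t$ would yield
\begin{equation*}
(1-2\beta t)\f_\vep + \alpha t v_\vep + n(t\log t - t) \leq \f_{t+\vep}, \qquad t \in [0,(2\beta)^{-1}-\vep),
\end{equation*}
where $v_\vep \in \psh(X,\omega)$ is the unique solution of $\alpha^n(\omega + dd^c v_\vep)^n = e^{\alpha v_\vep - 2\beta \f_\vep}\omega^n$.

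The main obstacle, in contrast with Lemma~\ref{lem: lower bound on varphi t}, is that here $\beta > c(\f_0)$, so $e^{-4\beta \f_\vep}$ is not uniformly in $L^1$ as $\vep\searrow 0$; consequently Ko\l{}odziej's $\Cc^0$-estimate no longer yields a uniform $L^\infty$-bound on $v_\vep$, and the argument of Lemma~\ref{lem: lower bound on varphi t} breaks down at this point. To circumvent this I would invoke the standing hypothesis that $\f_t$ has the same singularities as the maximal flow $\Phi_t$. Since $\f_\vep$ and $\Phi_\vep$ then differ by a bounded quantity, and since the singularities of $\Phi_\vep$ are controlled by the results of Section~\ref{sec: Regularity} (most notably by Theorem~\ref{thm: C0-estimate} and the estimates from \cite{DL14a, DL14b}), one expects a generalized-capacity estimate that replaces Ko\l{}odziej's bound and produces a lower bound of the form $v_\vep \geq a\,\f_\vep - C$ for constants $a, C > 0$ independent of $\vep$.

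Plugging this bound back into the displayed inequality and letting $\vep\searrow 0$ (using the $L^1$-continuity of $t\mapsto \f_t$ required of a weak solution), I would obtain the desired estimate $(1-2\beta t)\f_0 - C(t) \leq \f_t$ with $C(t)\to 0$ as $t\to 0$, by absorbing the residual contribution $-n(t\log t - t)$ together with the bounded terms coming from the $v_\vep$-estimate into a single $C(t)$. The technical heart of the argument is the precise choice of the constant $a$: it must be compatible with the coefficient $(1-2\beta t)$ on the right-hand side after the substitution, which is why the standing conditions $2\beta > 1/T_{\max}$ and $2\beta-\alpha > 1/T_{\max}$, together with the fact that $\f_\vep$ and $\Phi_\vep$ share their singularities, are crucially used.
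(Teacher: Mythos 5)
Your proposal misidentifies both the obstacle and the role of the same-singularity hypothesis. First, the standing assumption in this subsection is $\frac{1}{2c(\f_0)}<\frac{1}{2\beta}$, i.e.\ $\beta<c(\f_0)$, not $\beta>c(\f_0)$: hence $e^{-2\beta\f_\vep}$ lies in $L^p$ for some $p>1$ uniformly in $\vep$, Ko\l{}odziej's estimate \cite{Kol98} applies as in Lemma \ref{lem: lower bound on varphi t}, and the solution $v_\vep$ of $\alpha^n(\omega+dd^c v_\vep)^n=e^{\alpha v_\vep-2\beta\f_\vep}\omega^n$ is uniformly bounded (the paper says so explicitly). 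So the ``main obstacle'' you propose to circumvent does not exist, and your substitute bound $v_\vep\geq a\,\f_\vep-C$ is both unnecessary and harmful: plugging it into the barrier replaces the coefficient $(1-2\beta t)$ by $(1-(2\beta-a\alpha)t)$, which, since $\f_0\leq 0$, yields a strictly weaker inequality than the one claimed.

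Second, and more seriously, you apply \cite[Lemma 2.9]{GZ13} to the shifted flow $u_t:=\f_{t+\vep}$ as a black box, exactly as in the zero-Lelong case. That lemma is a global maximum-principle statement for flows that are smooth on all of $X$; here the weak solution is smooth only on $X_\vep:=X\setminus Y_\vep$ and may tend to $-\infty$ along the analytic set $Y_\vep$, so the comparison function may fail to attain its extremum at a point where the equation holds classically, or may even be unbounded above. This is the real difficulty, and it is precisely where the same-singularity hypothesis enters in the paper: one reruns the maximum principle for
\begin{equation*}
H(t,x)=(1-2\beta t)\f_\vep+\alpha t v_\vep+n(t\log t-t)-\f_{t+\vep}+\delta t\phi ,
\end{equation*}
where $\phi\leq 0$ is $\omega$-psh, smooth on $X_\vep$ and tends to $-\infty$ on $Y_\vep$. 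The hypothesis that $\f_t$ has the same singularities as the maximal flow guarantees $\sup_t\sup_{X_\vep}\bigl(u_t-\f_{t+\vep}\bigr)<+\infty$, so the upper semicontinuous function $H$ attains its maximum at some $(t_0,x_0)$ with $x_0\in X_\vep$; the choice $(2\beta-\alpha-\delta)\omega+(1+2\beta\vep)\chi>0$ together with the Monge--Amp\`ere equation for $v_\vep$ then forces $t_0=0$, and one concludes by letting $\delta\to 0$ and $\vep\to 0$. This localization step (the auxiliary barrier $\delta t\phi$ and the finiteness of the sup off $Y_\vep$) is absent from your argument, so as written the proof has a genuine gap.
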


\begin{proof}
We apply again the techniques in \cite[Lemma 2.9]{GZ13}. Fix two small positive constants $\vep, \delta$ such that 
\begin{equation}
\label{eq: ineq at zero 1}
(2\beta -\alpha -\delta)\omega +(1+2\beta \vep) \chi >0. 
\end{equation}
Consider the following function
$$
u_t= (1-2\beta t)\f_{\vep}+\alpha t v_{\vep} + n(t\log t -t), 
$$
where $v_{\vep}$ is the unique bounded function in $\psh(X,\omega)$ such that
\begin{equation}
\label{eq: ineq at zero 2}
\alpha^n (\omega +dd^c v_{\vep})^n = e^{\alpha v_{\vep}-2\beta\f_{\vep}}\omega^n. 
\end{equation}
Observe also that $v_{\vep}$ is uniformly bounded by a constant independent of $\vep$. 

Let $Y_{\vep}$ be the analytic subset outside of which the maximal solution is smooth for $t\geq \vep$.
Since $\f_t$ has the same singularities as the maximal solution, it ifollows that 
$$
\sup_{t\in [0,(2\beta)^{-1}]}\sup_{X_\vep} (u_t -\f_{t+\vep}) <+\infty.
$$
Here $X_{\vep}:=X\setminus Y_{\vep}$. Let $\phi$ be a negative $\omega$-plurisubharmonic function on $X$ which is smooth on $X_\vep$ and goes to $-\infty$ on $Y_{\vep}$.  By our construction the following function
$$
H(t,x) = u_t(x) - \f_{t+\vep}(x) + \delta t \phi , \ \ t\in [0,(2\beta)^{-1}], \ x\in X_{\vep},
$$
is upper semi-continuous on $[0,(2\beta)^{-1}]\times X_{\vep}$. Moreover $H(t,x)$ converges to $-\infty$ uniformly in $t$ as $x$ approaches $Y_{\vep}$. It then follows that $H$ attains a maximum at some $(t_0,x_0) \in [0,(2\beta)^{-1}]\times X_{\vep}$. By the choice of $\delta$ and $\vep$ and by the construction of $u_t$ it is easy to see that $t_0=0$. If it was not the case then by the maximum principle we would have, at this point,
\begin{equation}
\label{eq: ineq at zero 3}
0\leq \frac{\partial}{\partial t} H=\alpha v_{\vep}-2\beta\f_{\vep} +n\log t
 - \log \left[\frac{(\omega+ (t+\vep)\chi + dd^c \f_{t+\vep})^n}{\omega^n}\right].
\end{equation}
In the other hand, by (\ref{eq: ineq at zero 1}) and by the construction of  $u_t$ we have
\begin{equation*}
\alpha t(\omega + dd^c v_{\vep}) < \omega + (t+\vep) \chi + \delta t dd^c \phi  + dd^c u_t.
\end{equation*}
At the point $(t_0,x_0)$ where $H$ attains its maximum we have 
\begin{equation}
\label{eq: ineq at zero 4}
\alpha t(\omega + dd^c v_{\vep}) < \omega + (t+\vep) \chi + dd^c \f_{t+\vep}.
\end{equation}
Now, from (\ref{eq: ineq at zero 2}), (\ref{eq: ineq at zero 3}) and (\ref{eq: ineq at zero 4}) we get a contradiction.
Letting $\delta$ go to $0$ we obtain $u_t\leq \f_{t+\vep}$. We finally let $\vep\to 0$ to conclude. 
\end{proof}

We will prove the following uniqueness result.
\begin{thm}
\label{thm: uniqueness pos}
Assume that $\f_0$ is as above. Let $(\f_t)$ be a weak solution  of the equation (\ref{eq: parabolic 2}) starting from $\f_0$ and assume that $\f_t$ has the same singularities as the maximal solution.
Then $\f_t$ is also maximal.
\end{thm}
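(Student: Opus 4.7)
Since $\varphi_t\leq \varphi^{\max}_t$ already follows from Lemma \ref{lem: actually maximal}, the remaining task is the reverse inequality $\varphi^{\max}_t\leq \varphi_t$. My plan is to adapt the two-step strategy of Theorem \ref{thm: comparison zero}, replacing Lemma \ref{lem: lower bound on varphi t} by its singular analogue Lemma \ref{lem: initial inequ}, and inserting a penalty-function argument of the type used in Lemma \ref{lem: actually maximal} to cope with the common analytic singular locus.

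As in Step 2 of Theorem \ref{thm: comparison zero}, an iterated change of variable of the form $u_t=e^{kt}\varphi_{(1-e^{-kt})/k}$ reduces me to the case $\chi\geq \omega$. I then fix $\varepsilon>0$ small and set $v_t:=\varphi_{t+\varepsilon}+C(\varepsilon)$ with $C(\varepsilon)$ the constant from Lemma \ref{lem: initial inequ}. Then $v_0\geq \varphi_0$, and $v_t$ is smooth and strictly $\theta_{t+\varepsilon}$-plurisubharmonic on $X\setminus D_\varepsilon$, where $D_\varepsilon$ denotes the common analytic singular set of the maximal and the weak flow for $s\geq \varepsilon$ (the two coincide up to an analytic subset by the same-singularities hypothesis). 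Since $\chi\geq 0$, $v_t$ is a (singular) supersolution of \eqref{eq: parabolic 2}:
$$\dot v_t=\log\left[\frac{(\theta_{t+\varepsilon}+dd^c v_t)^n}{\omega^n}\right]\geq \log\left[\frac{(\theta_t+dd^c v_t)^n}{\omega^n}\right].$$

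Let $\varphi^{\max,j}_t$ denote the smooth Guedj--Zeriahi approximants of $\varphi^{\max}_t$ starting from $\varphi_{0,j}\searrow \varphi_0$, and pick $\phi\in\psh(X,\omega)$, $\phi\leq 0$, smooth outside $D_\varepsilon$ and with Lelong numbers on $D_\varepsilon$ large enough, relative to a fixed small $\delta_0>0$, to dominate those of $\varphi_\varepsilon$; such a $\phi$ is provided by Theorem \ref{thm: Demailly equisingular}. For $\delta\in(0,\delta_0)$ put
$$G(t,x):=\varphi^{\max,j}_t(x)-v_t(x)+\delta\phi(x).$$
Then $G$ is upper semicontinuous on $[0,T]\times X$ with $G\equiv-\infty$ on $D_\varepsilon$, so its maximum is attained in $[0,T]\times(X\setminus D_\varepsilon)$. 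At any interior maximum $(t_0,x_0)$ with $t_0>0$, the condition $dd^c G\leq 0$ combined with $\chi\geq \omega$ and $-\delta\, dd^c\phi\leq \delta\omega$ yields
$$\theta_t+dd^c\varphi^{\max,j}_t\leq \theta_{t+\varepsilon}+dd^cv_t-(\varepsilon-\delta)\omega,$$
which forces $\partial_t G<0$ and contradicts the maximum condition. Hence $G$ achieves its maximum at $t_0=0$, which gives
$$\varphi^{\max,j}_t(x)-v_t(x)+\delta\phi(x)\leq \sup_X\bigl(\varphi_{0,j}-\varphi_\varepsilon-C(\varepsilon)+\delta\phi\bigr).$$

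To conclude I will let $j\to\infty$, then $\delta\to 0$, then $\varepsilon\to 0$. On the compact manifold $X$ the right-hand side is the supremum of a decreasing sequence of upper semicontinuous functions (extended by $-\infty$ on $D_\varepsilon\cup\{\phi=-\infty\}$), so by a standard Dini-type argument it converges to the supremum of the pointwise limit $\varphi_0-\varphi_\varepsilon-C(\varepsilon)+\delta\phi$. Lemma \ref{lem: initial inequ} together with the normalization $\varphi_0\leq 0$ yields $\varphi_0-\varphi_\varepsilon\leq C(\varepsilon)$, and since $\delta\phi\leq 0$ this limiting supremum is $\leq 0$. Therefore $\varphi^{\max}_t(x)\leq \varphi_{t+\varepsilon}(x)+C(\varepsilon)-\delta\phi(x)$ on $X\setminus D_\varepsilon$; sending $\delta\to 0$ pointwise on $\{\phi>-\infty\}$ followed by $\varepsilon\to 0$ (using pointwise smoothness of the weak flow) yields $\varphi^{\max}_t\leq \varphi_t$ off a pluripolar set, and since both sides are $\theta_t$-plurisubharmonic, equality almost everywhere forces equality everywhere. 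The most delicate point is the control of the penalty function $\phi$: its Lelong numbers must be strong enough, relative to $\delta$, to make $G\to-\infty$ near $D_\varepsilon$, so the limit $\delta\to 0$ must be carried out along a sequence of penalty functions with increasing singularities, the "same singularities" hypothesis being what allows a single analytic set $D_\varepsilon$ to absorb the singularities of both $\varphi^{\max}_t$ and $\varphi_t$ simultaneously.
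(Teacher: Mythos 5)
Your skeleton (shift the weak flow by $\vep$ via Lemma \ref{lem: initial inequ}, reduce to $\chi\geq 0$ by the change of variables of Theorem \ref{thm: comparison zero}, run a penalized maximum principle off an analytic set, then let $\delta,\vep\to 0$) is the right one, but there is a genuine gap at the heart of your maximum-principle step, caused by choosing to compare the \emph{smooth approximants} $\f^{\max,j}_t$ of the maximal flow with the \emph{singular} shifted weak flow $v_t=\f_{t+\vep}+C(\vep)$. Since $\f^{\max,j}_t$ is bounded while $v_t\to-\infty$ on $D_\vep$, your function $G=\f^{\max,j}_t-v_t+\delta\phi$ tends to $+\infty$ near $D_\vep$ unless $\delta\phi$ is \emph{more singular} than $\f_{t+\vep}$ there (you need this both to force the maximum into $X\setminus D_\vep$ and to make the right-hand side in your Dini argument upper semicontinuous). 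But your maximum-principle computation simultaneously requires $-\delta\,dd^c\phi\leq\delta\omega$, i.e.\ $\phi\in\psh(X,\omega)$, and Lelong numbers of $\omega$-psh functions are uniformly bounded by a constant depending only on the class $\{\omega\}$; so the requirement $\delta\,\nu(\phi,x)\geq\nu(\f_\vep,x)>0$ on $D_\vep$ becomes impossible as $\delta\to 0$. Relaxing to $\phi\in\psh(X,A\omega)$ does not help: the curvature constraint forces $\delta A\lesssim\vep$, hence $\delta\,\nu(\phi,\cdot)=O(\vep)$, which is below the (essentially $\vep$-independent) Lelong numbers of $\f_\vep$ for small $\vep$. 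Your closing remark that ``the limit $\delta\to0$ must be carried out along penalty functions with increasing singularities'' is precisely the step that cannot be implemented, and fixing $\delta$ instead leaves an unremovable error term $-\delta\phi$ in the final inequality. Note also that if this scheme did work it would prove maximality of an arbitrary weak solution, since the same-singularities hypothesis enters your argument only through Lemma \ref{lem: initial inequ}; the theorem is not expected to hold in that generality.

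The paper's proof sidesteps this by comparing the maximal solution $\p_t$ \emph{itself} (not its approximants) with $u_t=\f_{t+\vep}+C(\vep)$: the same-singularities hypothesis makes $\p_t-u_t$ uniformly bounded on $X_\vep$, so the penalty only needs to satisfy $\phi\to-\infty$ on $Y_\vep$ at an arbitrary rate, any fixed negative $\omega$-psh $\phi$ smooth on $X_\vep$ works for every $\delta$, the maximum is forced to $t_0=0$ exactly as in your computation, and both limits $\delta\to0$ and $\vep\to0$ are then harmless. If you replace your comparison pair by this one (and keep the rest of your argument), the proof goes through.
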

\begin{proof}
It costs no generality to assume that $\f_t\leq 0$ for small $t$. We first  treat the case when $\chi\geq 0$. 
Let $(\p_t)$ be the maximal solution of the equation (\ref{eq: parabolic 2}) starting from $\f_0$. We prove that $\f_t\equiv\p_t$. Since $\p_t$ is maximal, thanks to Lemma \ref{lem: actually maximal} it suffices to prove that $\p_t\leq \f_t$. 

Fix $T\in (0,T_{\max})$, $\vep>0$ and consider the following function
$$
u_t:= \f_{t+\vep} + C(\vep), \ t\in [0,T-\vep], \ x\in X_{\vep},
$$
where $C(\vep)$ and $X_{\vep}$ are defined as in Lemma \ref{lem: initial inequ}. We remark that by this construction and by Lemma \ref{lem: initial inequ} it holds that 
$$
u_0=\f_{\vep}+C(\vep)\geq \f_0. 
$$ 
Fix a small positive constant $\delta<\vep$ such that 
$\delta \omega <\vep \chi$. 
Consider now the following function 
$$
G(t,x):=\p_t(x) - u_t(x) + \delta \phi , \ \ t\in [0,T-\vep], \ x\in X_{\vep},
$$
where $\phi$ is also defined in Lemma \ref{lem: initial inequ}. Since $(\f_t)$ and $(\p_t)$ have the same singularity, meaning that the difference is uniformly bounded in $t$, one can apply the arguments in Lemma \ref{lem: initial inequ} to see that 
$G$ attains a maximum at some point $(t_0,x_0)$. Again, $t_0$ is forced to be $0$ by the maximum principle. Indeed, if $t_0$ were not $0$, by the maximum principle we would have, at $(t_0,x_0)$,
$$
0\leq \frac{\partial}{\partial t} G = \log \left[\frac{(\theta_t +dd^c \p_t)^n}{(\theta_t + \vep\chi +dd^c u_t)^n} \right] < \log \left[\frac{(\theta_t +\vep\chi + \delta dd^c \phi + dd^c \p_t )^n}{(\theta_t + \vep\chi +dd^c u_t)^n} \right] \leq 0,
$$
which is impossible. 

\medskip

\noindent{\bf Change of variables:} In the previous step we have assumed that $\chi\geq 0$. To remove this assumption we perform a change of variables exactly as in the case of zero Lelong numbers. 
It is harmless to assume that $\chi':=\omega +\chi > 0$. Now, let $(\f_t)$ be a weak solution and consider
$$
u_t:=e^{t}\f_{1-e^{-t}} , t\in [0,+\infty). 
$$
We compute 
\begin{equation}
\label{eq: KRF change variables}
\dot{u_t}= u_t -nt + \log \left[\frac{\omega +(e^t-1)\chi' +  dd^c u_t)^n}{\omega^n}\right].
\end{equation}
Thus, any weak (maximal) solution of the equation (\ref{eq: parabolic 2}) is in one-to-one correspondence with a weak (maximal) solution of the equation (\ref{eq: KRF change variables}) by a change of variables. 
Now, we combine the  previous step with the proof of Theorem \ref{thm: comparison zero} to conclude. 
\end{proof}

\subsection{Proof of Theorem B and Theorem B'}
Theorem B follows from Theorem \ref{thm: comparison zero} and Theorem \ref{thm: stability zero}. Theorem B'  follows from Theorem \ref{thm: uniqueness pos}.

\section{Convergence at time zero}
\label{sect: convergence at zero}
In this section we investigate the convergence at zero of weak solutions of the K\"ahler-Ricci flow. Let $\f_0$ be a $\omega$-plurisubharmonic function on $X$ and assume that 
$$
\frac{1}{2c(\f_0)}<T_{\max}.
$$ 
This condition insures that the maximal flow constructed in \cite{GZ13} is well-defined.
 
Let $(\f_t)$ be the maximal solution of the equation (\ref{eq: parabolic 2}) starting from $\f_0$.  If $\f_0$ is continuous  Song-Tian \cite{ST09} proved that $\f_t$ converges uniformly to $\f_0$. When $\f_0$ has finite energy, it was shown in \cite{GZ13} that $\f_t$ converges in energy while the convergence in capacity always holds. If $\f_0$ is continuous in some open subset $U\subset X$ we prove in the following that the convergence is locally uniform in $U$. This generalizes the uniform convergence of \cite{ST09} when $U=X$. 
\begin{thm}
\label{thm: uniform convergence at zero}
Assume that $\f_0$ is continuous  in an open subset $U$ of $X$. Then $\f_t$ converges in $L^{\infty}_{\rm loc}(U)$  to $\f_0$.  
\end{thm}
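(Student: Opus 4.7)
The plan is to establish $\f_t\to \f_0$ in $L^{\infty}_{\mathrm{loc}}(U)$ by proving two one-sided bounds, $\limsup_{t\to 0}\f_t\leq \f_0$ and $\liminf_{t\to 0}\f_t\geq \f_0$, each locally uniformly on $U$.

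For the upper bound I would use Demailly's decreasing approximation $\f_0^\delta\searrow \f_0$ by smooth $(1+\vep_\delta)\omega$-plurisubharmonic functions with $\vep_\delta\searrow 0$. Since $\f_0$ is continuous on $U$, Dini's theorem yields uniform convergence $\f_0^\delta\to \f_0$ on every compact $K\Subset U$. By Song-Tian \cite{ST09}, the smooth flow $\f_{t,\delta}$ starting from $\f_0^\delta$ converges uniformly to $\f_0^\delta$ on $X$ as $t\to 0$, while the construction of the maximal solution recalled in Section \ref{construction} together with Remark \ref{rem: comparison maximal} gives $\f_t\leq \f_{t,\delta}$. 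A standard diagonal $\vep/2$-argument---first fixing $\delta$ small so that $\f_0^\delta-\f_0<\vep/2$ on $K$, then $t_0$ so that $\f_{t,\delta}-\f_0^\delta<\vep/2$ on $X$ for $t<t_0$---then yields $\f_t\leq \f_0+\vep$ on $K$.

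For the lower bound, continuity of $\f_0$ on $U$ forces the Lelong numbers to vanish throughout $U$, so by Theorem A the flow $\f_t$ is smooth on $U$ for every $t>0$. The key step is to construct, for each compact $K\Subset U$ and each $\vep>0$, a continuous $\omega$-plurisubharmonic function $v$ on $X$ satisfying $v\leq \f_0$ globally and $v\geq \f_0-\vep$ on $K$. Given such a $v$, its flow $v_t$ is uniquely defined by Theorem B (since $v$ is bounded, hence has zero Lelong numbers), converges uniformly to $v$ on $X$ by Song-Tian, and is dominated by $\f_t$ by comparison of maximal solutions (Remark \ref{rem: comparison maximal}); combining these gives $\f_t\geq \f_0-2\vep$ on $K$ for small $t$. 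To construct $v$, I would regularize $\f_0$ by convolution in a coordinate chart covering $K$, obtaining a smooth approximation $g_\delta$ close to $\f_0$ on $K$; shift it down slightly so it lies below $\f_0$ on a small ball $B$ around $K$; subtract a quadratic bump $A|z-z_0|^2$, chosen so that the resulting function $h_\delta$ drops below a prescribed constant $-C$ on $\partial B$; and finally glue $h_\delta$ to the constant $-C$ outside $B$ via the standard $\max$ construction to obtain a continuous (quasi-)plurisubharmonic function on all of $X$.

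The main obstacle will be precisely this construction of the lower barrier $v$. Unlike Demailly's upper regularization, there is no general procedure for approximating a quasi-plurisubharmonic function from below by smooth $\omega$-plurisubharmonic functions. Controlling the $(1+\vep)$-error factor introduced by convolution in charts, while arranging the $\max$-gluing so that $v$ remains globally $\omega$-plurisubharmonic and yet stays uniformly close to $\f_0$ on $K$, is the technical heart of the argument.
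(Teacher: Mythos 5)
Your upper bound is essentially sound and close in spirit to what one needs: you can even take the smooth decreasing approximants $\f_{0,j}$ that define the maximal flow (these may be chosen $\omega$-psh, so no $(1+\vep_\delta)$-loss is necessary), note that $\f_{0,j}\to\f_0$ uniformly on each $K\Subset U$ by Dini, that $\f_{t,j}\to\f_{0,j}$ uniformly as $t\to 0$ simply because the approximating flow is smooth up to $t=0$ (Song--Tian is not needed here), and that $\f_t\leq\f_{t,j}$ by maximality.

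The genuine gap is in the lower bound. A continuous (or bounded) $\omega$-psh function $v$ on $X$ with $v\leq\f_0$ \emph{everywhere} cannot exist unless $\f_0$ is bounded below: at any point where $\f_0=-\infty$ --- and the theorem allows $\f_0$ to be arbitrarily singular, even with positive Lelong numbers, outside $U$ --- the inequality $v\leq\f_0$ forces $v=-\infty$. Your construction hits exactly this wall: after gluing with the constant $-C$ you obtain $v\equiv -C$ outside $B$, and in general no constant satisfies $-C\leq\f_0$ on $X\setminus B$; hence the hypothesis $v\leq\f_0$ of the comparison principle (Remark \ref{rem: comparison maximal}) fails and $v_t\leq\f_t$ does not follow (knowing $v\leq\f_0$ only on $B$ is useless, since comparison for the flow is global). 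So the step you call the ``technical heart'' is not merely technical; it is impossible as stated. The fix is that no barrier is needed: the lower bound is the easy direction and holds uniformly on all of $X$ by \cite[Lemma 2.9]{GZ13} (used repeatedly in this paper), which gives $\f_t\geq\f_0+nt(\log t-C)$, i.e. $\f_t\geq\f_0-c(t)$ with $c(t)\searrow 0$, first for the approximating flows and then for the maximal flow. The paper in fact extracts \emph{both} directions from this single estimate: from $\f_t\geq\f_s+n(t-s)\left[\log(t-s)-C\right]$ for $0\leq s\leq t\leq t_0$, the reparametrized family $u_t=\f_t+nCt-nt\log t+nt\log 4$ satisfies $u_t\geq u_{t/2}$, so $u_{t_02^{-j}}$ decreases to $\f_0$ and Dini yields the locally uniform upper bound on $U$; your Demailly-plus-Dini argument is a correct alternative for that half only.
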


\begin{proof}
It costs no generality to assume that $\f_t\leq 0$ for small $t>0$. It follows from \cite[Lemma 2.9]{GZ13} that there exist a uniform constant $C$ and a small time $t_0$ such that 
$$
\f_t \geq \f_s + n(t-s) \left[\log (t-s) -C\right], \ \forall 0\leq s \leq t\leq t_0. 
$$
Set $\p_t:= \f_t + nCt - n t \log t$. Using the convexity of the function $x \log x$ we obtain
$$
\p_t \geq \p_s - nt\log 2.
$$
Now, set $u_t=\p_t+ nt\log 4$ and observe that 
$$
u_t\geq u_{t/2}.
$$
Thus for each $t>0$ the sequence $u_{t_02^{-j}}$ decreasingly converges to $u_0=\f_0$ as $j$ goes to $+\infty$. By Dini's theorem the convergence is locally uniform in $U$. 
\end{proof}

When the initial current has analytic singularities we obtain the following expected convergence in its regular locus:

\begin{thm}
\label{thm: cvg zero exp}
Assume that $e^{\gamma\f_0}$ is smooth on $X$ for some positive constant $\gamma$ and $\f_0$ is strictly $\omega$-psh in $X$. Then the maximal solution $\f_t$ of the equation (\ref{eq: parabolic 2}) converges in $\Cc^{\infty}_{\rm loc}(\Omega)$ to $\f_0$ as $t$ goes to $0$. Here,  $\Omega:=\{\f_0>-\infty\}$.  
\end{thm}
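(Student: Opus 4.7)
The plan is to construct a smooth decreasing approximation $\f_{0,j}\searrow\f_0$ tailored to the hypothesis $e^{\gamma\f_0}\in\Cc^\infty(X)$ and then apply the a priori estimates of Section \ref{sec: Estimates} uniformly in $j$. Since $\f_0$ is strictly $\omega$-psh, that is $\omega+dd^c\f_0\geq \delta_0\omega$ for some $\delta_0\in(0,1)$, I would set
\[
\f_{0,j}:=\gamma^{-1}\log\bigl(e^{\gamma\f_0}+1/j\bigr).
\]
This is smooth on $X$, decreases pointwise to $\f_0$, and converges to $\f_0$ in $\Cc^\infty_{\rm loc}(\Omega)$ by construction. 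A direct computation expressing $dd^c\f_{0,j}$ in terms of $dd^c\f_0$ and $d(e^{\gamma\f_0})\wedge d^c(e^{\gamma\f_0})$ yields both the uniform strict $\omega$-plurisubharmonicity $\omega+dd^c\f_{0,j}\geq \delta_0\omega$ and the uniform bound $\Delta_\omega \f_{0,j}\leq C e^{-\gamma\f_0}$ with $C$ independent of $j$. The first bound in particular gives $\dot{\f}_{0,j}\geq n\log\delta_0$ uniformly.

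Let $\f_{t,j}$ be the classical smooth flow on $[0,T_{\max})$ starting from $\f_{0,j}$. I would apply Theorem \ref{thm: C2 est more} and Proposition \ref{prop: est dot more} to $\f_{t,j}$ with barrier functions $\phi_1=\phi_2=\f_0$, appropriately normalized; these are $\omega$-psh but singular on $X\setminus\Omega$. Although the proofs are written for smooth barriers, the maximum principle arguments extend without change when $\phi_1,\phi_2$ have the same singularities as $\f_0$, because the auxiliary quantities $H$ used there diverge to $\pm\infty$ on $\{\f_0=-\infty\}$: in the upper-bound argument $H=\log \tr_\omega\omega_{t,j}+C_1\phi_1-A(\f_{t,j}-\delta\phi_2)$ tends to $-\infty$ on the polar set of $\f_0$, so its maximum lies in $[0,T]\times\Omega$ where the usual pointwise computation is valid, and symmetrically for the lower-bound proof. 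The resulting estimates read, for all $t\in[0,T]$ with $T<T_{\max}$,
\[
\tr_\omega\omega_{t,j}\leq C\bigl(1+e^{-A\f_0}\bigr),\qquad \dot{\f}_{t,j}\geq -C'\bigl(1+|\f_0|\bigr),
\]
uniformly in $j$, for some positive constants $A,C,C'$ depending only on $T$, $\delta_0$, $\gamma$ and our fixed data.

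On any compact $K\Subset\Omega$, the function $\f_0$ is bounded and smooth, so the two displayed estimates give uniform $\Cc^0$ control on $\f_{t,j}$, $\Delta_\omega\f_{t,j}$ and $\dot{\f}_{t,j}$ over $K\times[0,T]$. The complex parabolic Evans--Krylov theorem together with Schauder's estimate, applied exactly as in the proof of Theorem A, then yields uniform bounds on $\f_{t,j}$ in $\Cc^k(K\times[0,T])$ for every $k$. Arzel\`a--Ascoli, combined with the monotone convergence $\f_{t,j}\searrow\f_t$ furnished by Lemma \ref{lem: independent}, gives $\f_{t,j}\to\f_t$ in $\Cc^\infty(K\times[0,T])$ for every compact $K\Subset\Omega$. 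Since $\f_{0,j}\to\f_0$ in $\Cc^\infty_{\rm loc}(\Omega)$ by construction, the time-zero limit is $\f_0$, establishing the claimed $\Cc^\infty_{\rm loc}(\Omega)$ convergence as $t\to 0$.

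The main obstacle is justifying that the a priori estimates of Section \ref{sec: Estimates} survive uniformly down to $t=0$ with singular barriers $\phi_1=\phi_2=\f_0$, which requires observing that the extrema of the relevant $H$ are forced into the smooth locus $\Omega$ by the $\pm\infty$ behavior of the barriers on $\{\f_0=-\infty\}$; once this point is granted, the bootstrap from $\Cc^0$ to $\Cc^\infty$ through Evans--Krylov and Schauder is entirely routine and formally identical to the argument used in the proof of Theorem A.
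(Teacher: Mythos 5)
Your proposal is correct and follows essentially the same route as the paper: the same approximants $\f_{0,j}=\gamma^{-1}\log\left(e^{\gamma\f_0}+1/j\right)$, the same uniform bounds $\Delta_\omega\f_{0,j}\leq C e^{-\gamma\f_0}$ and $\dot{\f}_{0,j}\geq n\log\delta_0$ coming from the strict $\omega$-plurisubharmonicity of $\f_0$, the same application of Proposition \ref{prop: est dot more} and Theorem \ref{thm: C2 est more} with barriers comparable to $\f_0$, and the same Evans--Krylov/Schauder bootstrap on compact subsets of $\Omega$. Your explicit remark that the maximum-principle proofs of those estimates tolerate barriers that are singular on $\{\f_0=-\infty\}$ (since the auxiliary function $H$ is pushed to $\pm\infty$ there) is exactly the point the paper relies on implicitly, so there is no substantive difference between the two arguments.
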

\begin{proof}
First observe that, since $e^{\gamma\f_0}$ is smooth on $X$, $\Omega=\{\f_0>-\infty\}$ is an open set. 
Thanks to Lemma \ref{lem: independent} the maximal solution does not depend on the choice of the approximating sequence. Thus we can take
$$
\f_{0,j}:=\frac{1}{\gamma}\log \left( e^{\gamma\f_0}+\frac{1}{j} \right)
$$ 
as the smooth sequence decreasing to $\f_0$. To simplify the notation we can assume that $\gamma=1$. 
For each $j$, let $\f_{t,j}$ be the smooth solution of (\ref{eq: parabolic 2}) starting from $\f_{0,j}$. 
 Denote by $\omega_t:=\theta_t+dd^c\f_{t,j}$. We want to prove that there exists a positive constant $C$ such that $$\frac{1}{C} \omega\leq \omega_t\leq C \omega $$ on each compact subset $K\subset \Omega$. The result will then follow from \cite[Theorem 3.2.17]{BEG13}. We start proving the right-hand side inequality. 
 Since, in $K$, $\omega_t$ and $\omega$ are both strictly positive forms, it suffices to prove that $\tr_\omega {(\omega_t)}\leq C$ in
  $K\subset \Omega$. Note that on each compact $K\subset \Omega$, $\f_0$ is a $\delta\omega $-psh function for some $\delta\in (0,1)$. Furthermore, $\f_0$ is smooth on $\Omega$ 
  and by construction we have $\f_{0,j}\geq \f_0 $. Moreover 
$$\Delta \f_{0,j} \leq B e^{-\f_0},$$
for some uniform constant $B>0$.
Indeed by a simple computation we get
$$
dd^c e^{\f_{0,j}}=e^{\f_{0,j}}dd^c\f_{0,j}+e^{\f_{0,j}} d\f_{0,j} \wedge d^c \f_{0,j} \leq B\omega
$$
where the last inequality follows from the fact that $ e^{\f_{0}}$ is smooth on $X$.
We now apply Theorem \ref{thm: C2 est more} to get the uniform estimate on $\tr_\omega (\omega_t)$. 
It remains to prove that $C^{-1} \omega\leq \omega_t $ on each $K\subset \Omega$. By construction and from the fact that $\f_0$ is strictly $\omega$-psh we get
$$
c \omega^n < (\omega+dd^c \f_{0,j})^n = e^{\dot{\f}_{0,j}} \omega^n
$$
where $c$ is a uniform positive constant. This easily implies $\dot{\f}_{0,j}> \log c$. Applying Proposition \ref{prop: est dot more}, we infer that there exists a uniform cosntant $C_1>0$ such that 
$$
\dot{\f}_{t,j}\geq C_1(\f_0+1).
$$ 
By construction 
$\omega_t^n= e^{\dot{\f}_{t,j}}\omega^n$ and so, thanks to the above estimate, we infer that there exists a uniform constant $C_2=C(K)$ such that $\omega_t^n \geq {C_2^{-1}} \omega^n$ on each $K \subset \Omega$. Let now $\lambda_1,\cdots, \lambda_n$ be the eigenvalues of the matrix associated to $\omega_t$ with respect to $\omega$. Then the latter inequality means that $\lambda_1\cdots\lambda_n \geq \frac{1}{C_2}$. This combined with the previous estimate (i.e. $\max_i \lambda_i \leq C$) gives that $\min_i \lambda_i \geq \frac{1}{C_3}$ for some uniform constant $C_3$. Hence the conclusion.
\end{proof}

The same arguments as above can also be applied in other contexts. Assume that  $0<f\in L^1(X)$ is smooth in the complement of some closed subset $D\subset X$. In \cite{DL14a} we say that $f$ satisfies the condition $\mathcal{H}_{f}$ if 
$$
f=e^{\p^+-\p^-}, \ \p^{\pm} \in \psh(X, C\omega), \ \p^-\in L^{\infty}_{\rm loc}(X\setminus D), 
$$
for some positive constant $C$. 
Assume also that $f$ satisfies the compatibility condition $\int_X f\omega^n=\int_X \omega^n$, the latter will be normalized to be $1$. 
Let $\f_0\in \Ec(X,\omega)$ be the unique normalized solution of the complex Monge-Amp\`ere equation
$$
(\omega+dd^c \f_0)^n = f\omega^n. 
$$
We are going to prove the following convergence result: 
\begin{thm}
\label{thm: convergence at zero smooth case}
Assume the above setting and let $(\f_t)$ be the unique weak solution of the equation (\ref{eq: parabolic 2}) with initial data $\f_0$. Then  $\f_t$ converges to $\f_0$ in $\Cc^{\infty}_{\rm loc}(X\setminus D)$. 
\end{thm}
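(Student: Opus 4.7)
The argument will parallel the proof of Theorem \ref{thm: cvg zero exp}, with the condition $\mathcal{H}_f$ providing the pluripotential input in place of the explicit smoothness of $e^{\gamma\f_0}$. Since $\f_0\in\EcX$ has zero Lelong numbers, $c(\f_0)=+\infty$, so the stability part of Theorem B applies: for any sequence $\f_{0,j}\to\f_0$ in $L^1(X)$ of smooth strictly $\omega$-psh approximants, the corresponding smooth flows $\f_{t,j}$ converge to the unique weak flow $\f_t$ in $\Cc^{\infty}(X)$ for each fixed $t>0$. The theorem therefore reduces to (i) the smoothness of $\f_0$ on $X\setminus D$ (which follows from $\mathcal{H}_f$ via the $\Cc^0$ estimates of \cite{DL14a} combined with Evans-Krylov and Schauder theory), and (ii) uniform-in-$j$ parabolic estimates for $\f_{t,j}$ on $[0,T]\times K$ for every compact $K\subset X\setminus D$ and every $T<T_{\max}$.

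To obtain (ii), I would choose the approximants via the Monge-Amp\`ere equation itself. Take decreasing sequences $\p^{\pm}_j$ of smooth $C\omega$-psh functions converging to $\p^{\pm}$ (by Demailly regularization), set $f_j:=e^{\p^+_j-\p^-_j}$ suitably normalized so that $\int_X f_j\,\omega^n=1$, and let $\f_{0,j}$ be the unique normalized smooth solution of $(\omega+dd^c\f_{0,j})^n=f_j\,\omega^n$. Stability of the Monge-Amp\`ere operator gives $\f_{0,j}\to\f_0$ in $L^1(X)$. The crucial observation is that $\dot\f_{0,j}=\log f_j$ by construction, and the $\mathcal{H}_f$ hypothesis allows one to fix, uniformly in $j$, smooth $\omega$-psh functions $\phi_1,\phi_2$ and $\delta\in(0,1)$ such that $\log f_j\geq C_1\phi_1$, $\f_{0,j}\geq\delta\phi_2-C$, and a Yau-type $\Cc^2$ estimate (with auxiliary function tuned to $\p^{\pm}_j$) yields the Laplacian bound $\Delta_{\omega}\f_{0,j}\leq e^{-C_1\phi_1}$.

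These are exactly the hypotheses of Proposition \ref{prop: est dot more} and Theorem \ref{thm: C2 est more}, which then provide, uniformly in $j$ and $t\in[0,T]$, a lower bound on $\dot\f_{t,j}$ and an upper bound on $\tr_{\omega}\omega_{t,j}$. On any compact $K\subset X\setminus D$ both bounds are finite, so $C^{-1}\omega\leq\omega_{t,j}\leq C\omega$ uniformly on $[0,T]\times K$. The parabolic complex Evans-Krylov theory together with Schauder estimates (as in \cite[Theorem 3.2.17]{BEG13}) then yield uniform $\Cc^k$ bounds of all orders, so $\f_{t,j}\to\f_t$ in $\Cc^{\infty}_{\rm loc}([0,T]\times(X\setminus D))$. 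Letting $t\searrow 0$ inside this smooth convergence gives $\f_t\to\f_0$ in $\Cc^{\infty}_{\rm loc}(X\setminus D)$, which is the desired conclusion.

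\textbf{Main obstacle.} The delicate step is the uniform Laplacian bound $\Delta_{\omega}\f_{0,j}\leq e^{-C_1\phi_1}$ for solutions of the Monge-Amp\`ere equation whose right-hand sides $f_j$ degenerate as $j\to\infty$: Yau's second-order estimate must be run with an auxiliary function adapted to $\p^{\pm}_j$, and the constants must be shown to be independent of $j$. The $\mathcal{H}_f$ hypothesis is used precisely to produce an $\omega$-psh majorant of $-\log f$ with sufficiently mild singularities along $D$. Once this initial-data estimate is secured, the remainder of the argument follows the Theorem \ref{thm: cvg zero exp} template essentially verbatim.
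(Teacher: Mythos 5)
Your proposal follows essentially the same route as the paper: approximate $\p^{\pm}$ by Demailly regularization, solve $(\omega+dd^c\f_{0,j})^n=c_j e^{\p^+_j-\p^-_j}\omega^n$ by Yau, invoke the stability theorem (Theorem \ref{thm: stability zero}, valid since $c(\f_0)=+\infty$), and feed the initial-data bounds into Proposition \ref{prop: est dot more} and Theorem \ref{thm: C2 est more} before concluding with Evans--Krylov and Schauder as in \cite[Theorem 3.2.17]{BEG13}. The only difference is that the uniform estimates $\f_{0,j}\geq \vep\p^-_j-C$ and $\Delta_\omega\f_{0,j}\leq e^{-C_2\p^-_j}$, which you flag as the main obstacle and propose to rederive via a Yau-type second-order argument, are exactly what the paper imports ready-made from \cite{DL14a}, so no gap remains.
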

\begin{proof}
We approximate $\p^{\pm}$ by using Demailly's technique (see \cite{Dem94}). Let $\p^{\pm}_{j}$ denote  these approximants. We then use Yau's work \cite{Yau} to find $\f_{0,j}\in \psh(X,\omega)\cap \Cc^{\infty}(X)$ such that 
$$
(\omega +dd^c \f_{0,j})^n =c_je^{\p^{+}_j-\p^{-}_j}\omega^n,
$$
where $c_j$ are normalization constants. It was proved in \cite{DL14a} that $c_j$ converges to $1$ as $j$ goes to $+\infty$ and the following estimates hold:
\begin{itemize}
\item For each $\vep>0$ there exists $C_1=C(\vep)>0$ such that
\begin{equation*}
\f_{0,j}\geq \vep \p^-_j  - C_1 .
\end{equation*}
\item There exists a uniform constant $C_2>0$ such that 
\begin{equation*}
\Delta_{\omega}(\f_{0,j}) \leq e^{-C_2\p^{-}_j}.
\end{equation*}

\end{itemize}

Now, for each $j$ let $(\f_{t,j})$ be the unique smooth solution of the parabolic equation (\ref{eq: parabolic 2}) with initial data $\f_{0,j}$. It follows from Theorem \ref{thm: stability zero} that $\f_{t,j}$ converges to $\f_t$ the unique weak solution of (\ref{eq: parabolic 2}).  Using Theorem \ref{thm: C2 est more} we get a uniform estimate for $\tr_{\omega}(\omega_{t,j})$ on each compact subset $K\Subset X\setminus D$. This together with Proposition \ref{prop: est dot more} and \cite[Theorem 3.2.17]{BEG13} yield locally uniform bounds on all derivatives of $\f_t$. This explains the convergence in $\Cc^{\infty}_{\rm loc}(X\setminus D)$ of $\f_t$ to $\f_0$ and the result follows. 
\end{proof}

\end{document}